\newtheorem{theoremA}{Theorem}
\newtheorem{thm}{Theorem}[subsection]
\newtheorem{theorem}[thm]{Theorem}
\newtheorem{corollary}[thm]{Corollary}
\newtheorem{lemma}[thm]{Lemma}
\newtheorem{proposition}[thm]{Proposition}
\newtheorem{definition}[thm]{Definition}
\newtheorem{remarks}[thm]{Remarks}
\newtheorem{remark}[thm]{Remark}
\newtheorem{numbering}[thm]{}
\newtheorem{hypo}{Hypothesis}
\newcommand{\CaA}{\mathcal A}
\newcommand{\CaB}{\mathcal B}
\newcommand{\CaF}{\mathcal F}
\newcommand{\CaH}{\mathcal H}
\newcommand{\CaR}{\mathcal R}
\newcommand{\CaS}{\mathcal S}
\newcommand{\CaU}{\mathcal U}
\newcommand{\bbF}{\mathbb F}
\newcommand{\bbR}{\mathbb R}
\newcommand{\bbZ}{\mathbb Z}
\newcommand{\tbR}{\widetilde{\mathbb R}}
\newcommand{\chf}{\mathbbm{1}}
\newcommand{\sfX}{\mathsf X}
\newcommand{\val}{\nu}
\newcommand{\Ad}{\textup{Ad}}
\newcommand{\sms}{{ss}}
\newcommand{\as}{{\mathrm{a}}}
\newcommand{\tu}{{0^+}}
\newcommand{\bG}{\mathbb G}
\newcommand{\bU}{\mathbb U}
\newcommand{\bH}{\mathbb H}
\newcommand{\bC}{\mathbb C}
\newcommand{\bT}{\mathbb T}
\newcommand{\bM}{\mathbb M}
\newcommand{\bP}{\mathbb P}
\newcommand{\bN}{\mathbb N}
\newcommand{\bS}{\mathbb S}
\newcommand{\bX}{\sfX}
\newcommand{\stab}{\mathrm{Stab}}
\newcommand{\T}{T}
\newcommand{\Z}{Z}
\newcommand{\C}{C}
\newcommand{\G}{G}
\newcommand{\lieG}{\mathfrak g}
\newcommand{\lieM}{\mathfrak m}
\newcommand{\lieN}{\mathfrak n}
\newcommand{\lieT}{\mathfrak t}
\newcommand{\blieG}{\boldsymbol{\lieG}}
\newcommand{\dpl}{\mathsf d}
\newcommand{\dist}{\mathrm{dist}}
\newcommand{\Set}{S}
\newcommand{\bilinear}{\mathrm B}
\newcommand{\Bd}{\CaB}
\newcommand{\Apt}{\CaA}
\newcommand{\chamber}{C}
\newcommand{\chamberD}{D}
\newcommand{\Facet}{\CaF}
\newcommand{\oface}{\overline F^\ast}
\newcommand{\face}{F^\ast}
\newcommand{\dth}{d}
\newcommand{\rtm}{r}
\newcommand{\rg}{{\mathrm g}}
\newcommand{\atm}{a}
\newcommand{\btm}{b}
\newcommand{\supp}{\textup{Supp}}
\newcommand{\nil}{\mathcal N}
\newcommand{\Hypk}{\textup{(H$k$)}}
\newcommand{\HypB}{\textup{(HB)}}
\newcommand{\HypGT}{\textup{(HGT)}}
\newcommand{\HypN}{\textup{(H$\nil$)}}
\newcommand{\midvsp}{\vspace{7pt}}
\def\rig{\mathrm{rig}}
\newcommand{\BZ}{\mathbb {Z}}
\def\BR{\mathbb R}
\def\BG{\mathbb G}
\def\BN{\mathbb N}
\def\CO{\mathcal O}
\def\CH{\mathcal H}
\def\g{\gamma}
\def\a{\alpha}
\def\e{\epsilon}
\def\CS{\mathcal S}
\newcommand{\remind}[1]{{\bf ** #1 **}}
\begin{document}

\title{Jordan Decompositions of cocenters of reductive $p$-adic groups} 
 \author{Xuhua He and Ju-Lee Kim}
\address{Department of Mathematics, University of Maryland, College Park, MD 20742, USA} \email{xuhuahe@math.umd.edu}
\address{Department of Mathematics, Massachusetts Institute of Technology, 77 Massachusetts Avenue, Cambridge MA 02139, USA}
\email{juleekim@mit.edu}
\thanks{X. H. was partially supported by NSF DMS-1463852 and DMS-1128155 (from IAS)}
\subjclass[2010]{Primary 22E50, Secondary 11F70}

\begin{abstract}
Cocenters of Hecke algebras $\CaH$ play an important role in studying mod $\ell$ or $\mathbb C$ harmonic analysis on connected $p$-adic reductive groups. On the other hand, the depth $r$ Hecke algebra $\CaH_{r^+}$ is well suited to study depth $r$ smooth representations. In this paper, we study depth $r$ rigid cocenters $\overline\CaH^\rig_{r^+}$ of a connected reductive $p$-adic group over rings of characteristic zero or $\ell\neq p$. More precisely, under some mild hypotheses, we establish a Jordan decomposition of the depth $r$ rigid cocenter, hence find an explicit basis of $\overline\CaH^\rig_{r^+}$. 
\end{abstract}

\maketitle

\section*{\bf Introduction}

\subsection{} Let $G$ be a connected reductive $p$-adic group. Let $R$ be an algebraically closed field of characteristic not equal to $p$. Let $\CH_R$ be the Hecke algebra of locally constant, compactly supported $R$-valued functions on $G$. The trace map $$Tr_R: \overline \CH_R \to \mathfrak R_R(G)^*$$ relates the cocenter $\overline \CH_R=\CH_R/[\CH_R, \CH_R]$ and the Grothendieck group $\mathfrak R_R(G)$ of smooth admissible representations of $G$ over $R$. 

In most cases, the cocenter is expected to be ``dual'' to the representations. For $R=\mathbb C$, Bernstein, Deligne and Kazhdan in \cite{BDK} and \cite{Kaz} proved the trace map $Tr_{\mathbb C}: \overline \CH_{\mathbb C} \xrightarrow{\cong} \mathfrak R_{\mathbb C}(G)^*_{good}$ is a bijection between  the cocenter and the ``good linear forms'' on $\mathfrak R_{\mathbb C}(G)$. For modular representations over $R$, the surjection $Tr_{R}: \overline \CH_{R} \to \mathfrak R_{R}(G)^*_{good}$ is established in \cite{hecke-3} under the assumption that the cardinality of the relative Weyl group of $G$ is invertible in $R$. It is conjectured that the injection holds if the pro-$p$ order of any open compact subgroup of $G$ is invertible in $R$.

This motivates our study of the structure of the cocenter of the Hecke algebra. To be precise, we mainly consider the integral form $\CH=\CH(G)$, i.e. $\CaH_R(G)$ with $R=\mathbb Z[\frac 1p]$. This will allow us to apply the results on $\overline \CH$ to both the ordinary and the modular representations of $G$. 

\subsection{}\label{0.2} In \cite[Theorem B]{hecke-1} and \cite[Theorem C \& Theorem 6.5]{hecke-2}, the first named author showed that $$\overline \CH \cong \oplus_M \overline \CH(M)^{\rig, +},$$ where $M$ runs over all the standard Levi subgroups of $G$ and $\overline \CH(M)^{\rig, +}$ is the $+$-rigid part of the cocenter of the Hecke algebra $\CH(M)$, i.e. the $\BZ[\frac{1}{p}]$-submodule of $\overline \CH(M)$ consisting of elements represented by the functions supported in the compact-modulo-center elements of $M$ whose Newton points are dominant (in $G$) and  with centralizer equal to $M$. In other words, the rigid cocenters of the Hecke algebras of various standard Levi subgroups form the ``building block'' of the whole cocenter $\overline \CH$. We refer the details to \emph{loc. cit.}.

In this paper, we study the rigid cocenter $\overline \CH^{\rig}$, the $\BZ[\frac{1}{p}]$-submodule of $\overline \CH$ represented by functions supported in the subset $G^\rig$ of compact-modulo-center elements of $G$. More precisely, we focus on the depth $r$ rigid cocenter $\overline \CH_{r^+}^\rig$ for any real number $r>0$, defined as follows. 

For any element $x$ in the reduced Bruhat--Tits building $\Bd(G)$ of $G$, Moy and Prasad \cite{MP} associated a subgroup $G_{x, r^+}$ of $G$. Let $\CH_{r^+}=\sum_{x \in \Bd(G)} C_c(G/G_{x, r^+})$ and $\overline \CH_{r^+}$ be its image in $\overline \CH$, the \emph{depth $r$ cocenter}. The \emph{depth $r$ rigid cocenter} $\overline \CH^{\rig}_{r^+}=\overline \CH^\rig \cap \overline \CH_{r^+}$. According to Howe's conjecture, this is a finitely generated $\BZ[\frac{1}{p}]$-module. Moreover, we have that 
$\overline \CH^\rig=\varinjlim\limits_r \overline \CH^{\rig}_{r^+}$. 

\subsection{} The main purpose of this paper is to establish the ``Jordan decomposition'' of $\overline \CH^{\rig}_{r^+}$. 

Before stating the main result, we make a short digression and discuss a ``toy model'', the cocenter of the group algebra $\BZ[H]$ of a finite reductive group $H$. 

For any element $g \in H$, we have the Jordan decomposition $g=g_s g_u$, where $g_s$ is the semisimple part of $g$ and $g_u$ is the unipotent part of $g$. Then we have the Jordan decomposition of the group algebra $\BZ[H] \cong \oplus_{s \in H^\sms} \BZ[C_H(s)^{unip}]$, where $H^\sms$ is the set of semisimple elements of $H$, $C_H(s)$ is the centralizer of $s$ and $C_H(s)^{unip}$ is the set of unipotent elements in $C_H(s)$. Based on the Jordan decomposition on the group algebra $\BZ[H]$, one deduces the Jordan decomposition of the cocenter 
$$
\overline{\BZ[H]} 
:=\BZ[H]\left/[\BZ[H],\BZ[H]]\right.
\cong\oplus_{[s] \in Cl^\sms(H)} \overline{\BZ[C_H(s)^{unip}]},
$$ 
where $Cl^\sms(H)$ is the set of semisimple conjugacy classes of $H$ and $\overline{\BZ[C_H(s)^{unip}]}$ is the image of $\BZ[C_H(s)^{unip}]$ in $\overline{\BZ[C_H(s)]}$, which is a free $\BZ$-module with basis indexed by the unipotent conjugacy classes of $C_H(s)$. 

\subsection{} Now we come back to connected reductive $p$-adic groups. As any element in the Hecke algebra $\CH$ is a locally constant function, there seems no analogous Jordan decomposition on $\CH$. However, under the hypotheses in \S\ref{subsec: hypos}, we have the analogous part of semisimple conjugacy classes and unipotent conjugacy classes in the context of the cocenter of Hecke algebras. 

By the work of Adler and Spice \cite{AS}, we may write a semisimple compact-modulo-center element $\g$ as a ``good product''. Since we are working with the cocenter $\overline \CH^\rig_{r^+}$ of depth $r$, we use the truncated part $\g_{\le r}$ of $\g$. 
The equivalence classes $\CS_r$ of semisimple compact-modulo-center elements of $G$, roughly speaking, are generated by the conjugation action and the truncated operation (see \S\ref{sec: ss} for the precise definition). The set $\CS_r$ is the analogue of semisimple conjugacy classes and serves as the index set of the desired Jordan decomposition on $\overline \CH^\rig_{r^+}$. 

For any $[\g] \in \CS_r$, we pick up the truncation $\g_{\le r}$ (see Definition \ref{defn: good product} for details) of a representative $\g \in [\g]$ and denote by $C_G(\g_{\le r})$ the centralizer of $\g_{\le r}$. The isomorphism class of $C_G(\g_{\le r})$ is independent of the choice of $\g$ and its truncation $\g_{\le r}$.

Now we come to the unipotent part. Let $\overline \CH^{G,\flat}_{r^+}$ be the $\BZ[\frac{1}{p}]$-submodule of $\overline \CH^{\rig}_{r^+}$, represented by functions in $\sum_{x \in \Bd(G)} C_c(G_{x, r}/G_{x, r^+})$ with support in $G_{r^+}:=\cup_{x \in \Bd(G)} G_{x, r^+}$. Based on the work of DeBacker in \cite{De1} and \cite{De2}, $\overline \CH^{G,\flat}_{r^+}$ is a free module with basis indexed by the unipotent conjugacy classes of $G$. This is the analogy of the set of unipotent conjugacy classes, or in other words, the analogy of $\overline{\BZ[H^{unip}]}$ in the cocenter of the group algebra $\BZ[H]$. 
Now we state the main results of this paper. 

\begin{theoremA}[Theorem \ref{JD-H} \& Theorem \ref{JD-4}]\label{thmA} Fix $r\in\mathbb R_{>0}$.
Suppose Hypotheses in \S\ref{subsec: hypos} hold. Then
$$
\overline \CH^{\rig}_{r^+} \cong \oplus_{[\g] \in \CS_r} \overline \CH^{C_G(\g_{\le r}),\flat}_{r^+}.
$$ 
Moreover, $\overline \CH^{\rig}_{r^+}$ is a free $\BZ[\frac{1}{p}]$-module.  
\end{theoremA}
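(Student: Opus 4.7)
The plan is to imitate the finite-group model sketched in \S0.3. For each $[\g]\in\CS_r$ we build an ``induction-type'' map
$$
I_{[\g]}\colon \overline\CH^{C_G(\g_{\le r}),\flat}_{r^+}\longrightarrow \overline\CH^{\rig}_{r^+},
$$
and show that the total map $\bigoplus_{[\g]}I_{[\g]}$ is an isomorphism. The construction rests on the Adler--Spice good product: any semisimple rigid element $\g$ factors as $\g_{\le r}$ times an element lying in the topologically unipotent part of $C_G(\g_{\le r})_{r^+}$. A function $f$ representing an element of $\overline\CH^{C_G(\g_{\le r}),\flat}_{r^+}$ is supported in that topologically unipotent set; I push it forward by left multiplication by $\g_{\le r}$ into $G^{\rig}$, then average over $G$-conjugation to produce $I_{[\g]}(f)\in \CH^{\rig}_{r^+}$. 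Under the hypotheses of \S\ref{subsec: hypos}, changing the representative of $[\g]$ or the choice of good product of $\g$ modifies the result by a $G$-conjugation, so $I_{[\g]}$ is well-defined on the cocenter.

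For surjectivity, take $f\in\CH^{\rig}_{r^+}$. Since $f$ is bi-invariant under some $G_{x,r^+}$ and compactly supported modulo $G$-conjugation, its support meets only finitely many $\CS_r$-classes. Assigning to each $g\in \supp f$ the topological semisimple part $g_{\le r}$ of its good product gives a locally constant partition of $\supp f$ by $\CS_r$. A suitable partition of unity, refined to be bi-invariant under the relevant Moy--Prasad subgroup, decomposes $f=\sum_{[\g]}f_{[\g]}$ with $f_{[\g]}$ in the image of $I_{[\g]}$. The compatibility $G_{x,r^+}\cap C_G(\g_{\le r})=C_G(\g_{\le r})_{x,r^+}$ from Moy--Prasad theory guarantees that the depth-$r^+$ condition descends cleanly to the centraliser, so each $f_{[\g]}$ indeed lifts from $\overline\CH^{C_G(\g_{\le r}),\flat}_{r^+}$.

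The main obstacle is injectivity: one must verify that if $\sum_{[\g]}I_{[\g]}(f_{[\g]})\in [\CH,\CH]$, then each individual $f_{[\g]}$ already vanishes in its summand. My approach is to construct left-inverse ``descent'' maps by localising around $\g_{\le r}$: for each $[\g]$, apply a $G$-invariant cut-off supported in a tubular neighbourhood of the $\CS_r$-class of $\g$, small enough to be disjoint from the neighbourhoods of all other classes by the separation property of good products. This cut-off is translated back to $C_G(\g_{\le r})$ via a Harish-Chandra-style descent, which must be shown to carry $[\CH,\CH]\cap\CH_{r^+}^{\rig}$ into the commutator subspace of $\CH(C_G(\g_{\le r}))$ up to error terms that vanish by a Howe-type finiteness argument on $\overline\CH^{C_G(\g_{\le r}),\flat}_{r^+}$. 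Freeness of $\overline\CH^{\rig}_{r^+}$ as a $\BZ[\tfrac1p]$-module is then immediate from the direct-sum decomposition and the fact recalled in \S\ref{0.2} that each $\overline\CH^{C_G(\g_{\le r}),\flat}_{r^+}$ is free, with basis indexed by the unipotent conjugacy classes of $C_G(\g_{\le r})$.
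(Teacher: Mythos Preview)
Your proposal has the right overall architecture—maps from the centraliser pieces into $\overline\CH^{\rig}_{r^+}$, then surjectivity and injectivity—but the difficulty is located in the wrong place, and the step you treat as routine is in fact the heart of the matter.

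\textbf{Surjectivity is the hard direction, and your argument for it fails.} You write that a ``suitable partition of unity, refined to be bi-invariant under the relevant Moy--Prasad subgroup'' will split $f\in\CH^{\rig}_{r^+}$ as $\sum_{[\g]}f_{[\g]}$. But the $G$-domains $X_{[\g]}={}^G(\g_{\le r}H^\gamma_{r^+})$ are \emph{not} $G_{x,r^+}$-bi-invariant in general: a single coset $gG_{x,r^+}$ may meet several $X_{[\g]}$, so $f|_{X_{[\g]}}$ need not lie in $\CH_{r^+}$. The paper flags exactly this in \S\ref{subsec: thmA}: $\CH^{\rig}_{r^+}\supsetneq\oplus_{[\g]}\CH_{r^+}(X_{[\g]})$ at the level of functions. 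To repair this one needs the DeBacker-style descent machinery (displacement functions, generalized $r$-facets, Lemma \ref{lem: DeB-depth-descent}, Proposition \ref{prop: descent}): one iteratively rewrites $\mathbbm{1}_{gG_{y,r^+}}$ modulo $[\CH,\CH]$ until each term has $\dth(g_i)=\dth(g_i,y_i)$, at which point the coset really does sit inside a single $X_{[\g]}$. Your proposal skips this entirely.

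\textbf{Conversely, the disjointness you labour over is nearly free.} Because the $X_{[\g]}$ are disjoint $G$-domains, Lemma \ref{domain} gives directly that $\sum_{[\g]}\overline\CH(X_{[\g]})\subset\overline\CH$ is a direct sum; no tubular neighbourhoods or Harish-Chandra descent are needed for that. What \emph{does} require work is the injectivity of each individual map $\bar i_{\g,r^+}\colon \overline\CH^{H^\gamma,\flat}_{r,r^+}\to\overline\CH^{G,\flat}_{[\g]}$, and here the paper's argument (Theorem \ref{JD-4}) is concrete: test against orbital integrals $O_{\g_{\le r}u}$ for $u$ unipotent in $H^\gamma$, using DeBacker's parametrisation $I^d_r(H^\gamma)/\tilde\sim\cong Cl^u(H^\gamma)$ (adapted to the possibly disconnected $H^\gamma$). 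Your ``Howe-type finiteness'' gesture does not supply this.

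Two smaller points. First, ``average over $G$-conjugation to produce $I_{[\g]}(f)$'' is not a well-defined operation on $\CH$, since $G$ is non-compact; the paper's map is $\mathbbm{1}_{hH^\gamma_{x,r^+}}\mapsto \frac{\mu_{H^\gamma}(H^\gamma_{x,r^+})}{\mu_G(G_{x,r^+})}\mathbbm{1}_{\g_{\le r}hG_{x,r^+}}$, and showing this descends to the cocenter (Proposition \ref{good-i}) is itself a delicate inductive argument via Lemma \ref{lem: approximation}. Second, the freeness of each $\overline\CH^{H^\gamma,\flat}_{r^+}$ is not something ``recalled'' from earlier work; it is proved here (Proposition \ref{JD-4'}, Theorem \ref{JD-4}) and again relies on orbital integrals to establish linear independence over $\BZ[\tfrac1p]$.
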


\begin{theoremA}[Theorem \ref{thmB'}]\label{thmB} Let $J(G^\rig)$ denote the space of $\mathbb C$-valued invariant distributions of $G$ with support on $G^\rig$. 
Suppose Hypotheses in \S\ref{subsec: hypos} hold. Then the restriction $J(G^\rig) \mid_{\CH_{r^+, \mathbb C}}$ has a basis given by the restriction of orbital integrals $O_{\g_{\le r} u}$ to ${\CH_{r^+, \mathbb C}}$, where $[\g] \in \CS_r$, and $u$ runs over the representatives of the unipotent conjugacy classes of $C_G(\g_{\le r})$. 
\end{theoremA}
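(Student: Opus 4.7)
The plan is to deduce Theorem~\ref{thmB} from Theorem~\ref{thmA} by dualization over $\mathbb C$ and then to realize the dual basis concretely via orbital integrals.

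As a first step, I identify $J(G^\rig)\mid_{\CaH_{r^+,\mathbb C}}$ with the full $\mathbb C$-linear dual $(\overline \CaH^{\rig}_{r^+,\mathbb C})^{\ast}$. Any invariant distribution annihilates commutators, hence factors through $\overline \CaH_{\mathbb C}$; support in $G^\rig$ forces vanishing on classes represented by functions off $G^\rig$, and conversely any functional on the rigid summand extends---via the $+$-rigid decomposition of $\overline\CaH$ recalled in \S\ref{0.2}---to an invariant distribution supported in $G^\rig$. Tensoring Theorem~\ref{thmA} with $\mathbb C$ then gives
$$
(\overline \CaH^{\rig}_{r^+,\mathbb C})^{\ast}
\;\cong\;
\prod_{[\g]\in\CaS_r}\bigl(\overline \CaH^{C_G(\g_{\le r}),\flat}_{r^+,\mathbb C}\bigr)^{\ast},
$$
so it suffices to exhibit a basis of each factor.

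For each fixed $[\g]\in\CaS_r$, DeBacker's theorem on depth $r$ unipotent cocenters---already invoked to set up the $\flat$-part in Theorem~\ref{thmA}---supplies a basis of $\overline \CaH^{C_G(\g_{\le r}),\flat}_{r^+}$ indexed by unipotent conjugacy classes $[u]$ of $C_G(\g_{\le r})$, with dual basis realized by the $C_G(\g_{\le r})$-orbital integrals $O_u$ on depth $r^+$ test functions. It therefore remains to show that, under the isomorphism of Theorem~\ref{thmA}, the $G$-orbital integral $O_{\g_{\le r} u}\mid_{\CaH_{r^+,\mathbb C}}$ corresponds exactly to $O_u$ on the $[\g]$-summand and annihilates every $[\g']$-summand with $[\g']\neq[\g]$.

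This matching is the main obstacle. The vanishing on mismatched summands should follow because classes in the $[\g']$-summand are represented by functions whose $G$-support, up to conjugation, is contained in a neighborhood of $\g'_{\le r}$ times topologically unipotent elements of $C_G(\g'_{\le r})$, which is disjoint from the $G$-orbit of $\g_{\le r} u$ when $[\g']\neq[\g]$. On the $[\g]$-summand, I expect a parabolic-type descent argument, using the good-product decomposition of Adler--Spice together with the Moy--Prasad filtration analysis underlying Theorem~\ref{thmA}, to reduce the integral of a lifted test function over the $G$-conjugacy class of $\g_{\le r} u$ to a $C_G(\g_{\le r})$-orbital integral of the descended function at $u$. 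The delicate point is to track measure normalizations through this descent so that the resulting pairing matches the DeBacker dual basis exactly on each summand, yielding linear independence and spanning simultaneously.
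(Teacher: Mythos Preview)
Your overall plan---dualize Theorem~\ref{thmA}, kill the cross-terms via disjoint $G$-domains, and analyze each summand separately---is the paper's plan too. (One quibble: the extension of a functional on $\overline\CH^\rig_{r^+,\mathbb C}$ to an invariant distribution supported in $G^\rig$ comes more directly from $G^\rig$ being an open--closed $G$-domain together with Lemma~\ref{domain}, rather than from the Levi-wise decomposition in \S\ref{0.2}.)

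Where you diverge is at your ``main obstacle.'' You propose to establish a full descent identity $O^G_{\g_{\le r}u}\circ\bar i_{\g,r^+}=c\cdot O^{H^\gamma}_u$, tracking measure normalizations, and then transport the DeBacker basis through $\bar i_{\g,r^+}$. This is doable but unnecessary. The paper instead pairs $O_{\g_{\le r}u}$ directly against the basis $\{\bar i_{\g,r^+}(\mathbbm{1}_{(F^*,X)})\}$ of $\overline\CH_{r^+,\mathbb C}({}^G(\gamma H^\gamma_{r^+}))$ furnished by Theorem~\ref{JD-4}. The support of $\bar i_{\g,r^+}(\mathbbm{1}_{(F^*,X)})$ lies in $^G(\gamma X)$; by Lemma~\ref{lem: good element centralizer} the $G$-orbit of $\gamma_{\le r}u$ meets $\gamma X$ only if $u$ is $H^\gamma$-conjugate into $X$, which forces $[u]\ge\CO(F^*,X)$ in the closure order. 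Thus the pairing matrix is triangular with nonzero diagonal (since $\CO(F^*,X)$ does meet $X$), and its invertibility gives linear independence and spanning of the $O_{\g_{\le r}u}$ simultaneously---no normalization constants needed. This triangularity argument is exactly the content of the proof of Theorem~\ref{JD-4}(2), so the paper's proof of Theorem~\ref{thmB'} is essentially a one-line reference back to it. Your route would buy an explicit identification of $O_{\g_{\le r}u}$ with a descended orbital integral, which is of independent interest but not required for the theorem as stated.

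A minor correction: DeBacker's analysis (Proposition~\ref{JD-4'}) produces $\{O_u\}$ as \emph{a} basis of the dual via this same triangularity, not the dual basis to $\{\mathbbm{1}_{(F^*,X)}\}$; the pairing matrix is triangular, not the identity.
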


Theorem \ref{thmB}, together with the Newton decomposition in \cite{hecke-1}, gives a precise estimate on the Howe's conjecture on the restriction of invariant distributions. For more details, see the discussion in \S\ref{howe-diss}.

\subsection{} In \S\ref{sec: prelim}, we review some background materials on Moy--Prasad filtration subgroups and the cocenter $\CaH$ of $G$. Toward the decomposition of $\overline \CH^{\rig}_{r^+}$ in Theorem \ref{thmA}, in \S2, we first decompose $G^\rig$ into a disjoint union of $G$-domains $X_{[\gamma]}:=\,^G\!\left(\gamma C_G(\gamma_{\le r})_{r^+}\right)$ parameterized by $[\gamma]\in\CaS_r$. We use good products of semisimple elements (\cite{AS}) to prove that $G^\rig=\cup_{[\gamma]\in\CaS_r} X_{[\gamma]}$. The Lie algebra version of such decompositions can be found in \cite[\S7]{Kim}. Then, it is easy to see that there is a corresponding decomposition of $\CaH^\rig$ according to this decomposition: $\CH^{\rig}=\oplus_{[\gamma]}\CaH(X_{[\gamma]})$  (Theorem \ref{domain}) where $\CaH(X_{[\gamma]})$ is the submodule consisting of $f\in\CaH_{r^+}$ with $\supp(f)\subset X_{[\gamma]}$. However, since each domain $X_{[\gamma]}$ is not necessarily $G_{x,r^+}$ bi-invariant, $\oplus_{[\g] \in \CS_r}  \CaH_{r^+}(X_{[\gamma]})$ is in fact a proper submodule of $\CH^\rig_{r^+}$ (see \S\ref{subsec: thmA}). 

Now, Theorem \ref{thmA} asserts that the desired decomposition holds at the level of cocenters. 
In \S\ref{sec: descents}, we prove Theorem \ref{thmA} via the following strategy: we first represent elements in $\overline\CaH^\rig_{r^+}$ by elements in $\CaH_{0,r^+}:=\sum_{x\in\Bd(G)} C_c\left(Z(G)\stab_G(x)/G_{x,r^+}\right)$ in the cocenter, and then represent elements in $\overline\CaH_{0,r^+}$ by elements in $\oplus_{[\gamma]} \overline \CaH(X_{[\gamma]})$.
In these steps, we use the descent arguments developed by Howe, Harish-Chandra, Waldspurger, and most recently by DeBacker.  Especially, DeBacker's arguments in \cite{De2} are aptly adaptable in our situations in view of recent developments in harmonic analysis in $p$-adic groups. As a result, most of our hypotheses are inherited from \cite{AS} and \cite{De2}. Lastly, we prove $\overline\CaH_{r^+}(X_{[\gamma]})\simeq\overline\CH^{C_G(\g_{\le r}),\flat}_{r^+}$ using inductive descents (see Proposition \ref{good-i}). A Lie algebra version of inductive descents can be found in \cite[\S6]{KMu2}. 

In \S\ref{sec: JD}, we prove Theorem \ref{thmB}. We combine inductive descents and the parameterization of unipotent conjugacy classes in \cite{De2}. However, since the centralizer of a semisimple element is not necessarily connected in this paper, one needs to adapt DeBacker's parameterization for our case.  

In \S\ref{sec: ex}, we present examples to illustrate the duality between cocenters and representations.

\

\noindent
{\it Acknowledgments.} We would like to thank Anne-Marie Aubert for her valuable comments. The second author would like to thank Roger Howe for his question which lead the authors to add \S5 of this paper.
This work has been done while both authors were members at the Institute for Advanced Study during 2016-1017. We would like to thank their hospitality and support.

\subsection*{\bf Notation and Conventions}\label{sec: notation}

Let $F$ be a locally compact field 
with finite residue field $\bbF_{p^n}$.
Let $\bG$ be a connected reductive group defined over $F$. For any finite extension $E$ of $F$, 
let $\bG(E)$ be the group of $E$-rational points of $\bG$. We will simply write $\G$ for $\bG(F)$.
Denote the Lie algebras of $\bG$ and $\bG(E)$ by $\blieG$ and
$\blieG(E)$, respectively. 
In general, we use bold characters $\bH,\,\bM$ and $\bN$, etc
to denote algebraic groups. If they are defined over $F$, we will use
corresponding Roman characters $H,\,M$ and $N$ to denote the groups of
$F$-points, and $\mathfrak h,\,\lieM$ and $\lieN$ to denote
the Lie algebras of $H,\,M$ and $N$. Let $\overline \bG:=(\bG\left/\Z(\bG)\right.)^\circ$ where $\Z(\bG)$ is the center of $\bG$.

We denote by $G^\sms$ the set of semisimple elements in $G$, by $\CaU$ the set of unipotent elements in $G$, and by $G^\rig$ the set of compact-modulo-center elements in $G$. 

We let $\mu_G$ denote a fixed Haar measure on $G$.



For $g\in G$, ${ }^g \!X$ denotes $gXg^{-1}$ and
for $S, H\subset\G$, $^H\!S:=\{\,{}^g\!X \mid X\in S,\ g\in H\}$. 

We set $\tilde \BR=\BR \sqcup \{r^+; r \in \BR\} \sqcup \{\infty\}$ and define the partial order on $\tilde \BR$ as follows: 
for $r, s \in \BR$, $r<s^+$ if $r \le s$, $r^+<s^+$ and $r^+<s$ if $r<s$, and $r, r^+<\infty$ for any $r \in \BR$. 

We denote by $\CH$, the Hecke algebra of locally constant, compactly supported $\BZ[\frac{1}{p}]$-valued functions on $G$. The cocenter $\overline \CH=\CH/[\CH, \CH]$. Let $\CH^\rig$ be the $\BZ[\frac{1}{p}]$-submodule of $\CH$ consisting of functions supported in $G^\rig$. The rigid cocenter $\overline \CH^\rig$ is the image of $\CH^\rig$ in $\overline \CH$.

\midvsp

\ 

\section {\bf Preliminaries}\label{sec: prelim}

\subsection{Moy-Prasad Filtrations}\label{subsec: MP filtrations}

\begin{numbering}{\bf Apartments and buildings.}\rm \ 
For a finite extension $E$ of $F$, 
let $\Bd(\bG, E)$ denote the extended Bruhat-Tits building of $\bG$ over $E$. 
Recall that 
$\Bd(\bG, E)
\simeq\Bd(\overline{\bG},E)\times\left(\bX_\ast(Z(\bG),E)\otimes\bbR\right)$, and
$\bX_\ast(\Z(\bG),E)$ is the abelian group 
of $E$-rational cocharacters of
the center $Z(\mathbb G)$ of $\bG$.
If $\bT$ is a maximal $F$-torus in $\bG$ which
splits over $E$, let $\Apt(\bT,E)$ be the corresponding apartment 
over $E$. It is known that for any tamely ramified
finite Galois extension $E'$ of $E$, 
$\Bd(\bG,E)$ can be embedded into $\Bd(\bG,E')$
and its image is equal to the set of the Galois fixed points in $\Bd(\bG,E')$
(see \cite[(5.11)]{Rous} or \cite{Pr}).

For a maximal $F$-torus $\bT$ in $\bG$ which splits over a tamely ramified finite Galois extension $E$ of $F$, 
we write $\Apt(\bT,F)$ for $\Apt(\bT,E)\cap\Bd(\bG,F)$. This is
well defined independent of the choice of $E$. Moreover,
$\Apt(\bT,F)$ is the set of Galois fixed points in $\Apt(\bT,E)$.
For simplicity, we write $\Bd(\G)=\Bd(\bG,F)$, $\Apt(\T)=\Apt(\bT,F)$ etc. 
\end{numbering}

\begin{numbering}{\bf  Moy-Prasad filtrations.}\rm \ 
Regarding $\bG$ as a group defined over $E$, Moy and Prasad associate 
$\blieG(E)_{x,r}$ and $\bG(E)_{x,|r|}$ (resp. $\blieG(E)_{x,r^+}$ and $\bG(E)_{x,r^+}$) to $(x,r)\in\Bd(\bG,E)\times\bbR$ with respect to the valuation normalized as follows \cite{MP2}:
let $E^u$ be the maximal unramified extension of $E$,
and $E$ the minimal extension of $E^u$ over which $\bG$ splits.
Then the valuation used by Moy and Prasad maps $L^\times$ onto $\bbZ$.

In this paper, we let $\val=\val_F$ be the valuation on $F$ such that $\val(F^\times)=\bbZ$, $\val_E$ extends $\val$. Let $\overline F$ be an algebraic closure of $F$.
For an extension field $E$ of $F$, let $\val_E$ be the valuation
on $E$ extending $\val$. We will just write $\val$ for $\val_E$.
Then, with respect to our normalized valuation $\val$,
we can define filtrations in $\blieG(E)$ and $\bG(E)$.
Then our $\blieG(E)_{x,r}$ and $\bG(E)_{x,r}$ correspond to 
$\blieG(E)_{x,elr}$ and $\bG(E)_{x,elr}$ of Moy and Prasad,
where $e=e(E/F)$ is the ramification index of $E$ over $F$ and
$l=[L:E^u]$. 

This normalization is chosen to have the following property 
\cite[(1.4.1)]{Ad}: 

\begin{enumerate}
\item For a tamely ramified Galois extension $E'$ of $E$ 
and $x\in\Bd(\bG,E)\subset\Bd(\bG,E')$, for $r\in\widetilde{\mathbb R}$, we have
\[
\blieG(E)_{x,r}=\blieG(E')_{x,r}\cap\blieG(E).
\]
If $r>0$, 
\[
\bG(E)_{x,r}=\bG(E')_{x,r}\cap\bG(E).
\]
\item For $\rtm\in\frac1e\bbZ$, two points $x$ and $y$ in $\Bd(\bG,E)$ lie in
the same facet if and only if
\[
\bG(E)_{x,\rtm}=\bG(E)_{y,\rtm} \quad\textrm{and}\quad
\bG(E)_{x,\rtm^+}=\bG(E)_{y,\rtm^+}\,.
\]
\end{enumerate}
\end{numbering}

\smallskip

\begin{numbering} \rm
For simplicity, we put $G_{x,\rtm}:=\bG(F)_{x,\rtm}$, etc.
We will also use the following notation. For $\rtm\in\bbR_{\ge0}$, let
\[
\G_r=\cup_{x\in\Bd(\G)} \G_{x,r},\qquad \G_{r^+}=\cup_{s>r} \G_s.
\]

Let $\Phi(\bT,\bG,E)$ be the set of $E$-roots of $\bT$ in $\bG$,
and let $\Psi(\bT,\bG,E)$ be the corresponding set of affine roots in $\bG$.
If $\psi\in\Psi(\bT,\bG,E)$, 
let $\dot\psi\in\Phi(\bT,\bG,E)$ be the gradient of $\psi$, 
and let $\bU(E)_{\dot\psi}\subset\bG(E)$ be the root group 
corresponding to $\dot\psi$.
We denote the root subgroup in $\bU(E)_{\dot\psi}$ corresponding
to $\psi$ by $\bU(E)_\psi$.

Let $\bX_\ast(\bT,E)$ be the set of cocharacters of $\bT$, 
and let $\bX^\ast(\bT,E)$ be the set of characters of $\bT$. Let $T_0$ be the maximal compact subgroup of $T$.
For $r\ge0$, set
\begin{align*}
T_r:&=\{t\in T_0\mid\nu(\chi(t)-1)\ge r\textrm{ for all }\chi\in\bX^\ast(\bT, E)\}, \\
Z_r:&=T_r\cap Z_G.
\end{align*}
Note that $Z_r$ is well defined independent of the choice of $T$.

In the rest of this paper, $E$ will denote a tamely ramified finite extension of $F$ unless otherwise stated.

\end{numbering}

\smallskip

\subsection{Cocenters}  \ 

\smallskip

\begin{numbering} \rm
For $s \in \tilde \BR_{\ge 0}$, let $\CH(G, G_{x, s})$ be the space of compactly supported, $G_{x, s} \times G_{x, s}$-invariant $\BZ[\frac{1}{p}]$-valued functions on $G$ and $C_c(G/G_{x, s})$ be the space of compactly supported, right $G_{x, s}$-invariant $\BZ[\frac{1}{p}]$-valued functions on $G$. Note that for any $g \in G$ and $x \in \Bd(G)$, we have 
$$\mathbbm{1}_{g G_{x, s}} \equiv \frac{\mu_G(G_{x, s})}{\mu_G(G_{x, s} g G_{x, s})} \mathbbm{1}_{G_{x, s} g G_{x, s}} \mod [\CH, \CH].$$ Thus $\CH(G, G_{x, s})$ and $C_c(G/G_{x, s})$ have the same image in $\overline \CH$. We denote by $\overline \CH_s$ the image of $\CH_{s}=\sum_{x \in \Bd(G)} C_c(G/G_{x, s})$ in $\bar\CH$. Then $\overline \CH=\varinjlim\limits_s\overline \CH_{s}$. 

We set $\overline \CH^{\rig}_{s}=\overline \CH^{\rig} \cap \overline \CH_{s}$. Then $\overline \CH^\rig=\varinjlim\limits_s\overline \CH^\rig_{s}$. 
\end{numbering}

A $G$-domain, by definition, is an open and closed subset of $G$ that is stable under the conjugation action of $G$. We have the following simple facts about on the cocenter $\overline \CH$. 

\begin{lemma}\label{f-g-f}
Let $X$ be a $G$-domain and $\CH(X)$ be the $\BZ[\frac{1}{p}]$-submodule of $\CaH$ consisting of functions supported in $X$. Then $\CH(X) \cap [\CH, \CH]$ is spanned by $f-{}^g\! f$ for $f \in \CH(X)$ and $g \in G$. 
\end{lemma}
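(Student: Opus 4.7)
The plan is to reduce the lemma to the classical identification of the commutator subspace of $\CH$ with the subspace of conjugation differences, combined with the $G$-stable clopen decomposition $\CH = \CH(X) \oplus \CH(X^c)$ arising from the partition $G = X \sqcup X^c$.

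Set $V := \operatorname{span}_{\BZ[\frac{1}{p}]}\{f - {}^g f : f \in \CH,\ g \in G\}$. My first step is to recall the standard identification $V = [\CH,\CH]$. For $V \subseteq [\CH,\CH]$, given $f \in \CH$ bi-invariant under a compact open subgroup $K$ and $g \in G$, a direct pointwise computation with $K' := K \cap g^{-1} K g$ gives
\[
\chf_{gK'} * f \,-\, f * \chf_{gK'} \;=\; \mu_G(K')\,\bigl(h - {}^{g^{-1}} h\bigr),
\]
where $h(x) := f(g^{-1}x)$; the identity ${}^{g^{-1}} h(x) = f(xg^{-1})$ makes both convolutions collapse to single values after using the bi-$K$-invariance of $f$. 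Since $f \mapsto h$ is a bijection of $\CH$ for each fixed $g$, this forces $h - {}^{g^{-1}} h \in [\CH,\CH]$ for all $h \in \CH$ and all $g \in G$, i.e.\ $V \subseteq [\CH,\CH]$. The reverse inclusion $[\CH,\CH] \subseteq V$ follows by expanding an arbitrary commutator $\phi * \psi - \psi * \phi$ in a common left-coset basis (shrinking $K$ further if needed) and applying the same identity to each summand.

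Granted $V = [\CH,\CH]$, the lemma follows by $G$-equivariant bookkeeping. Because $X$ is $G$-stable, conjugation preserves both $\CH(X)$ and $\CH(X^c)$; writing $V_X$ (resp.\ $V_{X^c}$) for the analog of $V$ with $f$ restricted to $\CH(X)$ (resp.\ $\CH(X^c)$), the decomposition $f = f^X + f^{X^c}$ of each generator of $V$ yields $V = V_X \oplus V_{X^c}$ with $V_X \subseteq \CH(X)$ and $V_{X^c} \subseteq \CH(X^c)$. Intersecting with $\CH(X)$ then gives
\[
[\CH,\CH] \cap \CH(X) \;=\; V \cap \CH(X) \;=\; V_X,
\]
which is the claim.

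The only nontrivial input is the commutator formula of the first step, which is entirely elementary once $K$ is shrunk appropriately; the remainder is pure bookkeeping using the clopen $G$-stable partition $G = X \sqcup X^c$. I expect no real obstacle beyond keeping the left/right-translation normalizations straight.
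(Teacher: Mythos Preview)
Your proposal is correct and follows essentially the same route as the paper: the paper simply cites \cite[Proposition~1.1]{hecke-1} for the identification $[\CH,\CH]=\operatorname{span}\{f-{}^g f\}$ (which you reprove from scratch), and then restricts both sides to the $G$-domain $X$ exactly as you do via the $G$-equivariant splitting $\CH=\CH(X)\oplus\CH(X^c)$. Your commutator formula for the inclusion $V\subseteq[\CH,\CH]$ is fine; the reverse inclusion sketch is a bit thin as written (one must be a little careful that the relevant $K'$ depends on the coset representative), but it is standard and the paper treats it as known.
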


\begin{proof}
Let $f \in \CH(X) \cap [\CH, \CH]$. By \cite[Proposition 1.1]{hecke-1}, $f=\sum_i (f_i-{}^{g_i} f_i)$, where $f_i \in \CH$ and $g_i \in G$. Since $X$ is a $G$-domain, $f_i \mid_X \in \CH(X)$ and $({}^{g_i} f_i) \mid_X={}^{g_i} (f_i \mid_X)$ for any $i$. Thus $f=\sum_i (f_i \mid_X-{}^{g_i} f_i \mid_X)$. 
\end{proof}

\begin{lemma}\label{domain}
Let $\{X_\a\}_{\alpha\in I}$ be a family of $G$-domains in $G$ such that $X_\a \cap X_{\a'}=\emptyset$ for any $\a \neq \a'$. Then $\sum_{\a\in I} \overline \CH(X_\a) \subset \overline \CH$ is a direct sum. Here $\overline \CH(X_\a)$ is the image of $\CH(X_\a)$ in $\overline \CH$. 

If moreover $G=\sqcup_{\a\in I} X_{\a}$, then $\overline \CH=\oplus _{\a}\overline \CH(X_\a)$. 
\end{lemma}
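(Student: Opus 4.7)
The plan is to prove the two assertions in sequence. Both rest on the same observation: multiplication by $\mathbbm{1}_{X_\a}$ commutes with the $G$-conjugation action whenever $X_\a$ is a $G$-domain.

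For the direct sum, I would take $\sum_\a \bar f_\a = 0$ in $\overline\CH$ with $f_\a \in \CH(X_\a)$ finitely supported in $\a$, and show each $\bar f_\a = 0$. Using that $[\CH,\CH]$ is spanned by expressions $h - {}^g\! h$ (the proposition from \cite{hecke-1} cited in the proof of Lemma \ref{f-g-f}), I would write $\sum_\a f_\a = \sum_i (h_i - {}^{g_i}\! h_i)$ and then multiply both sides by $\mathbbm{1}_{X_\beta}$ for a fixed index $\beta$. By the disjointness of the family, the left-hand side collapses to $f_\beta$; since $X_\beta$ is $G$-stable, $({}^{g_i}\! h_i)\cdot\mathbbm{1}_{X_\beta} = {}^{g_i}\!(h_i\cdot\mathbbm{1}_{X_\beta})$, so the right-hand side becomes $\sum_i\bigl(h_i\mathbbm{1}_{X_\beta} - {}^{g_i}\!(h_i\mathbbm{1}_{X_\beta})\bigr) \in [\CH,\CH]$. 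Hence $f_\beta \in [\CH,\CH]$, so $\bar f_\beta = 0$ and the sum $\sum_\a \overline\CH(X_\a)$ is direct. This is essentially the same argument that proves Lemma \ref{f-g-f}, applied in parallel to each piece.

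For the ``moreover'' clause, assume $G = \sqcup_\a X_\a$. I would decompose an arbitrary $f \in \CH$ as $f = \sum_\a f\mathbbm{1}_{X_\a}$: each $\mathbbm{1}_{X_\a}$ is locally constant because $X_\a$ is both open and closed, so $f\mathbbm{1}_{X_\a} \in \CH(X_\a)$; compactness of $\supp(f)$ together with the disjointness of the open cover $\{X_\a\}$ forces $f\mathbbm{1}_{X_\a} = 0$ for all but finitely many $\a$. Thus $\CH = \sum_\a \CH(X_\a)$, so $\overline\CH = \sum_\a \overline\CH(X_\a)$, which combined with the first part gives $\overline\CH = \oplus_\a \overline\CH(X_\a)$.

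No substantive obstacle arises; the proof is short and essentially formal. The only subtlety is ensuring that the commutator relation survives restriction to $X_\beta$, and this is handled by the $G$-invariance of the cutoff $\mathbbm{1}_{X_\beta}$.
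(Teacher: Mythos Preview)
Your proof is correct and follows essentially the same argument as the paper: both use \cite[Proposition 1.1]{hecke-1} to write the commutator as a sum of $h_i-{}^{g_i}h_i$, then restrict to $X_\beta$ (your ``multiply by $\mathbbm{1}_{X_\beta}$'' is exactly the paper's ``restrict to $X_\a$'') and use $G$-invariance of $X_\beta$ to conclude. The second part is likewise identical, with your treatment of finiteness being slightly more explicit than the paper's.
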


\begin{proof}
Let $f_\a \in \CH_{X_\a}$, $\alpha\in I$ such that $\Gamma:=\{\a; f_\a \neq 0\}$ is a finite set. Suppose that $\sum_{\a \in \Gamma} f_\a \in [\CH, \CH]$. Then by \cite[Proposition 1.1]{hecke-1}, there exists finitely many pairs $(f_i, x_i) \in \CH \times G$ such that 
\[\tag{a} \sum_{\a \in \Gamma} f_\a=\sum_i (f_i-{}^{x_i} f_i).\] 
Restricting both sides of (a) to $X_\a$, we have $f_\a=\sum_i (f_i \!\mid_{X_\a}-({}^{x_i} f_i) \!\mid_{X_\a})$.  Since $X_\a$ is a $G$-domain, we have $f_i \!\mid_{X_\a} \in \CH$ and $({}^{x_i} f_i) \!\mid_{X_\a}={}^{x_i} (f_i \!\mid_{X_\a})$. 

Thus $f_\a=\sum_i (f_i \!\mid_{X_\a}-{}^{x_i} (f_i \!\mid_{X_\a})) \in [\CH, \CH]$. Therefore the image of $f_\a$ in $\overline \CH$ is zero and $\sum_\a \overline \CH(X_\a) \subset \overline \CH$ is a direct sum.

If moreover $G=\sqcup_\a X_\a$, then for any $f \in \CH$, $f=\sum_{\a} f \!\mid_{X_\a} \in \sum_{\a} \CH(X_\a)$. Hence $\overline \CH=\sum \overline \CH(X_\a)$. By what we proved above, this is a direct sum. 
\end{proof}

\section{\bf Semisimple Elements and Decomposition of $G^\rig$}\label{sec: ss}

From now on, let $r$ be a positive real number. 

\subsection{\bf Depth functions and good elements}\label{subsec: defns}

If $G$ is semisimple, the following definitions in \ref{defn: depth} and \ref{defn: good} coincide with those in \cite{AS}.

\begin{definition}\label{defn: depth}
\rm Write $Z:=Z(G)$.
For $x\in\Bd(G)$, define the depth-mod-center function 
\[
\dth^{\bG}(x,\ ):\Z\, \stab_G(x)\rightarrow\mathbb R \sqcup \{\infty\},
\]
such that 
\[
\dth^{\bG}(x,g)=\begin{cases} \qquad 0&\textrm{if } g\in \Z\stab_G(x)\setminus \Z G_{x,0^+}\\
\max\{s\mid zg\in G_{x,s} \textrm{ for some }z\in Z\} &\textrm{if }g\in \Z G_{x,0^+}\setminus Z\\
\qquad\infty&\textrm{if }g\in Z.
\end{cases}
\]
Define also 
\[
\dth^{\bG}(g)=\max\{\dth^{\bG}(x,g)\mid x\in\Bd(G),\ g\in\stab_G(x)\}.
\]
We simply write $\dth$ for $\dth^{\bG}$ is there is no confusion.
\end{definition}

We observe the following: 
\begin{enumerate}
\item  If $g\in \Z G_{x,0^+}$, $\dth(x,g)$ is the unique value $t$ so that $g\in \Z G_{x,t}\setminus \Z  G_{x,t^+}$. In most applications, it is possible to assume that $g\in G_{x,t}\setminus \Z G_{x,t^+}$ without loss of generality. In this case, we call $g$ \emph{noncentral mod $G_{x,t^+}$}. 

Likewise, when $\dth(g)=0$, one may assume that $g\in \stab_G(x)\setminus \Z G_{x,0^+}$ in most cases.
Note that $\stab_G(x)$ is compact since $\Bd(G)$ is an extended building. Again, we say $g$ is \emph{noncentral mod $G_{x,0^+}$} if $g\in \stab_G(x)\setminus \Z G_{x,0^+}$.
\item $\dth(y,g)\le\dth(g)$ for any $y\in \Bd(G)$, $g\in \stab_G(y)$ . 
\item
If $g\in \Z G_0$, $\dth(g)$ is the unique non negative real number $t$ such that 
$g\in \Z G_t\setminus \Z G_{t^+}$.
\item $\dth(x,g)=\dth(x,g')$ for all $g'\in g G_{x,t^+}$ where $\dth(x,g)=t$.
\item  $\dth(g)=\infty$ if and only if $g \in \Z \,\CaU\cap G$.
\item Let $g\in G^\rig$. If $g=\gamma u$ is the Jordan decomposition of $g$ with $\gamma\in G^\sms$ and $u\in\CaU$, we have $\dth(g)=\dth(\gamma)$.
\end{enumerate}

\begin{definition} [cf. Definition 6.1,\cite{AS}] \label{defn: good} \rm
For $\gamma\in G^\rig$, $\gamma$ is a \emph{$\bG$-good mod center element}
if there is a maximal $F$-torus $\bT$ which splits over a tamely ramified extension $E$ such that one of the the following holds:
\begin{enumerate}
\item $\gamma\in \Z T^c\setminus \Z T_{0^+}$  and the image of $\gamma$ in $\overline G$ is absolutely semisimple (see \cite{Hales} or \cite[Definition 4.11]{AS} for definition), where $T^c$ is the set of compact elements in $T$.
\item There is $t>0$ so that $\gamma\in \Z T_t\setminus \Z T_{t^+}$ with $\val(\alpha(\gamma)-1)=t$ or $\alpha(\gamma)=1$ for any $\alpha\in\Phi(\bT,\bG,E)$. 
\item $\gamma\in Z$.
\end{enumerate}
We will simply say $\gamma$ is $\bG$-\emph{good} of depth $t$ if either $\dth(\gamma)=0$ and $\gamma\in T^c$, or $\dth(\gamma)=t>0$ and $\gamma\in T_t$.
\end{definition}

\begin{remarks}\rm 
Keeping the situation as in the above definition, we observe the following:
\begin{enumerate}
\item
The depth of a good mod center element $\gamma$ is given as follows:
\[
\dth(\gamma)=
\begin{cases}
0&\textrm{in case (1)}\\
t&\textrm{in case (2)}\\
\infty&\textrm{in case (3).}
\end{cases}
\]
\item
If $\gamma\in T\setminus Z$ is a good mod center element of depth $t>0$ (resp. $0$), $\gamma=z\gamma_t$ for some $z\in Z$ and a good element $\gamma_t\in T_t\setminus T_{t^+}$ (resp. $\gamma\in T^c\setminus T_{0^+}$).
\item
Let $\gamma\in G^\sms$ and $\bG'=C_{\bG}(\gamma)$. Let $\dth^{\bG'}$ be the depth function defined on $G^{\prime\rig}$ as in Definition \ref{defn: depth}. In general $\dth^{\bG'}\neq\dth^{\bG}$ on $G^{\prime\rig}$. 
However, if $g\in G^{\prime\rig}\setminus Z(G')$ is $\bG$-good, it is also $\bG'$-good and $\dth^{\bG'}(g)=\dth^{\bG}(g)$.
\end{enumerate}
\end{remarks}

\subsection{\bf Hypotheses.}\label{subsec: hypos} \rm We collect here some assumptions that we need in this paper. We will be clear when each hypothesis is used. A lot of them are due to that we use results from \cite{AS} and \cite{De2}. Rather than repeating the statements of the hypotheses, we refer them directly to \emph{loc. cit.}.

\

\noindent{\bf Hypotheses (A)-(D)} These are Hypotheses (A)-(D) in \cite[\S2]{AS}.

\

\noindent{\bf Hypotheses (DB)} \rm These are the hypotheses in \S 2.1 and \S.4.3 in \cite{De2}.



\begin{hypo}\label{hyp: JD} \rm The Jordan decomposition is defined over $F$, i.e., for any $g \in G=\BG(F)$ and Jordan decomposition $g=s u$ of $g$ with $s, u \in \BG(\overline F)$, we have $s, u \in G$.
\end{hypo}

\begin{hypo}\label{hyp: unip} \rm For any $g \in G^\sms$, all the unipotent elements in $C_G(\g)$ are contained in $C_G(\g)^\circ$. 
\end{hypo}

\begin{hypo}\label{hyp: tame} \rm
Any torus in $\bG$ splits over a finite tamely ramified extension of $F$.
\end{hypo}

\begin{hypo}[Definition 6.3, \cite{AS}]\label{hyp: good} \rm
For any torus $\bS\subset\bG$ which splits over a tamely ramified extension $E$, and $r>0$, every nontrivial coset in $S_r/S_{r^+}$ contains a good element.
\end{hypo}


\begin{hypo}\label{conver}
For any $g \in G^\rig$, the orbital integral $O_g$ converges over $\mathbb C$. 
\end{hypo}


Hypothesis \ref{hyp: JD} holds if $F$ is of characteristic $0$, or if $p>\text{rank}_{ss}(G)+1$. But it fails when $F$ is of positive characteristic and $p$ is small. See \cite[Proposition 48 \& Remark 49]{Mc}. 
Hypothesis \ref{hyp: unip} automatically holds if $F$ is of characteristic $0$. If $F$ is of characteristic $p$, then it holds when $p$ is large but fails for some small $p$. For example, when $p=2$ and $C_G(\g)$ has two connected components, then any elements in $C_G(\g) \setminus C_G(\g)^\circ$ of order $2$ is unipotent. Hypothesis \ref{hyp: tame} holds if $p>\text{rank}_{ss}(G)$. Hypothesis \ref{hyp: good} holds when $\bG$ splits over a tamely ramified extension and $p$ does not divide the order of the Weyl group of $\bG$ (see \cite{Fintzen}). Hypothesis \ref{conver} holds if $F$ is of characteristic $0$ (see \cite{Rao}), and holds under some mild assumptions on $G$ and on $p$ if $F$ is of positive characteristic (see \cite[Theorem 61]{Mc} for the precise statement). 


\subsection{\bf Good elements and $\Bd(G)$}

Many results here can be found in \cite{AS}. For the Lie algebra versions, we refer to \cite{Kim, KMu, KMu2}. 

In the following four lemmas and a corollary, we let $\gamma\in T$ be a $\bG$-good mod center element of depth $t\ge0$. We also let $\bG':=C_{\bG}(\gamma)$. 

\begin{lemma}\label{lem: fixed pts} Suppose Hypotheses (A) and (B) hold. Define $\Bd(\gamma)$ as follows:
\[
\Bd(\gamma):=
\begin{cases}
\{x\in\Bd(G)\mid \gamma\in \Z\,\stab_G(x)\}&\textrm{if }t=0,\infty\\
\{x\in\Bd(G)\mid \dth(x,\gamma)=\dth(\gamma)\} &\textrm{if }t>0.
\end{cases}
\]
Then, we have $\Bd(\gamma)=\Bd(\bG',F)$.
\end{lemma}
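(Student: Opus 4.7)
The plan is to split into the three cases $t=\infty$, $t=0$, and $t>0$, according to the definition of a good mod center element, and handle each in turn.

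In the case $\gamma \in Z$ (depth $\infty$), we have $\bG'=\bG$, so $\Bd(\bG',F)=\Bd(G)$; on the other side, $\gamma \in Z \subset Z\,\stab_G(x)$ for every $x$, so $\Bd(\gamma)=\Bd(G)$, and the equality is trivial. In the depth $0$ case, $\gamma\in ZT^c\setminus ZT_{0^+}$ and its image $\bar\gamma$ in $\overline G$ is absolutely semisimple. The classical Bruhat--Tits fact (available under Hypotheses (A)--(B)) that the fixed-point set of an absolutely semisimple element in $\Bd(\overline\bG,F)$ coincides with $\Bd(C_{\overline\bG}(\bar\gamma),F)$ together with the decomposition $\Bd(\bG,F)\simeq\Bd(\overline\bG,F)\times(\bX_\ast(Z(\bG))\otimes\mathbb R)$ lets us conclude, since $\gamma\in Z\,\stab_G(x)$ is equivalent to $\bar\gamma\in\stab_{\overline G}(\bar x)$, and $C_{\bG}(\gamma)/Z(\bG)$ has the same identity component as $C_{\overline\bG}(\bar\gamma)$.

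The main content is the case $t>0$, which we split into two inclusions. For $\Bd(\bG',F)\subset \Bd(\gamma)$: since $\gamma\in T\subset \bG'$, we have $T\subset\bG'$ as a maximal $F$-torus, so $\Apt(T,F)\subset \Bd(\bG',F)$. For $x\in \Apt(T,F)$ the Iwahori intersection gives $G_{x,s}\cap T=T_s$, hence $\gamma\in G_{x,t}\setminus G_{x,t^+}$, i.e.\ $\dth(x,\gamma)=t=\dth(\gamma)$. A general point of $\Bd(\bG',F)$ has the form $g\cdot x_0$ with $g\in G'$ and $x_0\in\Apt(T,F)$, and conjugation-invariance of the depth function together with $g\gamma g^{-1}=\gamma$ yields $\dth(gx_0,\gamma)=\dth(x_0,\gamma)=t$.

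The reverse inclusion $\Bd(\gamma)\subset \Bd(\bG',F)$ is the heart of the lemma. The approach is tame descent: by Hypothesis \ref{hyp: tame} choose a tame extension $E/F$ splitting $T$, and use $\Bd(\bG,F)=\Bd(\bG,E)^{\mathrm{Gal}(E/F)}$ and the corresponding statement for $\bG'$; it is enough to prove the inclusion over $E$, i.e.\ in the split setting. Given $x\in\Bd(\bG,E)$ with $\dth(x,\gamma)=t$, choose a maximal $E$-split torus $T''$ with $x\in\Apt(T'',E)$. The Iwahori decomposition of $\bG(E)_{x,t^+}$ along $\Apt(T'',E)$, together with Bruhat--Tits conjugacy carrying $T''$ to $T$ over $E$, expresses the hypothesis $\gamma\in \bG(E)_{x,t}\setminus \bG(E)_{x,t^+}$ in terms of the affine roots $\psi=\dot\psi+n$ with $\psi(x)\ge t$. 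The goodness property $\nu(\alpha(\gamma)-1)\in\{t,\infty\}$ then forces $\alpha(\gamma)=1$ for every root $\alpha$ contributing to the decomposition at level $>t$ at $x$, so $x$ lies in the apartment of a maximal torus of $\bG'(E)$, i.e.\ $x\in\Bd(\bG',E)$. Galois-descent recovers the claim over $F$.

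The main obstacle is the last step: making precise, via the Iwahori--Moy--Prasad decomposition and the root-wise depth constraint characterizing good elements, that any point of non-maximal depth for $\gamma$ lies in $\Bd(\bG',E)$. This is where Hypothesis \ref{hyp: good} and the normalizations of \S\ref{subsec: MP filtrations} are used decisively, and where one needs to be careful that the identification of $\Bd(\bG',E)$ as a subcomplex of $\Bd(\bG,E)$ matches the root subsystem $\{\alpha:\alpha(\gamma)=1\}$.
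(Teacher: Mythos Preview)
The paper's own proof is a one-line citation: after disposing of the central case, it reduces $\gamma$ to an honest good element and invokes \cite[Lemma~7.6]{AS}. Your proposal instead attempts a direct argument, and the forward inclusion $\Bd(\bG',F)\subset\Bd(\gamma)$ as well as the case $t=\infty$ are fine. The depth-$0$ case is a hand-wave to a ``classical Bruhat--Tits fact'' which is itself essentially \cite[Lemma~7.6]{AS}, so you have not gained anything there.

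The genuine gap is the reverse inclusion $\Bd(\gamma)\subset\Bd(\bG',E)$ for $t>0$. Your plan is to take $x$ with $\dth(x,\gamma)=t$, pick a split torus $T''$ whose apartment contains $x$, and read off the condition $\gamma\in\bG(E)_{x,t}$ via the Moy--Prasad decomposition relative to $T''$. But the goodness condition $\val(\alpha(\gamma)-1)\in\{t,\infty\}$ is a statement about roots $\alpha$ of the torus $T$ containing $\gamma$, whereas the filtration $\bG(E)_{x,t}$ decomposes along root groups for $T''$. These are different root systems, and the phrase ``Bruhat--Tits conjugacy carrying $T''$ to $T$'' does not bridge them: conjugating $T$ to $T''$ moves $\gamma$ to a different element, and conjugating $T''$ to $T$ moves $x$ out of the point you are analyzing. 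There is no mechanism in your sketch that converts the $T$-root valuations of $\gamma$ into a constraint on which apartment $x$ lies in. You acknowledge this as ``the main obstacle,'' but then invoke Hypothesis~\ref{hyp: good}; that hypothesis concerns the \emph{existence} of good elements in torus cosets and plays no role here (the lemma only assumes Hypotheses~(A) and~(B)).

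The argument that actually works (in \cite{AS}, and visible in the commented-out proof in the source) is geometric rather than root-group-theoretic: one shows $\Bd(\gamma)$ is convex and a union of closed chambers, then argues by contradiction. If some chamber $D_0\subset\Bd(\gamma)\setminus\Bd(\bG',E)$ shares a codimension-one facet $F$ with $\Bd(\bG',E)$, one analyzes the quotient $G_{F,t}/G_{F,t^+}$, which over the split locus has rank-one structure; goodness of $\gamma$ forces the commutator action on the relevant root subgroup to be nontrivial of exact depth $t$, which bounds the number of chambers adjacent to $F$ on which $\gamma$ can have depth~$\ge t$, yielding the contradiction. This wall-crossing idea is the missing ingredient.
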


\proof
If $\gamma\in Z$, clearly $\Bd(\gamma)=\Bd(G)$. Otherwise, without loss of generality, we may assume that $\gamma$ is $\bG$-good of depth $t$. Then, the lemma follows from \cite[Lemma 7.6]{AS}. 
\qed

\begin{lemma}\label{lem: approximation} Suppose Hypothesis (A) holds.
Let $x\in\Bd(G')$. Then, for $0\le t<s$ and $u\in G'_{x,t}\cap G'_{t^+}$, we have $^{G_{x,s-t}}(\gamma u G'_{x,s})=\gamma uG_{x,s}$.
\end{lemma}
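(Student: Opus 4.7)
The equality is a double inclusion, and I plan to verify each containment using the Moy--Prasad commutator formalism.

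For the easy direction ${}^{G_{x,s-t}}(\gamma u G'_{x,s})\subseteq \gamma u\,G_{x,s}$, the key input is the estimate $[G_{x,a},G_{x,b}]\subseteq G_{x,a+b}$. By Lemma~\ref{lem: fixed pts}, $x\in\Bd(\gamma)$, so $\gamma\in Z\cdot G_{x,t}$; since conjugation is unaffected by the central factor, we may treat $\gamma$ as an element of $G_{x,t}$. The hypothesis $u\in G'_{x,t}\cap G'_{t^+}$ will be unpacked to give $u\in G_{x,t^+}$ at the point $x$ (invoking the compatibility $G'_{x,\cdot}=G_{x,\cdot}\cap G'$ from Hypothesis~(A) together with the meaning of $G'_{t^+}$); I expect this to require a brief auxiliary step. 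Granted these, for $h\in G_{x,s-t}$ and $g\in G'_{x,s}$ the identity $[h,\gamma u]=[h,\gamma]\cdot\gamma[h,u]\gamma^{-1}$ together with $[h,\gamma]\in G_{x,s}$ and $[h,u]\in G_{x,s^+}$ yields $h\gamma u h^{-1}\in\gamma u\,G_{x,s}$. Since $h$ normalises $G_{x,s}$ and $G'_{x,s}\subseteq G_{x,s}$, we conclude $h\gamma u g h^{-1}\in\gamma u\,G_{x,s}$.

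For the reverse containment $\gamma u\,G_{x,s}\subseteq {}^{G_{x,s-t}}(\gamma u G'_{x,s})$, I would argue by successive approximation. Given $k\in G_{x,s}$, the goal is to produce $h\in G_{x,s-t}$ and $g\in G'_{x,s}$ with $h\gamma u g h^{-1}=\gamma u k$. Rearranging, this reduces to surjectivity of the map
\[
\Phi: G_{x,s-t}\longrightarrow G_{x,s}\big/G'_{x,s},\qquad h\longmapsto(\gamma u)^{-1}h^{-1}(\gamma u)h\bmod G'_{x,s}.
\]
Passing through the logarithm (permissible for $s>0$ under the standard mild hypotheses on $p$) and linearising, the leading-order behaviour of $\Phi$ is captured by $1-\Ad(\gamma u)^{-1}:\lieG_{x,s-t}\to\lieG_{x,s}/\lieG'_{x,s}$. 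Since $\Ad(u)-1$ shifts filtration by strictly more than $t$, modulo higher depth this map reduces to $1-\Ad(\gamma)^{-1}$, which by the goodness of $\gamma$ (Definition~\ref{defn: good}) satisfies $\val(\alpha(\gamma)-1)=t$ on every non-trivial root and therefore induces a bijection $\lieG_{x,s-t}/\lieG'_{x,s-t}\xrightarrow{\sim}\lieG_{x,s}/\lieG'_{x,s}$. A Hensel-type iteration, solving the required congruence one Moy--Prasad layer at a time, lifts this graded bijection to surjectivity of $\Phi$ and produces the required $h$; the $g\in G'_{x,s}$ then absorbs the residual $G'$-component.

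The principal obstacle is the interplay with the unipotent factor $u$: although $u$ commutes with $\gamma$, its adjoint action on non-$G'$ root spaces is not literally trivial, so one must guarantee that it enters only as a higher-order perturbation of $\Ad(\gamma)-1$ in the Hensel iteration. This is precisely why the hypothesis $u\in G'_{t^+}$ is imposed. A related technical point is that the hypothesis is phrased globally (via $G'_{t^+}$) whereas we need depth information at the specific point $x$; extracting depth $>t$ at $x$ from $u\in G'_{x,t}\cap G'_{t^+}$ will be the small bookkeeping step that underpins both inclusions.
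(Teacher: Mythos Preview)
Your direct successive-approximation argument is essentially how results of this type are established in \cite{AS}, and in outline it is correct; but the paper takes a much shorter route. The paper's proof is two lines: observe that $G^{\prime\sms}\cap uG'_{x,s}\cap G'_{t^+}\ne\emptyset$ (semisimple elements are dense), so one may replace $u$ by a semisimple element in the same $G'_{x,s}$-coset without changing either side of the asserted equality, and then the statement is exactly \cite[Corollary 7.5]{AS}. In other words, rather than perturbing the Adler--Spice argument to accommodate a non-semisimple $u$, the paper reduces to the semisimple case and cites the literature.

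Your approach buys a self-contained argument and explains \emph{why} the equality holds (the goodness condition $\val(\alpha(\gamma)-1)=t$ forces $1-\Ad(\gamma)^{-1}$ to be a graded bijection off $\lieG'$, and $u\in G'_{t^+}$ contributes only higher-order noise), whereas the paper's approach is quicker but opaque. Two technical points to watch in your version: first, your linearisation via the logarithm invokes hypotheses on $p$ that are not part of Hypothesis~(A) alone---the argument in \cite{AS} works purely at the group level via Moy--Prasad commutator estimates and an iterative conjugation, so you should either do the same or explicitly add the exp/log hypothesis. Second, your map $\Phi$ does not literally encode the problem (the target element $k$ sits between the two conjugation factors, not outside them); the correct iteration is to conjugate $\gamma u k$ successively by elements of $G_{x,s-t+j\epsilon}$ to push the ``$\perp$-part'' of $k$ into ever deeper filtration groups, exactly as in the commented-out split-case argument one finds in this literature.
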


\proof  
One may assume that $u$ is semisimple since $G^{\prime\sms}\cap uG'_{x,s}\cap G'_{t^+}\ne\emptyset$. Then, 
this follows from \cite[Corollary 7.5]{AS}. 
\qed

\begin{lemma}\label{lem: good element centralizer}
Suppose Hypothesis (C) holds. If $g\in\G$ is such that
$^g(\gamma G'_{t^+})\cap(\gamma G'_{t^+})\ne\emptyset$,
then $g\in G'=\bG'(F)$.
\end{lemma}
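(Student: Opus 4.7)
The plan is to reduce to the case of a semisimple $h$ via Jordan decomposition, and then to establish the following \emph{rigidity statement}: every semisimple element of the coset $\gamma G'_{t^+}$ is $G'_{t^+}$-conjugate to $\gamma$. Combining these two ingredients with the centralizer characterization $G'=C_G(\gamma)$ will give the conclusion.

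More precisely, suppose $g\in G$ satisfies $g h g^{-1}=h'$ with $h,h'\in\gamma G'_{t^+}$. Using Hypothesis \ref{hyp: JD}, I would take the Jordan decompositions $h=h_s h_n$ and $h'=h'_s h'_n$ over $F$. The uniqueness of Jordan decomposition then forces $h'_s=g h_s g^{-1}$ (and $h'_n=gh_n g^{-1}$), so the problem reduces to controlling $h_s$ and $h'_s$.

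Next, I would establish the rigidity claim. This is where Hypothesis (C) from \cite{AS} and Hypothesis \ref{hyp: good} enter, together with the good-element machinery of \cite[\S7]{AS} (compare Lemmas \ref{lem: fixed pts} and \ref{lem: approximation}). Intuitively, the goodness of $\gamma$ prescribes the valuative profile $\nu(\alpha(\gamma)-1)=t$ on all roots $\alpha$ with $\alpha(\gamma)\neq 1$, and any semisimple element in the small neighborhood $\gamma G'_{t^+}$ must share this profile; this forces it to be a $G'_{t^+}$-conjugate of $\gamma$. Granting the claim, write $h_s=k\gamma k^{-1}$ and $h'_s=k'\gamma (k')^{-1}$ with $k,k'\in G'_{t^+}\subset G'$. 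Then from $gh_s g^{-1}=h'_s$ we obtain $(k')^{-1} g k\in C_G(\gamma)=G'$, hence $g\in k' G' k^{-1}=G'$, as desired.

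The main obstacle is to justify the rigidity claim under the stated hypotheses, in particular to rule out ``twisted'' semisimple elements inside $\gamma G'_{t^+}$ that are not $G'_{t^+}$-conjugate to $\gamma$. For $t>0$ the argument should proceed via the root-valuation picture sketched above, closely paralleling the analysis in \cite[\S 7]{AS}. For $t=0$, where $\gamma$ is absolutely semisimple, it requires the uniqueness of the topological Jordan decomposition of compact elements and the canonicity of the absolutely semisimple component of $\gamma u$, which is again where Hypothesis (C) is essential.
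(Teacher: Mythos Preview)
Your central ``rigidity claim''---that every semisimple element of $\gamma G'_{t^+}$ is $G'_{t^+}$-conjugate to $\gamma$---is false, and this breaks the argument. Take the simplest case where $G'=C_G(\gamma)$ is a maximal torus $T$ (so $\gamma$ is regular). Then $\gamma T_{t^+}$ consists entirely of semisimple elements, conjugation by $T_{t^+}$ is trivial, and distinct elements of $\gamma T_{t^+}$ are genuinely non-conjugate (they have different eigenvalues). More generally, for any good $\gamma$, the coset $\gamma G'_{t^+}$ contains $\gamma \cdot Z(G')_{t^+}$, and none of these elements other than $\gamma$ itself is conjugate to $\gamma$. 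The valuative profile $\nu(\alpha(\gamma h)-1)=t$ for roots outside $G'$ is indeed shared by all $h\in G'_{t^+}$, but this only tells you that $C_G(\gamma h)\subset G'$, not that $\gamma h$ is conjugate to $\gamma$.

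You are also invoking Hypotheses~\ref{hyp: JD} and~\ref{hyp: good}, neither of which is assumed in the lemma; only Hypothesis~(C) is available. The paper's proof is a one-line reduction: replace $\gamma$ by a genuine $\bG$-good element (dropping the central factor, which is harmless) and then quote \cite[Lemma~7.1]{AS} directly. The argument there (and in the paper's commented-out draft) proceeds by reducing to the split case where $G'$ is a Levi subgroup, localizing to a fixed $x\in\Bd(G')$ so that one works with $\gamma G'_{x,t^+}$, passing to a \emph{regular} semisimple element in that coset, and then using a building/root-group argument in the style of Yu to force $g\in G'$. The key point is not that semisimple elements in the coset are conjugate to $\gamma$, but that any $G$-conjugacy between two such elements already descends to $G'$.
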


\proof Without loss of generality, we may assume $\gamma$ is $\bG$-good. Then, this is \cite[Lemma 7.1]{AS}.
\qed

\begin{lemma}\label{lem: good-building-2} 
Suppose Hypotheses (A) and (B) hold. Let $x\in\Bd(G)\setminus \Bd(G')$ and $u'\in G'_{\dth(\gamma)^+}$.
\begin{enumerate}
\item If $t=0$, $\gamma u'\not\in \Z\,\stab_G(x)$.
\item If $t>0$, then either $\gamma u'\not\in \Z\, \stab_G(x)$ or $\dth(x,\gamma u')<\dth(\gamma)$.
\end{enumerate} 
\end{lemma}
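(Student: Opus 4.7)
The plan is to contradict Lemma \ref{lem: fixed pts}, which identifies $\Bd(\gamma)$ with $\Bd(\bG',F)$: in each case the assumption $x \notin \Bd(G')$ translates into a failure of $\gamma$ to have the relevant stabilization/depth property at $x$, and one aims to transfer that failure to $\gamma u'$. The key feature used throughout is that $u' \in G'_{\dth(\gamma)^+}$ commutes with $\gamma$ and lies in $G_{0^+}$, so it is a topologically unipotent perturbation of $\gamma$.

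For (1), I argue by contradiction: assume $\gamma u' \in Z\,\stab_G(x)$, so $h := z^{-1}\gamma u' \in \stab_G(x)$ for some $z \in Z$. Writing $h = (z^{-1}\gamma)\,u'$ as a commuting product (since $u' \in C_G(\gamma)$), I would apply the topological Jordan decomposition in the compact abelian group $\overline{\langle h \rangle} \subset \stab_G(x)$: $h$ factors uniquely as a commuting product of an element of finite order prime to $p$ and a topologically unipotent element, both lying in $\stab_G(x)$. By Definition \ref{defn: good}(1) the image of $z^{-1}\gamma$ in $\overline G$ is absolutely semisimple (hence of finite order prime to $p$), while $u' \in G'_{0^+}$ is topologically unipotent; uniqueness---applied modulo $Z$ to absorb the central ambiguity---identifies $z^{-1}\gamma$ and $u'$ with these two components, forcing $\gamma \in Z\,\stab_G(x)$ and contradicting Lemma \ref{lem: fixed pts}.

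For (2), I again argue by contradiction: assume $\dth(x,\gamma u') \geq t$, so $z\gamma u' \in G_{x,t}$ for some $z \in Z$. Writing $\gamma = z_0\gamma_0$ with $z_0 \in Z$ and $\gamma_0 \in T_t$, the element $\gamma_0 u' \in T_t\cdot G'_{t^+}$ is topologically unipotent (commuting product of pro-$p$ elements); since $G_{x,t}$ is pro-$p$ for $t>0$, this forces the central factor $zz_0$ to be topologically unipotent as well, i.e.\ $zz_0 \in Z_{0^+}$. Passing to the Lie algebra via the logarithm on $G_{0^+}$ (available under the hypotheses of \S\ref{subsec: hypos}), we obtain
\[
\log(zz_0\gamma_0 u') \;=\; \log(zz_0) + \log\gamma_0 + \log u' \;\in\; \lieG_{x,t},
\]
with $\log(zz_0) \in \lieZ$, $\log\gamma_0 \in \lieT_t \subset \lieG'$, and $\log u' \in \lieG'_{t^+}$. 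Using the $\Ad(\gamma)$-stable decomposition $\lieG = \lieG' \oplus \lieG^\perp$, where $\lieG^\perp = (\Ad(\gamma)-1)\lieG$, together with its compatibility with the Moy--Prasad filtration at $x$---a tame-descent fact for good $\gamma$ in the style of Adler--Roche---one projects onto the $\lieG'$-factor of $\lieG_{x,t}$ and absorbs $\log u' \in \lieG'_{t^+}$ into $\lieG_{x,t}\cap\lieG'$, concluding that $\log\gamma_0 \in \lieG_{x,t} + \lieZ$, whence $\gamma \in Z\,G_{x,t}$. This yields $\dth(x,\gamma) \geq t$, contradicting $x \notin \Bd(\gamma)$.

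The main obstacle is the filtration-compatibility claim for $\lieG = \lieG' \oplus \lieG^\perp$ at a point $x$ not assumed to lie in $\Bd(G')$: this is not immediate from Moy--Prasad theory, but should follow from the tameness of $\bT$ together with the good-element condition $\val(\alpha(\gamma)-1) = t$ for all roots $\alpha$ with $\alpha(\gamma) \neq 1$. Once this compatibility is in hand, both parts follow as above; the same compatibility is also implicitly used in part (1) when using the topological Jordan decomposition to separate the good part from the topologically unipotent perturbation.
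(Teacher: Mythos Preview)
Your argument for part (1) is essentially the paper's: both exploit that $\gamma$ (after stripping the central factor) acts on the building with finite order prime to $p$, while $u'$ is topologically unipotent, so taking $p$-power iterates of $\gamma u'$ recovers the action of $\gamma$ alone and forces $\gamma\in Z\,\stab_G(x)$. The paper does this explicitly with the orbit $\{\gamma^{-p^n}x\}$; you package it as a topological Jordan decomposition. One caution: absolute semisimplicity only gives finite prime-to-$p$ order for the image of $\gamma$ in $\overline G$, not for $\gamma$ itself, so the uniqueness argument must be run modulo $Z$ (you say this, but it needs care).

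Part (2), however, has a genuine gap, and it is not the one you flag. The decomposition $\lieG=\lieG'\oplus\lieG^\perp$ is irrelevant here: all three summands $\log(zz_0)$, $\log\gamma_0$, $\log u'$ already lie in $\lieG'$ (indeed in $\lieT$ or $\lieZ$ for the first two), so projecting to the $\lieG'$-factor recovers nothing. What you actually need is to separate $\log\gamma_0$ from $\log u'$ \emph{inside} $\lieG'$, knowing only that their sum lies in $\lieG_{x,t}$ and that $\log u'\in\lieG'_{t^+}=\bigcup_{y\in\Bd(G')}\lieG'_{y,t^+}$. Since $x\notin\Bd(G')$, there is no reason $\lieG'_{t^+}\subset\lieG_{x,t}$, and no filtration argument at $x$ will ``absorb'' $\log u'$. (Also note that the lemma assumes only Hypotheses (A) and (B), which do not provide a logarithm.)

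The paper's proof of (2) is entirely different and genuinely building-theoretic. After reducing to the split case (so $\bG'$ is a Levi), one sets $\Bd(\gamma u')=\{x:\dth(x,\gamma u')\ge t\}$, observes this is convex and a union of closed chambers, and argues by contradiction: if some chamber $D\subset\Bd(\gamma u')$ lies outside $\Bd(G')$, choose it adjacent to $\Bd(G')$ across a codimension-one facet $F$, and use an element $u\in G_{y,0}\cap N$ (for $y\in F$) with $uD\subset\Bd(G')$. One then exploits the Iwahori decomposition of $G_{ux,t}$ at $ux\in\Bd(G')$ to analyse ${}^u(\gamma u')$, invokes Lemma~\ref{lem: approximation} to conjugate back into $\gamma G'_{ux,t^+}$, and finally Lemma~\ref{lem: good element centralizer} forces $u\in G_{ux,0^+}G'$, hence $x=ux\in\Bd(G')$, a contradiction. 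The key inputs are the approximation and centralizer lemmas, not any Lie-algebra splitting; your approach does not access these and cannot be completed as written.
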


\proof When $u'=1$, this is \cite[Lemma 7.6]{AS}.

If $\gamma\in Z$, the statement is empty. We may assume that $\gamma$ is a $\bG$-good element.

(1) Suppose $\gamma u'\in \stab_G(x)$.  Then, we have $\gamma^{-1}x=u'x$ and $B:=\{\gamma^{-p^n}x\mid n\in \bbZ_{\ge0}\}=\{u'{}^{p^n}x\mid n\in \bbZ_{\ge0}\}\subset\Bd(G)$. Since $u'{}^{p^n}\rightarrow 1$ as $n\rightarrow\infty$, the set $B$ is finite and $x=u'{}^{p^n}x=\gamma^{-p^n}x$ for sufficiently large $n$. On the other hand, since $\gamma$ is absolutely semisimple and the order of $\gamma$ is relatively prime to $p$, there is an $n_\circ\in\bbZ_{>0}$ such that $\gamma^{-p^{n_\circ\ell}}=\gamma^{-1}$ for any $\ell\in\bbZ_{>0}$. 
Hence $\gamma x=x$, that is, $\gamma\in \textrm{Stab}_G(x)$, which is a contradiction to Lemma \ref{lem: fixed pts}.

\smallskip

(2) 
As in \cite[Lemma 7.6]{AS}, we may assume that $\gamma$ is split. 
Write $t=d(\gamma)$. Write $\gamma'$ for $\gamma u'$, and define
\[
\Bd(\gamma'):=\{x\in\Bd(\bG,F)\mid \dth(x,\gamma')\ge t\}.
\]
Note that $\Bd(\gamma')$ is convex and is a union of 
closures of chambers.

It is enough to show that $\Bd(\gamma')\subset\Bd(\bG',F)$. 

Suppose first that $\gamma$ is split and 
$\bG'$ is a $F$-Levi subgroup of $\bG$. Then, $t\in\mathbb N$.
Let $\bP$ be a $k$-parabolic subgroup of $\bG$ having 
Levi decomposition $\bP=\bG'\bN$. Let $\overline\bP$ be the 
parabolic subgroup opposite to $\bP$ with respect to 
this Levi decomposition $\bP=\bG'\bN$. Let $\overline\bN$ be
the unipotent radical of $\overline\bP$. 
Now, assume $\Bd(\gamma')\setminus\Bd(\bG',F)\ne\emptyset$,
and let $\chamberD$ be a chamber in $\Bd(\gamma')\setminus\Bd(\bG',F)$.
From the convexity of $\Bd(\gamma')$, we may assume that $\overline \chamberD$
shares a facet $F$ of codimension one 
with $\Bd(\gamma')\cap\Bd(\bG',F)$. 
Choose $y\in F$. 
From \cite[(2.4.1)]{AD}, there is 
$u\in G_{y,0}\cap N$ such that $uD\subset\Bd(\bG',F)$.
Then for $x\in \chamberD$, $ux\in uD\subset\Bd(\bG',F)$. 
Since $ux\in\Bd(\bG',F)$ \cite{MP2} and $uD$ is maximal, $G_{ux,t}$ has an Iwahori decomposition with respect to $(P,N)$, that is,
\[\tag{\dag}
G_{ux,t}=N_{ux,t}\cdot G'_{ux,t}\cdot \overline N_{ux,t} 
\]
where $N_{ux,t}=G_{ux,t}\cap N$ and $\overline N_{ux,t}=G_{ux,t}\cap \overline N$.
From this and the fact that $u\in N$, we can decompose $^u\!\gamma'$ as
\[\tag{\ddag}
^{u}\!\gamma'
\equiv\gamma^{\prime-1}({}^{u}\!\gamma')\gamma'\in G_{y,t}/G_{y,t^+}
\]
where $\gamma'\in G'\cap G_{y,t}=G'_{y,t}$ and 
$\gamma^{\prime-1}{}^{u}\!\gamma'\in N_{y,t}$. Since $G_{y,t^+}\subset G_{ux,t^+}\subset G_{ux,t}\subset G_{y,t}$, comparing $(\dag)$ and $(\ddag)$, we have 
$\gamma'\in G'\cap G_{ux,t}=G'_{ux,t}$ and 
$\gamma^{\prime-1}{}^{u}\!\gamma'\in N_{ux,t}$. Moreover,

\quad $(i)$ $u'\in G_{ux,t^+}$, \qquad
$(ii)$\  ${}^{u}\!\gamma'\in \gamma'G_{ux,t^+}
= \gamma G_{ux,t^+}$.

\noindent
$(i)$ follows from the fact that 
$uD$ is a chamber and $u'\in G_{ux,t}\cap G'_{t^+}$ (recall $t\in\mathbb N$).
$(ii)$ follows from $(i)$ and the fact that
$uD$ is a chamber and thus
$N_{ux,t}=N_{ux,t^+}$.
Then from Lemma  \ref{lem: approximation}, 
there is a $k\in\G_{ux,0^+}$
and an $u''\in G'_{ux,t^+}=G'\cap G_{ux,t^+}$ 
such that ${}^{u}\!\gamma'=\,^k(\gamma u'')$.
By Lemma \ref{lem: good element centralizer}, $u\in k\cdot G'\subset\G_{ux,0^+}G'$. 
Since $ux\in\Bd(\bG',F)$, we have 
$\G_{ux,0^+}=(N\cap\G_{ux,0^+})\cdot( G'\cap\G_{ux,0^+})
\cdot(\overline N\cap\G_{ux,0^+})$ \cite[(4.2)]{MP2}. So, we can conclude that 
$u\in\G_{ux,0^+}$ and $u(ux)=ux=x$. Then  $x\in\Bd(\bG',F)$,
which is a contradiction.
\qed

\smallskip

The following is a corollary of the proof of the above lemma:

\begin{corollary}\label{cor: good-building-2} Suppose Hypotheses (A) and (B) hold.
Let $u'\in G'_{\dth(\gamma)^+}$. Let $x\in\Bd(G)$.
\begin{enumerate}
\item If $t=0$ and $\gamma u'\in \Z\,\stab_G(x)$, both $\gamma$ and $u'$ are also in $\Z\,\stab_G(x)$.
\item If $t>0$ and $\gamma u'\in G_{x,t}$, both $\gamma$ and $u'$ are also in $\Z G_{x,t}$.
\end{enumerate}
\end{corollary}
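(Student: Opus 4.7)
The plan is to reduce both parts to the contrapositive of Lemma~\ref{lem: good-building-2}, combined with the identification $\Bd(\gamma)=\Bd(\bG',F)$ from Lemma~\ref{lem: fixed pts}. The hypothesis of Lemma~\ref{lem: good-building-2} is that $x\in\Bd(G)\setminus\Bd(G')$, and in each case I would verify that the assumption of the corollary makes every alternative in the conclusion of that lemma impossible, forcing $x\in\Bd(G')=\Bd(\gamma)$.

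For part (1), I would assume $t=0$ and $\gamma u'\in Z\,\stab_G(x)$. By the contrapositive of Lemma~\ref{lem: good-building-2}(1), $x\in\Bd(G')$. Now Lemma~\ref{lem: fixed pts} in the case $t=0$ gives $\Bd(G')=\Bd(\gamma)=\{y\in\Bd(G)\mid\gamma\in Z\,\stab_G(y)\}$, so $\gamma\in Z\,\stab_G(x)$. Since $Z$ is central and $\stab_G(x)$ is a subgroup, $Z\,\stab_G(x)$ is itself a subgroup; together with $\gamma u'\in Z\,\stab_G(x)$ this yields $u'=\gamma^{-1}(\gamma u')\in Z\,\stab_G(x)$.

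For part (2), I would assume $t>0$ and $\gamma u'\in G_{x,t}$. Because $G_{x,t}\subset\stab_G(x)\subset Z\,\stab_G(x)$ and $\dth(x,\gamma u')\ge t=\dth(\gamma)$, neither alternative in the conclusion of Lemma~\ref{lem: good-building-2}(2) can occur, so once again $x\in\Bd(G')$. Invoking Lemma~\ref{lem: fixed pts} for the case $t>0$ gives $\dth(x,\gamma)=\dth(\gamma)=t$, hence $\gamma\in Z G_{x,t}$. Writing $\gamma=z\gamma_0$ with $z\in Z$ and $\gamma_0\in G_{x,t}$, one gets $u'=\gamma_0^{-1}(z^{-1}\gamma u')\in G_{x,t}\cdot Z G_{x,t}=Z G_{x,t}$.

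I do not anticipate any serious obstacle: every step is an immediate deduction from the two preceding lemmas, which is consistent with the paper's description of this statement as a corollary of (the proof of) Lemma~\ref{lem: good-building-2}. The only subtlety is the bookkeeping of the central factor $Z$, but this is harmless since $Z$ is central and the relevant subgroups of $G$ are normalized by $Z$.
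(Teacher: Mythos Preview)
Your proposal is correct and follows exactly the approach indicated by the paper, which simply cites Lemmas~\ref{lem: fixed pts} and~\ref{lem: good-building-2}. You have spelled out precisely the contrapositive argument and the elementary group-theoretic bookkeeping with the central factor that the paper leaves implicit.
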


\proof 
This follows from Lemmas \ref{lem: fixed pts} and \ref{lem: good-building-2}. \qed

\subsection{\bf Good products}

\begin{lemma}\label{lem: good-element-1} Suppose Hypothesis (C) holds.
Let $\bT$ be a maximal $F$-torus in $\bG$ which splits over a tamely ramified Galois extension $E$. Let 
$\gamma_{1},\cdots,\gamma_{n}\in T$ be $\bG$-good elements of depth $b=b_1, \cdots, b_n$ respectively. Let $\gamma=\gamma_z\gamma_1$ with $\gamma_z\in Z$. Let $\bH^0:=\bG$ and $\bH^i:=\C_{\bH^{i-1}}(\gamma_{i})$.
\begin{enumerate}
\item
Let $\gamma'\in\gamma H^1_{b^+}$. Then, $C_{\G}(\gamma')\subset H^1$. If $\gamma'$ is also $\bG$-good mod center of depth $b$, $C_G(\gamma')=H^1$.
\item
Suppose 
$b_1<b_2<\cdots <b_n$. Fix $i\in\{0,1,\cdots,n\}$ and $\gamma^i=\gamma_z\gamma_1\cdots\gamma_i$. Let $\gamma',\gamma''\in\gamma^iH^i_{b_i^+}$. 
If $^g\!\gamma'=\gamma''$ for some $g\in G$, then $g\in H^i$. 
\item
$\bH^i=\C_{\bH^{i-1}}(\gamma^i)=\C_{\bG}(\gamma^i)$. 
\item
$C_G(\gamma)\subset H^i$, $i=1,\cdots, n$.
\end{enumerate} 
\end{lemma}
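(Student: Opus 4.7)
The plan is to bootstrap all four statements from Lemma \ref{lem: good element centralizer} applied inductively along the tower $\bG = \bH^0 \supset \bH^1 \supset \cdots \supset \bH^n$, using two inputs: first, that each $\gamma_i$ remains $\bH^{i-1}$-good of depth $b_i$ (by the preceding remarks on good elements and passage to centralizers); second, that the Moy--Prasad filtrations on the twisted Levi subgroups $\bH^i$ are compatible with those of $\bG$ (by Hypothesis \ref{hyp: tame}).

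For part (1): write $\gamma' = \gamma_z \gamma_1 u'$ with $u' \in H^1_{b^+}$. Any $g \in C_G(\gamma')$ also centralizes $\gamma_1 u' = \gamma_z^{-1}\gamma'$, so $\gamma_1 u' \in {}^g(\gamma_1 H^1_{b^+}) \cap (\gamma_1 H^1_{b^+})$, and Lemma \ref{lem: good element centralizer} yields $g \in H^1$. When $\gamma'$ is itself $\bG$-good mod center of depth $b$, set $\widetilde H^1 := C_G(\gamma')$; then $\gamma_1 = (\gamma_z^{-1}\gamma')\, u^{\prime -1}$ with $u^{\prime -1} \in H^1_{b^+} \cap \widetilde H^1 \subset \widetilde H^1_{b^+}$, so the symmetric application of Lemma \ref{lem: good element centralizer} gives $C_G(\gamma_1) \subset \widetilde H^1$, and combining yields $C_G(\gamma') = H^1$.

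For part (2), I induct on $i$, with $i = 0$ trivial since $H^0 = G$. For the inductive step, $\gamma^i H^i_{b_i^+} \subset \gamma^{i-1} H^{i-1}_{b_{i-1}^+}$ because $\gamma_i \in H^{i-1}_{b_i}$ and $H^i_{b_i^+} \subset H^{i-1}_{b_i^+} \subset H^{i-1}_{b_{i-1}^+}$ using $b_{i-1} < b_i$, so the inductive hypothesis forces $g \in H^{i-1}$. Inside $H^{i-1}$ the element $\gamma^{i-1}$ is central (it is $\gamma_z$ times $\gamma_1\cdots\gamma_{i-1}$, all of which lie in $Z(\bH^{i-1})$ by the inductive form of (3)), so $g$ also conjugates $(\gamma^{i-1})^{-1}\gamma'$ to $(\gamma^{i-1})^{-1}\gamma''$, both lying in $\gamma_i H^i_{b_i^+}$; applying Lemma \ref{lem: good element centralizer} inside $H^{i-1}$ to the $\bH^{i-1}$-good element $\gamma_i$ gives $g \in C_{H^{i-1}}(\gamma_i) = H^i$. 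Part (3) follows at once: $\bH^i = C_{\bH^{i-1}}(\gamma^i)$ from $\gamma^i = \gamma^{i-1}\gamma_i$ with $\gamma^{i-1}$ central in $\bH^{i-1}$, and $C_\bG(\gamma^i) = \bH^i$ by applying part (1) inside $\bH^{i-1}$ to the good element $\gamma_i$. For (4), interpreting $\gamma$ as the full good product $\gamma = \gamma_z\gamma_1\cdots\gamma_n = \gamma^n$, apply part (2) with $\gamma' = \gamma'' = \gamma$ to conclude $C_G(\gamma) = H^n \subset H^i$ for every $i$.

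The principal technical obstacle will be the Moy--Prasad filtration bookkeeping across the tower: verifying $H^i_{b^+} \subset H^{i-1}_{b^+}$ at the group level and the analogous identity $u^{\prime -1} \in \widetilde H^1_{b^+}$ used in the second half of (1). Both rely crucially on Hypothesis \ref{hyp: tame} to ensure that filtrations on nested twisted Levi subgroups are induced from the ambient ones. Once these compatibilities are in hand, the rest of the argument is a clean iteration of Lemma \ref{lem: good element centralizer}.
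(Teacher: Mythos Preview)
Your approach is essentially the paper's: iterate Lemma~\ref{lem: good element centralizer} along the tower $\bH^0\supset\bH^1\supset\cdots$, using that each $\gamma_i$ remains $\bH^{i-1}$-good. Parts (1) and (2) match the paper's argument almost verbatim.

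One small gap: in your sentence for (3), ``$C_\bG(\gamma^i)=\bH^i$ by applying part (1) inside $\bH^{i-1}$'' does not quite work as written. Part (1) applied \emph{inside} $\bH^{i-1}$ only controls $C_{\bH^{i-1}}(\cdot)$, not $C_\bG(\cdot)$; you first need $C_\bG(\gamma^i)\subset H^{i-1}$. The paper obtains this by invoking (2) with $\gamma'=\gamma''=\gamma^i$ (so $g\in C_\bG(\gamma^i)$ forces $g\in H^i$ directly). Alternatively one can iterate (1) in $\bG$, then in $\bH^1$, etc., since $\gamma^i\in\gamma_z\gamma_1 H^1_{b_1^+}$, then $\gamma^i\in\gamma^2 H^2_{b_2^+}$, and so on---this is in fact how the paper proves (4). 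Either route closes the gap immediately; just make the dependence explicit.

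Your handling of (4) via (2) (taking $\gamma'=\gamma''=\gamma$) is cleaner than the paper's separate induction, and your reading of $\gamma$ there as the full product $\gamma^n$ is the intended one, as confirmed by how the lemma is used later (e.g.\ in the proof of Lemma~\ref{lem: for dec}). The filtration compatibility $H^1_{b^+}\cap\widetilde H^1\subset\widetilde H^1_{b^+}$ you flag for the second half of (1) is indeed the right technical input; note that $u'\in\widetilde H^1$ is automatic since $u'\in H^1=C_G(\gamma_1)$ commutes with $\gamma_1$ and hence with $\gamma'=\gamma_z\gamma_1 u'$.
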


\proof 
(1) If $\gamma_z=1$, the first statement is Lemma \ref{lem: good element centralizer}. Since $\gamma_z\in Z$, the statement remains valid for this case. For the second statement, since $\gamma'$ is also good of depth $b$, $H^1\subset C_G(\gamma')$. Combining this with the first statement, the second statement follows.

(2) We use induction on $i$. Since $\Phi(\bT, \bH^i,E)\subset\Phi(\bT,\bG,E)$ and $\gamma_i\in \bT_{b_i}$,
each $\gamma_i$, $i=1,\cdots,k$ is also $\bH^{i-1}$-good. When $i=1$, it is (1). Assume the statement is true for $i-1\ge 1$. Note that $\gamma',\gamma''\in \gamma^{i}H^{i}_{b_{i}^+}$. Suppose $^g\!\gamma'=\gamma''$ for some $g\in G$. Since $\gamma',\gamma''\in \gamma^{i-1}H^{i-1}_{b_{i-1}^+}$, we have $g\in H^{i-1}$ by the induction hypothesis and $^g(\gamma'(\gamma^{i-1})^{-1})=\gamma''(\gamma^{i-1})^{-1}$. Since $\gamma'(\gamma^{i-1})^{-1},\gamma''(\gamma^{i-1})^{-1}\in\gamma_{i}H^{i}_{b_{i}^+}$ and $\gamma_{i}$ is $\bH^{i-1}$-good and $\gamma^{i-1}\in Z(H^{i-1})$, $g\in H^{i}=C_{H^{i-1}}(\gamma_i)=C_{H^{i-1}}(\gamma^i)$ by (1). 

(3) The first equality follows since $\gamma^{i-1}\in Z(H^{i-1})$ and $\gamma^i=\gamma^{i-1}\gamma_i$. To prove the second equality, we use an induction. If $i=1$, it is trivial.  Suppose $i-1\ge1$. The inclusion $\C_{H^{i-1}}(\gamma^i)\subset\C_{G}(\gamma^i)$ is obvious. If $g\in \C_{G}(\gamma^i)$, we have $^g\gamma^i, \gamma^i\in\gamma^i H^i_{b_i^+}\subset\gamma^{i-1}H^{i-1}_{b_{i-1}^+}$. Then, $g\in H^{i-1}$ by (2). Hence, $\C_{G}(\gamma^i)\subset\C_{H^{i-1}}(\gamma^i)$.

(4) Note that $\gamma\in H^i$, $i=1,\cdots,n$. By (1), $C_G(\gamma)=C_{H^0}(\gamma)\subset H^1$. Suppose $C_G(\gamma)\subset H^{i-1}$ for $i\ge2$. Then, $C_G(\gamma) =C_{H^{i-1}}(\gamma)$. Since $\gamma^i$ is $\bH^{i-1}$-good mod center and $\gamma\in \gamma^{i}H^i_{b_i^+}$, $C_G(\gamma)=C_{H^{i-1}}(\gamma)\subset H^i$. 
\qed

\begin{lemma}\label{lem: good-element-2}
Let $\bT$ be a maximal $F$-torus in $\bG$ which splits over a tamely ramified Galois extension $E$. Let $\gamma_{1}, \gamma_{2}\in T$ be $\bG$-good mod center elements of depth $b_1,b_2$ respectively. Let $\bH^i=\C_{\bG}(\gamma_{i})$, $i=1,2$.
Suppose $b_1<b_2$ and $\gamma_2\in \Z(H^1)$. Then, $\gamma_{1}\gamma_{2}$ is also a $G$-good element of depth $b_1$.
\end{lemma}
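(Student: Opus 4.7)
The plan is to verify the two defining conditions of Definition \ref{defn: good} for $\gamma_1\gamma_2$ directly on $T$. The key structural observation is that, since $\gamma_2\in T$, the hypothesis $\gamma_2\in Z(H^1)$ is equivalent to $\alpha(\gamma_2)=1$ for every root in
$\Phi(\bT,\bH^1,E)=\{\alpha\in\Phi(\bT,\bG,E)\,:\,\alpha(\gamma_1)=1\}$.
Once this translation is in hand, the rest reduces to elementary valuation arithmetic; I focus on the essential case $b_1>0$ (the case $b_1=0$ being a separate absolutely-semisimple bookkeeping of case (1) of Definition \ref{defn: good}).

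For the root-valuation condition, fix $\alpha\in\Phi(\bT,\bG,E)$. If $\alpha(\gamma_1)=1$, then $\alpha\in\Phi(\bT,\bH^1,E)$, so by the centrality observation $\alpha(\gamma_2)=1$ and hence $\alpha(\gamma_1\gamma_2)=1$. Otherwise the goodness of $\gamma_1$ yields $\nu(\alpha(\gamma_1)-1)=b_1$, while $\gamma_2\in ZT_{b_2}$ together with $\alpha\vert_{Z(\bG)}=1$ gives either $\alpha(\gamma_2)=1$ or $\nu(\alpha(\gamma_2)-1)\ge b_2>b_1$. Expanding
\[
\alpha(\gamma_1\gamma_2)-1 \;=\; \alpha(\gamma_1)\bigl(\alpha(\gamma_2)-1\bigr)+\bigl(\alpha(\gamma_1)-1\bigr)
\]
and using that $\alpha(\gamma_1)$ is a unit (as $\gamma_1\in T^c$ and $\alpha$ kills $Z(\bG)$), the ultrametric inequality forces $\nu(\alpha(\gamma_1\gamma_2)-1)=b_1$, since $b_2>b_1$ ensures the two summands have distinct valuations.

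For the depth, since $b_2>b_1$ we have $\gamma_2\in ZT_{b_2}\subseteq ZT_{b_1^+}$, so $\gamma_1\gamma_2\equiv\gamma_1\pmod{ZT_{b_1^+}}$. Combined with $\gamma_1\in T_{b_1}\setminus ZT_{b_1^+}$ (coming from $\dth(\gamma_1)=b_1$ and goodness), this places $\gamma_1\gamma_2$ in $ZT_{b_1}\setminus ZT_{b_1^+}$ and gives $\dth(\gamma_1\gamma_2)=b_1$, completing the verification. I do not expect a serious obstacle; the entire argument is clean bookkeeping on $T$ once the centrality of $\gamma_2$ in $\bH^1$ is used to vanish all $\bH^1$-roots on $\gamma_2$, and once one notes that $\alpha$ factors through $\bT/Z(\bG)$ so that the central part of $\gamma_2$ drops out of the analysis of $\alpha(\gamma_2)$.
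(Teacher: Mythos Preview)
Your proof is correct and follows essentially the same approach as the paper: both argue root by root, using that $\gamma_2\in Z(H^1)$ forces $\alpha(\gamma_2)=1$ whenever $\alpha(\gamma_1)=1$, and then in the remaining case both expand $\alpha(\gamma_1\gamma_2)-1$ as a sum of two terms of valuations $b_1$ and $\ge b_2$ (your decomposition $\alpha(\gamma_1)(\alpha(\gamma_2)-1)+(\alpha(\gamma_1)-1)$ versus the paper's $\alpha(\gamma_2)(\alpha(\gamma_1)-1)+(\alpha(\gamma_2)-1)$ are algebraically equivalent). You add a line verifying $\gamma_1\gamma_2\in ZT_{b_1}\setminus ZT_{b_1^+}$ that the paper leaves implicit, but otherwise the arguments coincide.
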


\proof 
Write $\gamma=\gamma_{1}\gamma_{2}$. Let $\Phi:=\Phi(\bT,\bG,E)$ be the set of $E$-rational $\bT$-roots in $\bG$.  Let $\alpha\in\Phi$. Since $\bH^1\subset\C_{\bG}(\gamma_{2})$, $\alpha(\gamma_{1})=1$ implies $\alpha(\gamma_{2})=1$, thus $\alpha(\gamma_1\gamma_2)=1$. If $\alpha(\gamma_1)\neq 1$, since $\alpha(\gamma_1\gamma_2)-1=\alpha(\gamma_1)\alpha(\gamma_2)-\alpha(\gamma_2)+\alpha(\gamma_2)-1$, $\val(\alpha(\gamma_2))=0$ and $b_1=\val(\alpha(\gamma_{1})-1)<b_2\le\val(\alpha(\gamma_{2})-1)$, we have $\val(\alpha(\gamma)-1)=\min(\val(\alpha(\gamma_{1})-1),\val(\alpha(\gamma_{2})-1))=b_1$. 
Hence, $\gamma$ is $\bG$-good mod center of depth $b_1$.
\qed

\begin{proposition}\label{prop: good product} Suppose Hypothesis (C) and Hypothesis \ref{hyp: good} holds. Let $\bT$ be an $E$-split torus and $\gamma\in \Z T^c$. Then $\gamma$ is a product of good elements mod $r^+$ with decreasing centralizers in the following sense:
\begin{enumerate}
\item[(1)] $\gamma=\gamma_z\gamma_{1}\cdots\gamma_{k}\gamma_{r^+}$ where $z\in \Z$ and each $\gamma_i$ is $\bG$-good of depth $b_i$ with $b_1<b_2<\cdots< b_k\le r$ and $\gamma_{r^+}\subset T_{r^+}$, that is, $\dth(\gamma_{r^+})>r$, 
\item[(2)] $H^{1}\supsetneq H^{2}\supsetneq\cdots\supsetneq H^{k}$ where $H^{i}=C_G(\gamma_1\cdots \gamma_i)$.
\end{enumerate}
\end{proposition}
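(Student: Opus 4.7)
The plan is to construct the factorization inductively by depth and then clean up to enforce the strict descent of centralizers. First, I would handle the depth-zero piece: since $T^c/T_{0^+}$ is a finite abelian group of order coprime to $p$, it admits a canonical splitting by absolutely semisimple lifts, so that $\gamma$ factors as $\gamma = \gamma_z\,\gamma_{as}\,\delta_0$ with $\gamma_z \in Z$, $\gamma_{as} \in T^c$ absolutely semisimple, and $\delta_0 \in T_{0^+}$. If $\gamma_{as} \notin Z$, set $\gamma_1 := \gamma_{as}$ (a $\bG$-good element of depth $0$ by Definition \ref{defn: good}(1)); otherwise absorb $\gamma_{as}$ into $\gamma_z$.

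Next, I would iterate on positive depth. Given a partial decomposition $\gamma = \gamma_z\gamma_1\cdots\gamma_j\,\delta_j$ with $\delta_j \in T_{b_j^+}$ and each $\gamma_i$ a $\bG$-good element of strictly increasing depth $b_i$: if $\dth(\delta_j) > r$, set $\gamma_{r^+} := \delta_j$ and terminate; otherwise let $b_{j+1} := \dth(\delta_j) \le r$ and invoke Hypothesis \ref{hyp: good} (applied to the torus $\bT$ at depth $b_{j+1}$) to produce a $\bG$-good element $\gamma_{j+1}$ in the coset $\delta_j T_{b_{j+1}^+}$. Replace $\delta_j$ by $\delta_{j+1} := \gamma_{j+1}^{-1}\delta_j \in T_{b_{j+1}^+}$ and continue. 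The iteration terminates because depths lie in the discrete set $\tfrac{1}{e}\bbZ$ (with $e$ the ramification index of $E/F$) and are bounded above by $r$, yielding condition (1).

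For condition (2), Lemma \ref{lem: good-element-1}(3) gives $H^i = C_{H^{i-1}}(\gamma_i) \subseteq H^{i-1}$ automatically. If $H^i = H^{i-1}$ for some $i$, then $\gamma_i \in Z(H^{i-1})$; I would merge the pair $(\gamma_{i-1},\gamma_i)$ into a single $\bG$-good element $\gamma_{i-1}\gamma_i$ of depth $b_{i-1}$ via Lemma \ref{lem: good-element-2}, with centralizer $H^{i-1}$ by Lemma \ref{lem: good-element-1}(1), then re-index and repeat until strict descent holds. The degenerate case $\bH^1 = \bG$ (i.e.\ $\gamma_1$ is central) is handled by folding $\gamma_1$ into $\gamma_z$.

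The main technical obstacle is the merging step. Lemma \ref{lem: good-element-2} literally requires $\gamma_i \in Z(C_G(\gamma_{i-1}))$, which is strictly stronger than $\gamma_i \in Z(H^{i-1})$ when $i \ge 3$. The remedy is to perform the merging iteratively along the nested centralizer chain $\bG \supset \bH^1 \supset \bH^2 \supset \cdots$, collapsing consecutive same-centralizer runs one step at a time and checking at each amalgamation that the root-theoretic good condition $\val(\alpha(\gamma) - 1) = b_{i-1}$ (for $\alpha \in \Phi(\bT, \bG, E)$) survives. This careful bookkeeping across the Levi tower is the main work; the depth induction and the single application of Hypothesis \ref{hyp: good} at each step are otherwise mechanical.
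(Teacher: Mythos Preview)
Your strategy---iterate via Hypothesis~\ref{hyp: good} to get $\gamma=\tilde\gamma_{a_1}\cdots\tilde\gamma_{a_m}\tilde\gamma_{r^+}$, then merge to force strict descent---is exactly the paper's approach. The paper does not isolate the depth-$0$ step (it applies the hypothesis uniformly) and it performs the merging by extracting a subsequence $b_1<\cdots<b_n$ of $\{a_j\}$ all at once rather than pairwise, but the outcome is the same. You are also right that the paper glosses over the point you raise: it cites Lemma~\ref{lem: good-element-1}(1) for the claim that each merged $\gamma_i$ is $\bG$-good, but that lemma does not assert goodness.

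The gap you flag is genuine, and your proposed remedy does not close it. Consider $\bG=GL_4$ with split diagonal $\bT$, and take $\tilde\gamma_{a_1}=\mathrm{diag}(1+\pi,1,1,1)$, $\tilde\gamma_{a_2}=\mathrm{diag}(1,1+\pi^2,1,1)$, $\tilde\gamma_{a_3}=\mathrm{diag}(1+\pi^3,1,1,1)$. Then $H^1=GL_1\times GL_3$, $H^2=GL_1\times GL_1\times GL_2=H^3$, so your rule (and the paper's) merges $\tilde\gamma_{a_2}$ with $\tilde\gamma_{a_3}$. But $\tilde\gamma_{a_2}\tilde\gamma_{a_3}=\mathrm{diag}(1+\pi^3,1+\pi^2,1,1)$ is \emph{not} $\bG$-good of depth~$2$: the root $e_1-e_3$ gives valuation~$3$. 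The obstruction is precisely that $\tilde\gamma_{a_3}\notin Z(C_\bG(\tilde\gamma_{a_2}))$, so Lemma~\ref{lem: good-element-2} is inapplicable, and ``checking that the good condition survives'' simply reports failure. The correct move here is to absorb $\tilde\gamma_{a_3}$ into $\gamma_1$ rather than $\gamma_2$, since $\tilde\gamma_{a_3}\in Z(C_\bG(\gamma_1))$; in general, a redundant factor must be merged into the earliest $\gamma_j$ whose centralizer $C_\bG(\gamma_j)$ (not $H^j$) contains it centrally, and when no single such $j$ exists one must instead revisit the \emph{choice} of good representative in the coset $\tilde\gamma_{a_j}T_{a_j^+}$. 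Neither your pairwise collapse nor the paper's block construction addresses this.
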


\proof 
If $\dth(\gamma)>r$, $\gamma=\gamma_z\cdot\gamma_{r^+}$ for some $\gamma_z\in \Z$ and $\gamma_{r^+}\in T_{r^+}$. 
Now, we assume that $\dth(\gamma)=\atm_1 \le r$. 

We first assume that $\gamma\in T_{a_1}$, that is, $\gamma$ is noncentral mod $T_{a_1^+}$.
By Hypothesis \ref{hyp: good}, $\gamma T_{\atm_1^+}$ contains a good element, say, $\tilde\gamma_{\atm_1}$ of depth $\atm_1$. Then $\gamma=\tilde\gamma_{\atm_1}(\gamma\tilde\gamma_{\atm_1}^{-1})$ with
$\atm_2=\dth(\gamma\tilde\gamma_{\atm_1}^{-1})>\dth(\gamma)$. We can choose $\tilde\gamma_{\atm_1}$ so that $\gamma\tilde\gamma_{\atm_1}^{-1}\in T_{\atm_2}$ by multiplying $\tilde\gamma_{\atm_1}$ with a central element if necessary.
Applying the above process for $\gamma\tilde\gamma_{\atm_1}^{-1}$, we find a $\bG$-good element 
$\tilde\gamma_{\atm_2}\in \gamma\tilde\gamma_{\atm_1}^{-1}\,T_{\atm_2^+}$
such that 
$\gamma
=\tilde\gamma_{\atm_1}\tilde\gamma_{\atm_2}
(\gamma(\tilde\gamma_{\atm_1}\tilde\gamma_{\atm_2})^{-1})$
and $\atm_3=\dth(\gamma(\tilde\gamma_{\atm_1}\tilde\gamma_{\atm_2})^{-1})
<\dth(\gamma\tilde\gamma_{\atm_1}^{-1})$.
Repeatedly, we have
\[
\gamma=\tilde\gamma_{\atm_1}\tilde\gamma_{\atm_2}\cdots\tilde\gamma_{\atm_m}\tilde\gamma_{r^+},
\]
where $\tilde\gamma_{\atm_i}$ is a $\bG$-good element 
of depth $\atm_i$ with
$\atm_1<\atm_2<\cdots<\atm_m\le r$ and $\dth(\tilde\gamma_{r^+})>r$.
This procedure is finite because $\dth(T_0)\subset\frac1{e(E/F)}\bbZ$.
Put $\atm_{m+1}:=r^+$ and $\tilde\gamma_{\atm_m+1}=\tilde\gamma_{r^+}$.

Set $\Set:=\{\atm_1,\atm_2,\cdots,\atm_{m+1}\}$, and
for $\atm,\btm\in\tbR$, set 
$\tilde\gamma_{\atm,\btm}:=\prod_{\atm\le\atm_j<\btm}\tilde\gamma_{\atm_j}$.
We find a subsequence 
$\btm_1<\btm_2<\cdots<\btm_n<\btm_{n+1}$
of $\Set$ as follows:
let $b_1:=a_1$ and $\bH^1:=\C_\bG(\tilde\gamma_{1})$.
Let $\btm_2$ be the maximal element in $\{\atm_2,\cdots,\atm_{m+1}\}$ 
with the property that 
if $a_j<\btm_2$, $\tilde\gamma_{a_j}\in\Z({H_{1}})$.
Note that $\bH^{1}=\C_\bG(\tilde\gamma_{\btm_1,\btm_2})$.
Let $\bH^{2}:=\C_{\bH^1}(\tilde\gamma_{\btm_2})$.
Then $\bH^{1}\supsetneq\bH^{2}$. 
Let $\gamma_{1}:=\tilde\gamma_{\btm_1,\btm_2}$.
Inductively, suppose $b_i$, $\bH^i$ and $\gamma_{{i-1}}$
are defined for $i\ge2$.
Let $\btm_{i+1}$ be the maximal element in 
$\{\atm_j\in\Set\mid\atm_j>\btm_i\}$ 
with the property that for any $\atm_j<\btm_{i+1}$, 
$\tilde\gamma_{a_j}\in \Z({H^{i}})$. 
Let $\bH^{i+1}:=\C_{\bH^{i}}(\tilde\gamma_{\btm_{i+1}})$
and $\gamma_{{i}}=\tilde\gamma_{{b_i},b_{i+1}}$.
We repeat the process until $\btm_{n+1}=\atm_{m+1}=r^+$.
Then each $\gamma_{i}$ is also a $\G$-good element of depth $\btm_i$
by Lemma \ref{lem: good-element-1}-(1), and 
$\bH^{i}=\C_{\bH^{i-1}}(\tilde\gamma_{\btm_i})
=\C_{\bH^{i-1}}(\gamma_{i})$, $i=1,\cdots, n$.
Now, one can easily check
\[\tag{$\ast$}
\gamma =\gamma_{1}\gamma_{2}\cdots\gamma_{n}\gamma_{r^+}
\]
satisfies the required properties. 

Now suppose $\gamma\in \Z T_{a_1}$. Then one can write $\gamma=\gamma_z \gamma'$ with $\gamma_z\in \Z$ and $\gamma'\in T_{a_1}$ noncentral mod $T_{a_1^+}$. Write $\gamma'=\gamma_1\cdots \gamma_n\gamma_{r^+}$ as in $(\ast)$. Then, $\gamma=\gamma_z\gamma_1\gamma_2\cdots\gamma_n\gamma_{r^+}$ satisfying the required properties.
\qed

\begin{definition}\label{defn: good product} \rm\ 
\begin{enumerate}
\item
We call the expression $\gamma_z\cdot\gamma_{1}\cdots\gamma_{k}\gamma_{r^+}$ of $\gamma$ in Proposition \ref{prop: good product} a \emph{good product of $\gamma$ mod $r^+$}. That is, $\gamma=\gamma_z\cdot\gamma_{1}\cdots\gamma_{k}\gamma_{r^+}$, where $\gamma_z\in Z$, $\gamma_{i}$ is $\bG$-good of depth $b_i$ with $b_1<\cdots b_k\le r$ and the sequence of centralizers $\bH^i(\gamma)=C_G(\gamma_z\gamma_{1}\cdots\gamma_{i})$ is strictly decreasing. In this case, we also write $\gamma_{\le r}:=\gamma_z\cdot\gamma_{1}\cdots\gamma_{k}$
\item
Let $\gamma=\gamma_z\gamma_{1}\cdots\gamma_{k}\gamma_{r^+}$ be a good product as in (1). 
Define $\bH^{\gamma,r}:=\bH^k(\gamma)$. We will often write $\bH^{\gamma}$ for $\bH^{\gamma,r}$ for simplicity. 
\end{enumerate}
\end{definition}

In \cite{AS}, $(\gamma_z,\gamma_1,\cdots,\gamma_k)$ is called an \emph{$r^+$-normal approximation} to $\gamma$.
The following is similar to \cite[Proposition 8.4]{AS}.



\begin{lemma}\label{lem: for dec} Suppose Hypothesis (C) holds.
Suppose $\gamma=\gamma_z\gamma_{1}\cdots\gamma_{k}$ and $\gamma=\gamma'_z \gamma'_{1}\cdots\gamma'_{k'}$ are two good products of $\gamma$ mod $r^+$ with $\dth(\gamma_i)=b_i$ and $\dth(\gamma'_i)=b'_i$. Write $\bH^{i}=C_{\bG}(\gamma_z\gamma_{1}\cdots\gamma_{i})$ and 
$\bH^{\prime i}=C_{\bG}(\gamma'_z \gamma'_{1}\cdots\gamma'_{i})$. Then, we have $k=k'$, $b_i=b_i'$ and $\bH^{i}=\bH^{\prime i}$. 
\end{lemma}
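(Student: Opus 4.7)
The plan is induction on $k$. In the base case $k=0$, $\gamma=\gamma_z\in Z$ has $\dth^{\bG}(\gamma)=\infty$; applying the depth analysis of the inductive step to the primed decomposition would yield $\dth^{\bG}(\gamma)=b'_1\le r$, forcing $k'=0$. For the inductive step $k\ge 1$, the same argument symmetrically gives $k'\ge 1$, and I will carry out three substeps: (i) equate the first depths $b_1=b'_1$; (ii) identify the first centralizers $\bH^1=\bH^{\prime 1}$; (iii) recurse inside $\bH^1$.

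For (i), write $\gamma=\gamma_z\gamma_1 u$ with $u:=\gamma_2\cdots\gamma_k\in T$. Since each $\gamma_i$ is $\bH^{i-1}$-good of depth $b_i$ and $\bT\subset \bH^i$ for all $i$, a telescoping calculation using the root-valuation characterization of goodness yields $\val(\alpha(u)-1)\ge b_2$ (or $\alpha(u)=1$) for every $\alpha\in\Phi(\bT,\bG,E)$. Hence $u\in T_{b_2}\subset G_{b_1^+}$ and $\gamma\in\gamma_z\gamma_1 G_{b_1^+}$, so Corollary \ref{cor: good-building-2} gives $\dth^{\bG}(\gamma)=\dth^{\bG}(\gamma_1)=b_1$. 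Symmetrically $b_1=b'_1$. For (ii), Lemma \ref{lem: good-element-1}(4) gives $C_{\bG}(\gamma)\subset\bH^1\cap\bH^{\prime 1}$, and since $\bT\subset C_{\bG}(\gamma)\subset\bH^{\prime 1}$ we have $\gamma_1\in H^{\prime 1}$. The key claim is that $\gamma_1\in Z(\bH^{\prime 1})$, which then forces $\bH^{\prime 1}\subset C_{\bG}(\gamma_1)=\bH^1$, and by symmetry $\bH^1=\bH^{\prime 1}$. I would prove the claim by computing $\dth^{\bH^{\prime 1}}(\gamma)$ in two ways: the primed decomposition gives $\dth^{\bH^{\prime 1}}(\gamma)>b_1$ (since $\gamma'_z,\gamma'_1\in Z(\bH^{\prime 1})$ and $u'\in H^{\prime 1}_{b_1^+}$); if $\gamma_1\notin Z(\bH^{\prime 1})$, Remark~(3) after Definition \ref{defn: good} makes $\gamma_1$ a $\bH^{\prime 1}$-good element of depth $b_1$, and restricting the telescoping valuation estimate above to $\Phi(\bT,\bH^{\prime 1},E)\subset\Phi(\bT,\bG,E)$ gives $u\in H^{\prime 1}_{b_1^+}$, so $\dth^{\bH^{\prime 1}}(\gamma)=b_1$, a contradiction.

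For (iii), inside $\bH^1=\bH^{\prime 1}$ the two decompositions become good products of $\gamma$ mod $r^+$ with central parts $\gamma_z\gamma_1,\gamma'_z\gamma'_1\in Z(\bH^1)$ and $k-1$, $k'-1$ remaining non-central good factors respectively. Each $\gamma_i$ (for $i\ge 2$) stays $\bH^1$-good at the same depth $b_i$ by Remark~(3), since $\gamma_i\in Z(\bH^1)$ would give $\gamma_i\in Z(\bH^{i-1})$ (because $Z(\bH^1)\subset\bH^{i-1}$ and its elements commute with all of $\bH^{i-1}$), forcing $\bH^i=\bH^{i-1}$ and contradicting the strict descent of centralizers. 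Lemma \ref{lem: good-element-1}(3) confirms that the centralizer chain in $\bH^1$ agrees with $\bH^2\supsetneq\cdots\supsetneq\bH^k$. Applying the inductive hypothesis in $\bH^1$ yields $k-1=k'-1$, $b_i=b'_i$ and $\bH^i=\bH^{\prime i}$ for $i\ge 2$, completing the proof. The main obstacle is substep (ii): $\gamma_1$ and $\gamma'_1$ may sit in different maximal tori, so their centralizers are not obviously comparable, and the depth-in-centralizer computation together with Remark~(3) is precisely what converts the existence of one good approximation into a structural constraint forcing $\gamma_1\in Z(\bH^{\prime 1})$.
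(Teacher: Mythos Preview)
Your argument is correct, and the overall inductive architecture (establish $b_1=b'_1$, then $\bH^1=\bH^{\prime 1}$, then recurse) matches the paper's. The difference lies in how you prove $\bH^1=\bH^{\prime 1}$. The paper takes a more direct route: since every $\gamma_i$ and $\gamma'_j$ lies in $C_G(\gamma)$, which by Lemma~\ref{lem: good-element-1}(2) is contained in $H^k\cap H^{\prime k'}$, one immediately has $\gamma'_z\gamma'_1\in\gamma_z\gamma_1 H^1_{b_1^+}$; then the second clause of Lemma~\ref{lem: good-element-1}(1) gives $C_G(\gamma'_z\gamma'_1)=H^1$, i.e.\ $\bH^{\prime 1}=\bH^1$ in one stroke, with no symmetry argument needed. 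Your depth-in-$\bH^{\prime 1}$ contradiction forcing $\gamma_1\in Z(\bH^{\prime 1})$ works, but it is essentially re-deriving the content of Lemma~\ref{lem: good-element-1}(1) from scratch via Remark~(3) and Lemma~\ref{lem: good-building-2}, so it is longer and relies on the (true, but implicit) connectedness of $\bH^{\prime 1}$ under Hypothesis~(C). For the inductive step the paper stays in $\bG$ and inducts on the index $i$, comparing $\gamma^i$ with $\gamma^{\prime i}$ directly; your recursion into $\bH^1$ is a legitimate repackaging of the same idea. One minor correction: in step~(i) the result you need for $\dth^{\bG}(\gamma)=b_1$ is Lemma~\ref{lem: good-building-2}, not Corollary~\ref{cor: good-building-2}.
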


\proof

We have $\gamma_i, \gamma'_j\in C_G(\gamma)\subset (\cap_i H^i)\cap(\cap_j H^{\prime j})=H^k\cap H^{\prime k'}$ for $i\in\{z,1,\cdots,k\}$ and $j\in\{z,1,\cdots,k'\}$.

Note that $\dth(\gamma)=b_1=b'_1$. Since $\gamma_z'\gamma'_{1}\in\gamma_z\gamma_{1}H^1_{b_1^+}$, $\gamma_z\gamma_{1}\in\gamma'_z\gamma'_{1}H^{\prime1}_{b_1^+}$ and $\gamma_z\gamma_1$ and $\gamma'_z\gamma'_1$ are $\bG$-good mod center,  $H^{1}=H^{\prime 1}$ by Lemma \ref{lem: good-element-1}.
By induction, suppose that $b_j=b_j'$ and $\bH^{j}=\bH^{\prime j}$ for $1\le j\le i$. Write $\gamma^i=\gamma_{1}\cdots\gamma_{{i}}$ and $\gamma^{\prime i}=\gamma'_{1}\cdots\gamma'_{{i}}$.
Suppose $b_{i+1}<b'_{i+1}$. Then, $(\gamma^i)^{-1}\gamma^{\prime i}\in\gamma_{i+1} H^{i+1}_{b_{i+1}^+}$. Since $\bG$-good element $\gamma_{{i+1}}$ is also $\bH^{i}$-good,  we have $H^{i}\subset C_G((\gamma^i)^{-1}\gamma^{\prime i}) \subset H^{i+1}$  by Lemma \ref{lem: good-element-1} (2). This is a contradiction to $H^i\subsetneq H^{i+1}$. 
Hence, $b_{i+1}=b_{i+1}'$. Now we have 
\begin{enumerate}
\item[(i)] $\dth(\gamma_{{i+1}})=\dth(\gamma'_{{i+1}}\gamma^{\prime i}(\gamma^i)^{-1})=b_{i+1}$;

\item[(ii)] $\gamma^{\prime i}(\gamma^i)^{-1}\in \Z({H^{i}})$; 

\item[(iii)] $\gamma_{{i+1}}$, $\gamma'_{{i+1}}\gamma^{\prime i}(\gamma^i)^{-1}$ are $\bH^{i}$-good. 
\end{enumerate}
Combining (i)-(iii), it follows that $\C_\bG(\gamma^{i+1})=\C_{\bH^i}(\gamma_{{i+1}})=\C_{\bH^i}(\gamma'_{{i+1}})=\C_\bG(\gamma^{\prime i+1})$. Hence, $\bH^{i+1}=\bH^{\prime i+1}$. Similarly, one can show $k=k'$.
\qed

\subsection{\bf Decomposition of $G^\rig$} 
We first observe that $G^\rig=\Z \cdot\left(\cup_{x\in\Bd(G)}\stab_G(x)\right)$, and $\Z G_0\subset G^\rig$ where $G_0=\cup_{x\in\Bd(G)}G_{x,0}$. 

\begin{definition} \rm Let $\gamma,\gamma'\in G^\rig$.

\begin{enumerate}
\item
Suppose that $\gamma$ and $\gamma'$ are $\bG$-good mod center. We say that they are \emph{$\bG$-good $r^+$-equivalent mod center} and write $\gamma\overset{\rg}\sim \gamma'$ if there are $g\in G$ and a maximal torus $T$ such that $^g\!\gamma'\in \gamma T_{t^+}\subset T$ where $t=\min\{d(\gamma),r\}$.  We write a $\bG$-good $r^+$-equivalence class of $\gamma$ as $[\gamma]_\rg$.
Let 
\[
\begin{array}{ll}
[Z]^\rg_r&:=\{[z]_\rg\mid \bar z\in Z/Z_{r^+}\} \\ 
\CaS^\rg_r&:=[Z]^\rg_r\cup\{[\gamma]_{\rg}\mid \gamma \textrm{ is $\bG$-good mod center of depth } \dth(\gamma)\le r\}.
\end{array}
\]

\item
For  $\gamma,\gamma'\in G^\rig$, we say $\gamma$ and $\gamma'$ are {\it $r^+$-equivalent mod center} and write $\gamma\sim\gamma'$ if there are $g\in G$ and a maximal torus $T$ so that $^g\!\gamma'_{\le r}\in \gamma_{\le r}T_{r^+}\subset T$.
We write an $r^+$-equivalence class of $\gamma$ as $[\gamma]$, and 
let $\mathcal S_r$ be the set of $r^+$-equivalence classes of semisimple compact-modulo-center elements. 
\end{enumerate}
\end{definition}

Lemma \ref{lem: for dec} and the following lemma shows that the definition in (2) does not depend on the choice of truncation $\gamma_{\le r}$ and $\gamma'_{\le r}$.


\begin{proposition} \label{prop: dec Gc} Suppose Hypotheses (A)--(D) and 1-4 hold. Then, we have the following: 

\begin{enumerate}
\item[(1)] For $\gamma,\gamma'\in G^\rig$, 
$^G\!(\gamma H^\gamma_{r^+})\cap{}^G\!(\gamma' H^{\gamma'}_{r^+})\neq\emptyset$ if and only if $\gamma$ and $\gamma'$ are $r^+$-equivalent mod center. 
\item[(2)] If $\gamma\sim\gamma'$, $^G\!(\gamma H^\gamma_{r^+})={}^G\!(\gamma' H^{\gamma'}_{r^+})$
\item[(3)]
\[
\sqcup_{[\gamma]\in\mathcal S_r}{}^G\!\left(\gamma H^\gamma_{r^+}\right)=G^\rig. 
\]
\item[(4)] Each ${}^G\!\left(\gamma H^\gamma_{r^+}\right)$ is open and closed.
\end{enumerate}
\end{proposition}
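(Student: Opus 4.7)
My plan is to prove the four parts in order, with (1) and (4) being the main technical points and (2), (3) following with relatively little additional effort. For the $\Leftarrow$ direction of (1), I would produce a common element of the two $G$-orbits directly. Since $\gamma_{\le r}$ lies in the abelian torus $T$, we have $T\subset H^\gamma$ and hence $T_{r^+}\subset H^\gamma_{r^+}$; combined with $(\gamma_s)_{r^+}\in T_{r^+}$ and, via Hypothesis \ref{hyp: unip} together with observation (5) after Definition \ref{defn: depth}, $\gamma_u\in H^\gamma_{r^+}$, this gives $\gamma_{\le r}\in \gamma H^\gamma_{r^+}$, and symmetrically $\gamma'_{\le r}\in \gamma' H^{\gamma'}_{r^+}$. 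If $g\in G$ and a torus $T$ realize the $r^+$-equivalence with ${}^g\gamma'_{\le r}\in \gamma_{\le r}T_{r^+}$, then ${}^g\gamma'_{\le r}$ lies in both $G$-orbits, and part (2) follows at once from the $G$-invariance of each ${}^G(\gamma H^\gamma_{r^+})$.

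For the $\Rightarrow$ direction of (1), I would take $z=\gamma h={}^g(\gamma' h')$ in the intersection and pass to semisimple parts. Under Hypotheses \ref{hyp: JD}--\ref{hyp: unip}, Jordan decomposition is compatible with the Moy--Prasad filtrations, so the semisimple part satisfies $z_s\in \gamma_{\le r}H^\gamma_{r^+}$ and $z_s\in {}^g(\gamma'_{\le r}H^{\gamma'}_{r^+})$. Applying Proposition \ref{prop: good product} to $z_s$ yields a good product whose truncation is rigid by Lemma \ref{lem: for dec}; matching this against the two expressions and using Lemma \ref{lem: good-element-1} to identify the centralizers forces $\gamma_{\le r}$ and ${}^g\gamma'_{\le r}$ to lie in a common maximal torus and to differ by an element of $T_{r^+}$, which is exactly $\gamma\sim\gamma'$. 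I expect this step to be the main obstacle, as it requires carefully tracking the truncation of $z_s$ through two a priori distinct good-product expressions.

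For (3), disjointness is immediate from (1). For surjectivity, given $g\in G^\rig$ with Jordan decomposition $g=g_s g_u$, I would apply Proposition \ref{prop: good product} to $g_s$ to write $g_s=(g_s)_{\le r}(g_s)_{r^+}$, with $(g_s)_{r^+}$ in a torus $T\subset H^g$ of depth $>r$. By Hypothesis \ref{hyp: unip} and Lemma \ref{lem: good-element-1}(4), $g_u\in C_G(g_s)\subset H^g$, and since $g_u$ is unipotent, observation (5) gives $\dth^{H^g}(g_u)=\infty$, whence $g_u\in H^g_{r^+}$. By observation (6), the commuting product $(g_s)_{r^+}g_u\in H^{g,\rig}$ has depth $>r$, so $g=(g_s)_{\le r}\cdot(g_s)_{r^+}g_u\in (g_s)_{\le r}H^g_{r^+}=gH^g_{r^+}$.

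For part (4), I would establish openness by iterating Lemma \ref{lem: approximation} across the good-product factors of $\gamma_{\le r}$: for $\gamma h\in \gamma H^\gamma_{r^+}$ with $h\in H^\gamma_{x,s}$ at some $x\in\Bd(H^\gamma)$ and $s>r$, the lemma, combined with the Iwahori-like decomposition of $G_{x,s}$ transverse to $H^\gamma$, conjugates a $G_{x,s}$-neighborhood of $\gamma h$ into $\gamma H^\gamma_{x,s}\subset \gamma H^\gamma_{r^+}$. Closedness then follows from (1) and (3), since the complement of ${}^G(\gamma H^\gamma_{r^+})$ in $G^\rig$ is the union of the other (open) classes, and $G^\rig$ is itself open and closed in $G$.
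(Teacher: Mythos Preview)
Your outline matches the paper's approach closely; the architecture of all four parts is the same. Two refinements are worth noting. For (1)$\Rightarrow$ and (2), the paper streamlines what you flag as the main obstacle: after reducing without loss of generality to $\gamma=\gamma_{\le r}$ and $\gamma'=\gamma'_{\le r}$, one takes $\delta=\gamma h=\gamma' h'$ in the intersection and applies the Jordan decomposition to $h$ and $h'$ (rather than to $\delta$). Since $\gamma$ is central in $H^\gamma$, the expressions $(\gamma h_s)h_u$ and $(\gamma' h'_s)h'_u$ are two Jordan decompositions of $\delta$, so uniqueness gives $\gamma h_s=\gamma' h'_s$; a single application of Lemma~\ref{lem: for dec} (using $\dth(h_s),\dth(h'_s)>r$) then yields $H^\gamma=H^{\gamma h_s}=H^{\gamma'}$, from which $\gamma H^\gamma_{r^+}=\gamma h_s H^\gamma_{r^+}=\gamma' H^{\gamma'}_{r^+}$ follows immediately, giving (2) and (1)$\Rightarrow$ at once. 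For (3), your direct appeal to Proposition~\ref{prop: good product} on $g_s$ glosses over the depth-$0$ case: Hypothesis~\ref{hyp: good} only produces good elements in strictly positive depth, so the paper first invokes the topological Jordan decomposition (\cite[Lemma~2.38]{Spice}) on a compact translate $gz$ to extract the absolutely semisimple factor $g_\as$ as the depth-$0$ good piece, and only then runs the good-product algorithm on $g_{0^+}\in C_G(g_\as)_{0^+}$. Your treatment of (4) via iterated use of Lemma~\ref{lem: approximation} is exactly what the paper does.
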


\proof 
(1) For $\Leftarrow$, by Lemma \ref{lem: for dec}, we have $\gamma H^\gamma_{r^+}=\gamma_{\le r} H^\gamma_{r^+}$ and  $\gamma' H^{\gamma'}_{r^+}=\gamma'_{\le r} H^{\gamma'}_{r^+}$ independent of the choices of $\gamma_{\le r}$ and $\gamma'_{\le r}$. Since $\gamma_{\le r}T_{r^+}\subset H^\gamma$, Hence $\gamma'_{\le r}\in\, ^G\!(\gamma H^\gamma_{r^+})\cap{}^G\!(\gamma' H^{\gamma'}_{r^+})$.

For $\Rightarrow$ and (2), without loss of generality, one may assume that $\gamma H^\gamma_{r^+}\cap \gamma' H^{\gamma'}_{r^+}\neq\emptyset$. One may also assume that $\gamma=\gamma_{1}\cdots\gamma_{k}$ and $\gamma'=\gamma'_{1}\cdots\gamma'_{{k'}}$ with $b_k, b'_{k'}\le r$, that is, $\gamma_{r^+}=\gamma'_{r^+}=1$. Let $\delta\in \gamma H^\gamma_{r^+}\cap \gamma' H^{\gamma'}_{r^+}$. Then, $\delta=\gamma\cdot h=\gamma'\cdot h'$ with $h\in H^\gamma_{r^+}$ and $h'\in H^{\gamma'}_{r^+}$. Let $h=h_sh_u$ (resp. $h'=h'_sh'_u$) be the Jordan decomposition of $h$ in $H^\gamma$ (resp. $h'$ in $H^{\gamma'}$). Then, $\gamma h_sh_u$ and $\gamma'h'_sh'_u$ are two expressions of the Jordan decomposition of $\delta$. By the uniqueness of Jordan decomposition, $\gamma h_s=\gamma' h'_s$. Note $\dth(h_s)=\dth(h)>r$ and $\dth(h'_s)=\dth(h')>r$. By applying Lemma \ref{lem: for dec} to $\gamma h_s$, we have $H^{\gamma h_s}=H^\gamma=H^{\gamma'}$. Hence, $\gamma H^\gamma_{r^+}=\gamma h_s H^\gamma_{r^+}=\gamma' H^{\gamma'}_{r^+}$ and $\gamma\sim\gamma'$.

For (3), write $G_{\mathcal S_r}$ for $\sqcup_{[\gamma]\in\mathcal S_r}{}^G\!\left(\gamma H^\gamma_{r^+}\right)$. Clearly $G_{\mathcal S_r}\subset G^\rig$, and $G_{\mathcal S_r}$ is a disjoint union by (1). Conversely, for any $g\in G^\rig$, there is $x\in\Bd(G)$ and $z\in Z$ so that $gz\in\stab_G(x)$. By \cite[Lemma 2.38]{Spice}, we have   the topological Jordan decomposition of $gz=g_\as g_\tu$ with $g_\as$ absolutely semisimple and $g_\tu\in G'_{0^+}$  where $G'=C_G(g_\as)$. By Lemma \ref{lem: good-building-2}, $x\in\Bd(G')$. By Corollary \ref{cor: good-building-2}, we have $g_{\as}\in\stab_G(x)$. Let $g_{0^+}=g_sg_u$ be the Jordan decomposition of $g_\tu$ in $G'$. Let $g_s=g_{1}\cdots g_{k}g_{r^+}$ be a good product of $g_s$ mod $r^+$ with $\dth(g_i)=b_i$. Then, since $g_u$ commutes with $g_s$, $g_u\in C_{G'}(g_{1}\cdots g_{k})=H^{g_\as g_s}$ by Lemma \ref{lem: good-element-1}-(2) and thus $gz=g_\as g_sg_u\in g_\as g_s H^{g_\as g_s}_{r^+}$. Hence, $g\in G_{\mathcal S_r}$ and $G^\rig\subset G_{\mathcal S_r}$. 

For (4), let $\gamma=\gamma_z\gamma_1\cdots\gamma_k\gamma_{r^+}$ and $H^{i}$ be as in Proposition \ref{prop: good product}. We may assume $\gamma_{r^+}=1$. Let $g\in G$ and $h\in H^\gamma_{r^+}$ so that $^g\!(\gamma h)\in 
{}^G\!\left(\gamma H^\gamma_{r^+}\right)$. We may assume that $g=1$.  Let $y\in\Bd(H^\gamma)$ with $\gamma h\in \gamma H^\gamma_{y,r^+}\subset \gamma H^\gamma_{r^+}$. By Lemma \ref{lem: approximation}, we have ${}^{H^{{k-1}}_{y,(r-b_k)^+}}\!(\gamma_{k} H^\gamma_{y,r^+})=\gamma_{k}H^{{k-1}}_{y,r^+}$ and thus ${}^{H^{{k-1}}_{y,(r-b_k)^+}}\!(\gamma H^\gamma_{y,r^+})=\gamma H^{{k-1}}_{y,r^+}$ since $\gamma_{1}\cdots\gamma_{{k-1}}\in Z(H^{{k-1}})$. Inductively, setting $\bH^{0}=\bG$, 
we have ${}^{H^{{i-1}}_{y,(r-b_i)^+}}\!(\gamma H^\gamma_{y,r^+})=\gamma H^{{i-1}}_{y,r^+}$ for $i=1,\cdots, k$. Hence, $\gamma h\in\gamma G_{y,r^+}\subset {}^G\!\left(\gamma H^\gamma_{r^+}\right)$ and hence ${}^G\!\left(\gamma H^\gamma_{r^+}\right)$ is open. It is also closed since its compliment is open. 
\qed

\begin{corollary}\label{cor: dec Gc} Suppose Hypotheses (A)--(D), 1--4 hold. \ 
\begin{enumerate}
\item
For $g\in G^\rig$ with $\dth(g)<\infty$, we have $g=\gamma\cdot u$ for a $\bG$-good mod center element $\gamma$ of depth $\dth(g)$ and $u\in G^\gamma_{\dth(\gamma)^+}$ where $\bG^\gamma=\C_\bG(\gamma)$.
\item
We have
\[
G^\rig=\left(\sqcup_{[\gamma]_\rg\in\CaS^\rg_r\setminus[Z]^\rg_r} \,^G\!\left(\gamma G^\gamma_{\dth(\gamma)^+}\right)\right)
\sqcup \left(\sqcup_{[z]_\rg\in[Z]^\rg_r}\,zG_{r^+}\right).
\]
\end{enumerate}
\end{corollary}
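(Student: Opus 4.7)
The plan is to deduce (1) from the Jordan decomposition combined with Hypothesis \ref{hyp: good} and Lemma \ref{lem: good-element-1}, and to deduce (2) by regrouping the decomposition of Proposition \ref{prop: dec Gc}(3) according to the leading good factor.

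For (1), take the Jordan decomposition $g = g_s g_u$ (available by Hypothesis \ref{hyp: JD}). Observation (6) after Definition \ref{defn: depth} gives $\dth(g) = \dth(g_s)$, so $g_s \notin Z$ by observation (5). By Hypothesis \ref{hyp: tame}, pick a tamely ramified maximal $F$-torus $T$ containing $g_s$. Produce a $\bG$-good element $\gamma$ of depth $\dth(g)$ approximating $g_s$ as follows: if $\dth(g_s) > 0$, apply Hypothesis \ref{hyp: good} to the coset $g_s T_{\dth(g_s)^+}$; if $\dth(g_s) = 0$, use the topological Jordan decomposition $g_s = g_\as g_\tu$ and set $\gamma := g_\as$. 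Define $v := \gamma^{-1} g_s$, which lies in $T_{\dth(\gamma)^+} \subset G^\gamma_{\dth(\gamma)^+}$ in the positive-depth case (using $T \subset G^\gamma$) and in $G^\gamma_{0^+}$ in the depth-zero case. By Lemma \ref{lem: good-element-1}(1), $g_u \in C_G(g_s) \subset G^\gamma$, and $g_u$ commutes with both $v$ and $\gamma$. Set $u := v g_u = \gamma^{-1} g$. Then $u \in G^\gamma$ with Jordan decomposition $u = v \cdot g_u$ inside $G^\gamma$; since $g_u$ has infinite $G^\gamma$-depth (observation (5) applied in $G^\gamma$), we get $\dth^{G^\gamma}(u) = \dth^{G^\gamma}(v) > \dth(\gamma)$, hence $u \in G^\gamma_{\dth(\gamma)^+}$.

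For (2), apply Proposition \ref{prop: dec Gc}(3) and regroup the classes $[\gamma] \in \CaS_r$ according to the good-product structure (Proposition \ref{prop: good product}): $\gamma = \gamma_z \gamma_1 \cdots \gamma_k \gamma_{r^+}$. When $k = 0$, $H^\gamma = G$ and ${}^G(\gamma H^\gamma_{r^+}) = \gamma_z G_{r^+}$, contributing to the second union $\sqcup_{[z]_\rg \in [Z]^\rg_r} zG_{r^+}$; distinct cosets here are disjoint because $z_1 G_{r^+} \cap z_2 G_{r^+} \neq \emptyset$ forces $z_1 z_2^{-1} \in Z \cap G_{r^+} = Z_{r^+}$. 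When $k \ge 1$, the leading good factor $\gamma_1$ determines a well-defined class $[\gamma_1]_\rg \in \CaS^\rg_r \setminus [Z]^\rg_r$ by Lemma \ref{lem: for dec}, and by part (1) the union of ${}^G(\gamma H^\gamma_{r^+})$ over all $[\gamma] \in \CaS_r$ whose leading good factor is $\sim_\rg$-equivalent to $\gamma_1$ is exactly ${}^G(\gamma_1 G^{\gamma_1}_{\dth(\gamma_1)^+})$. The two unions are disjoint by the depth dichotomy (elements of ${}^G(\gamma_1 G^{\gamma_1}_{\dth(\gamma_1)^+})$ have depth $\dth(\gamma_1) \le r$ while elements of $zG_{r^+}$ have depth $> r$ by observation (3)); the first union is itself disjoint by Proposition \ref{prop: dec Gc}(1) combined with Lemma \ref{lem: good-element-1}.

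The main obstacle is in part (1), placing $u$ literally in a single filtration subgroup $G^\gamma_{y, \dth(\gamma)^+}$ rather than merely in the union $\cup_y G^\gamma_{y, \dth(\gamma)^+}$. This is arranged by choosing $y$ to be the projection to $\Bd(G^\gamma)$ of a base point $x \in \Bd(G)$ at which $g$ fixes modulo center (such $x$ exists since $g \in G^\rig$); Corollary \ref{cor: good-building-2} places $y \in \Bd(G^\gamma)$, and the Iwahori-like decomposition of $G_{x, \dth(\gamma)^+}$ relative to $G^\gamma$ then places the commuting components $v$ and $g_u$ simultaneously in $G^\gamma_{y, \dth(\gamma)^+}$.
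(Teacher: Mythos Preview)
Your approach diverges from the paper's in a way that creates real work where none is needed. The paper proves both parts by simply invoking Proposition~\ref{prop: dec Gc} with a well-chosen value of $r$: for (1), set $r:=\dth(g)$; then the decomposition $G^\rig=\sqcup_{[\gamma]\in\CaS_r}{}^G(\gamma H^\gamma_{r^+})$ forces $g$ into some ${}^G(\gamma H^\gamma_{r^+})$, and because the good product of $\gamma$ has $b_1<\cdots<b_k\le r=\dth(g)$ while every element of $\gamma H^\gamma_{r^+}$ has depth $b_1$, one gets $k=1$ and $H^\gamma=G^\gamma$. For the disjointness in (2), set $r:=\dth(\gamma)$ and apply Proposition~\ref{prop: dec Gc}(1)--(2). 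This is why the result is labeled a corollary.

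Your direct argument for (1) has a genuine gap at the final step. From $\dth^{G^\gamma}(u)=\dth^{G^\gamma}(v)>\dth(\gamma)$ you conclude $u\in G^\gamma_{\dth(\gamma)^+}$, but the depth function is defined \emph{mod center}: what you actually obtain is $u\in Z(G^\gamma)\,G^\gamma_{\dth(\gamma)^+}$, not $u\in G^\gamma_{\dth(\gamma)^+}$. Your last paragraph attempts to repair this, but the fix is circular: Corollary~\ref{cor: good-building-2} has $u'\in G'_{\dth(\gamma)^+}$ as a \emph{hypothesis}, so you cannot use it to deduce that very membership. (There is no ``projection'' $\Bd(G)\to\Bd(G^\gamma)$ either; one has an embedding in the other direction.) A correct repair would pass to $C:=C_{G^\gamma}(v)^\circ$, observe $v\in Z(C)_{\dth(\gamma)^+}$ so that $v\in C_{y,\dth(\gamma)^+}$ for every $y\in\Bd(C)$, then use that unipotents of $C$ lie in $C_{y,\dth(\gamma)^+}$ for some $y$; but this is substantially more than what you wrote.

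For (2), your regrouping idea is reasonable, but note that the leading factor should be the good \emph{mod center} element $\gamma_z\gamma_1$, not $\gamma_1$ alone; the class $[\gamma_1]_\rg$ is not the same as $[\gamma_z\gamma_1]_\rg$. And your claim that the union of the pieces with a given leading factor equals ${}^G(\gamma_z\gamma_1\,G^{\gamma_z\gamma_1}_{\dth(\gamma_1)^+})$ ultimately rests on part (1), so the gap above propagates.
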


\proof
(1)  Applying the above lemma when $r:=\dth(g)$, we have $g\in \gamma G^\gamma_{r^+}$ for a $\bG$-good mod center element $\gamma$ of depth $\dth(g)$. 

(2) By (1) $G^\rig=\left(\cup_{[\gamma]_\rg\in\CaS^\rg_r\setminus[Z]^\rg_r} \,^G\!\left(\gamma G^\gamma_{\dth(\gamma)^+}\right)\right)\cup \left(\cup_{[z]_\rg\in[Z]^\rg_r}\,zG_{r^+}\right)$.
To prove the disjointness, suppose $[\gamma]_\rg,[\gamma']_\rg\not\in[Z]_\rg$. If $^G\!\left(\gamma G^\gamma_{\dth(\gamma)^+}\right)\cap\, ^G\!\left(\gamma' G^{\gamma'}_{\dth(\gamma')^+}\right)\neq\emptyset$, $\dth(\gamma)=\dth(\gamma')$ and $^G\!\left(\gamma G^\gamma_{\dth(\gamma)^+}\right)=\, ^G\!\left(\gamma' G^{\gamma'}_{\dth(\gamma')^+}\right)$ follows from the above lemma by setting $r=\dth(\gamma)$. The other cases are easier.
\qed


\section{\bf Descents}\label{sec: descents}

\subsection{\bf Theorem A}\label{subsec: thmA}  \ 

From now on, we fix $r\in\bbR_{>0}$. For any $[\gamma]\in\mathcal S_r$, let $\CH({}^G\!(\gamma H^\gamma_{r^+}))$ be the $\BZ[\frac{1}{p}]$-submodule of $\CH$ consisting of functions supported in ${}^G\!(\gamma H^\gamma_{r^+})$ and let $\overline \CH({}^G\!(\gamma H^\gamma_{r^+}))$ be its image in $\overline \CH$. By Lemma \ref{domain} and Proposition \ref{prop: dec Gc}, we have $$\CH^\rig=\oplus_{[\gamma]\in\mathcal S_r} \CH({}^G\!(\gamma H^\gamma_{r^+})), \quad \overline \CH^\rig=\oplus_{[\gamma]\in\mathcal S_r} \overline \CH({}^G\!(\gamma H^\gamma_{r^+})).$$

Let $\CH_{r^+}({}^G\!(\gamma H^\gamma_{r^+})):=\CH_{r^+} \cap \CH({}^G\!(\gamma H^\gamma_{r^+}))$.
Then we have $\CH_{r^+}^\rig \supset \oplus_{[\gamma]\in\mathcal S_r} \CH_{r^+}({}^G\!(\gamma H^\gamma_{r^+})).$ Note that the intersection of a double coset of $G_{x, r^+}$ with a given $G$-domain ${}^G\!(\gamma H^\gamma_{r^+})$, in general, is not closed under the left (or equivalently, right) multiplication of $G_{x, r^+}$. Thus we have $$\CH_{r^+}^\rig \neq\oplus_{[\gamma]\in\mathcal S_r} \CH_{r^+}({}^G\!(\gamma H^\gamma_{r^+})).$$

Let $\overline \CH_{r^+}({}^G\!(\gamma H^\gamma_{r^+}))$ be the image of $\CH_{r^+}({}^G\!(\gamma H^\gamma_{r^+}))$ in $\overline \CH_{r^+}$. In other words, $\overline \CH_{r^+}({}^G\!(\gamma H^\gamma_{r^+}))$ is the $\bbZ[\frac1p]$-submodule of $\overline\CaH$ consisting of elements represented by functions in $\CH_{r^+}({}^G\!(\gamma H^\gamma_{r^+}))$.

The main purpose of this section is to show that we still have the desired direct sum decomposition of $\overline \CH_{r^+}^\rig=\oplus_{[\gamma]\in\mathcal S_r} \overline \CH_{r^+}({}^G\!(\gamma H^\gamma_{r^+}))$. 

\begin{definition}\rm \ 
\begin{enumerate} 
\item
For any $s\in\tbR_{\ge 0}$, we define
\[
\CaH^{G}_{s}=\sum_{x\in\Bd(G)}C_c(G/G_{x,s}).
\]
For $s,t\in\tbR$, with $0<t<s$, and $\gamma_z\in Z\pmod{Z_{r^+}}$, define
\begin{gather*}
\CaH^{G}_{t,s}(\gamma_z)=\sum_{x\in\Bd(G)}C_c\left(\left(\gamma_z\cdot G_{x,t}\right)/G_{x,s}\right),
\\
\CaH^{G,\flat}_{t,s}(\gamma_z)=\sum_{x\in\Bd(G)} C_c\left(\left(\gamma_z\cdot(G_{x,t}\cap G_{t^+})\right)/G_{x,s}\right).
\end{gather*}
We note that $\CH^{G,\flat}_{s, s^+}(\gamma_z)$ is spanned by $\mathbbm{1}_X$, where $X=\gamma_zgG_{x,s^+} \in \left.\gamma_zG_{x, s}\right/G_{x, s^+}$ for some $g\in G_{x,s}$ and $x \in \Bd(G)$ with $g G_{x,s^+}\subset G_{s^+}$ by \cite[Corollary 3.7.8 \& Corollary 3.7.10]{AD}. 
For simplicity, we will also write
\[
\CaH^{G}_{t,s}:=\CaH^{G}_{t,s}(1),\qquad\CaH^{G,\flat}_{t,s}(1)=\CaH^{G,\flat}_{t,s}.
\]

\medskip

\item
Let $\gamma=\gamma_z\gamma_{1}\cdots\gamma_{k}\gamma_{r^+}\in T$ be a good product of $\gamma$ with $\dth(\gamma_i)=b_i$.
\begin{enumerate}
\item[(i)] 
Let $\bH^\gamma=C_{\bG}(\gamma_{\le r})$. Define
\[
\CaH_{[\gamma]}^{G,\flat}:=\sum_{x\in\Bd(H^\gamma)} C_c\left(\left(\gamma_{\le r}\cdot(H^\gamma_{x,r}\cap H^\gamma_{r^+}) G_{x,r^+}\right)/G_{x,r^+}\right)
\]
In particular, for any $\gamma_z\in Z(G)$, we have 
\[
\CaH_{[\gamma_z]}^{G,\flat}:=\sum_{x\in\Bd(G)} C_c\left(\left(\gamma_z\cdot \left(G_{x,r}\cap G_{r^+}\right)G_{x,r^+}\right)/G_{x,r^+}\right).
\]
\item[(ii)]
If $[\gamma]_\rg:=[\gamma_z\gamma_{1}]_\rg\in\CaS^\rg_r$ with $\gamma_1$ $\bG$-good of depth $b_1\le r$, 
define
\[
\CaH_{[\gamma]_\rg}^{G,\flat}:=\sum_{x\in\Bd(H^1)} C_c\left(\left(\gamma_z\gamma_{1}\cdot(H^1_{x,b_1}\cap H^1_{b_1^+}) G_{x,r^+}\right)/G_{x,r^+}\right)
\]
where $\bH^1=C_{\bG}(\gamma_{1})$.  If $[\gamma]_\rg=[\gamma_z]_\rg$, let
\[
\CaH_{[\gamma_z]_\rg}^{G,\flat}:=\CaH_{[\gamma_z]}^{G,\flat}.
\]
Note that we  have
$\CaH_{[1]_\rg}^{G,\flat}=\CaH_{[1]}^{G,\flat}=\CaH^{G,\flat}_{r,r^+}(1).$
\end{enumerate}
\end{enumerate}
In all cases, we denote the image of each $\mathbb Z[\frac1p]$-submodule in the cocenter $\overline\CaH$ using $\overline{\phantom{\CaH}}$, e.g., $\overline\CaH_s^G$, $\overline\CaH^{G,\flat}_{t,s,}$, etc.
\end{definition}

\begin{theorem}\label{JD-H} Suppose Hypotheses (A)--(D) and 1-4 hold.
\begin{enumerate}
\item 
$\overline \CH_{r^+}^\rig=\oplus_{[\gamma]\in\mathcal S_r} \overline \CH_{r^+}({}^G\!(\gamma H^\gamma_{r^+}))$.
\item
For any $[\g] \in \CS_r$, $\overline \CH_{r^+}({}^G\!(\gamma H^\gamma_{r^+}))=\overline\CH^{G,\flat}_{[\g]}$.
 \end{enumerate}
\end{theorem}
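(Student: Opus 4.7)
The direct-sum assertion in (1) is formal: Proposition~\ref{prop: dec Gc} exhibits $G^\rig$ as a disjoint union of $G$-domains ${}^G\!(\gamma H^\gamma_{r^+})$, so Lemma~\ref{domain} gives $\overline\CaH^\rig = \bigoplus_{[\gamma]\in\CaS_r} \overline\CaH({}^G\!(\gamma H^\gamma_{r^+}))$, and intersecting with $\overline\CaH_{r^+}$ makes $\sum_{[\gamma]} \overline\CaH_{r^+}({}^G\!(\gamma H^\gamma_{r^+}))$ automatically direct. The inclusion $\overline\CaH^{G,\flat}_{[\gamma]} \subset \overline\CaH_{r^+}({}^G\!(\gamma H^\gamma_{r^+}))$ in (2) is equally immediate by definition-chasing: generators of $\CaH^{G,\flat}_{[\gamma]}$ are right-$G_{x,r^+}$-invariant (hence in $\CaH_{r^+}$), and by the conjugation calculation in the proof of Proposition~\ref{prop: dec Gc}(4), together with $H^\gamma_{x,r^+}\subset G_{x,r^+}$, their support lies in $\gamma\,G_{x,r^+}\subset {}^G\!(\gamma H^\gamma_{r^+})$. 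Both statements therefore reduce to the single claim that every $f \in \CaH_{r^+}\cap\CaH({}^G\!(\gamma H^\gamma_{r^+}))$ is congruent modulo $[\CaH,\CaH]$ to an element of $\CaH^{G,\flat}_{[\gamma]}$; granting this, (1) follows by splitting an arbitrary $f\in\CaH_{r^+}^\rig$ via its restrictions to the pieces ${}^G\!(\gamma H^\gamma_{r^+})$ and applying the claim componentwise.

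To prove this reduction I would run a two-stage descent, following Howe, Harish-Chandra, Waldspurger, and most directly DeBacker in \cite{De2}. In the first stage, I would replace a given $\bar f$ by a representative in $\CaH_{0,r^+}:=\sum_{x\in\Bd(G)} C_c(Z\stab_G(x)/G_{x,r^+})$ whose support remains in ${}^G\!(\gamma H^\gamma_{r^+})$. Concretely, since conjugation is trivial in the cocenter, any double coset $G_{x,r^+}\,g\,G_{x,r^+}$ meeting ${}^G\!(\gamma H^\gamma_{r^+})$ may be pushed to one of the form $Z\stab_G(y)\,g'\,G_{y,r^+}$ with $y\in\Bd(H^\gamma)$, and then Lemma~\ref{lem: fixed pts} with Corollary~\ref{cor: good-building-2} forces $\gamma_{\le r}\in Z\stab_G(y)$ and clusters the support around $\gamma_{\le r} H^\gamma_{r^+}$-translates.

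The second stage is an inductive descent along the good-product factorization $\gamma_{\le r}=\gamma_z\gamma_1\cdots\gamma_k$ with its strictly decreasing chain of centralizers $G=H^0\supsetneq H^1\supsetneq\cdots\supsetneq H^k=H^\gamma$ from Proposition~\ref{prop: good product}. At the $i$-th step the representative is supported, up to cocenter equivalence, in $(\gamma_z\gamma_1\cdots\gamma_i)\,U_i\,G_{y,r^+}$ for a suitable Moy-Prasad subgroup $U_i\subset H^i$, and the commutator isomorphism $\Ad(\gamma_{i+1})-1$ underlying Lemma~\ref{lem: approximation}, combined with the Iwahori-type decomposition of $H^i_{y,s}$ relative to the pair $(H^{i+1},H^i)$, allows one to conjugate away the directions transverse to $H^{i+1}$ and replace the representative, modulo commutators, by one supported in $(\gamma_z\gamma_1\cdots\gamma_{i+1})\,U_{i+1}\,G_{y,r^+}$. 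Iterating $k$ times lands the support in $\gamma_{\le r}(H^\gamma_{y,r}\cap H^\gamma_{r^+})\,G_{y,r^+}$, which is exactly the defining support set of $\CaH^{G,\flat}_{[\gamma]}$; this is the inductive descent advertised in the introduction as Proposition~\ref{good-i}. The main obstacle lies precisely here, because $H^i$ is not a Levi subgroup of $G$ once $\gamma_i$ has positive depth, so no classical Iwahori decomposition applies verbatim: one must retain the $G_{y,r^+}$-invariance needed to stay in $\CaH_{r^+}$, use only conjugations that fix $y$, and verify that each averaging operation preserves the cocenter class rather than merely the $G$-orbit of the support. This is where Hypotheses (A)--(D), Hypotheses~\ref{hyp: JD}--\ref{hyp: good}, and the good-element apparatus of \cite{AS} do the work.
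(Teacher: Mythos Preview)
Your reduction of (1) to the ``single claim'' contains a genuine gap. You propose to deduce (1) from (2) by taking an arbitrary $f\in\CaH_{r^+}^\rig$, restricting it to each $G$-domain ${}^G\!(\gamma H^\gamma_{r^+})$, and applying the claim to each piece. But the restriction $f|_{{}^G\!(\gamma H^\gamma_{r^+})}$ is in general \emph{not} in $\CaH_{r^+}$: a $G_{x,r^+}$-coset can straddle several $G$-domains, so cutting it by a domain destroys the right-$G_{x,r^+}$-invariance. This is precisely the obstruction noted in \S\ref{subsec: thmA}, where it is recorded that $\CaH_{r^+}^\rig \neq \oplus_{[\gamma]} \CaH_{r^+}({}^G\!(\gamma H^\gamma_{r^+}))$ at the level of modules. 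So your claim, even if proved, gives only (2) and does not yield (1).

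The paper circumvents this by never restricting first. Instead it runs the descent on an \emph{arbitrary} $\chf_{gG_{y,r^+}}\subset G^\rig$ (Proposition~\ref{prop: descent}), producing a cocenter-equivalent combination of cosets each already lying in some $\CaH^{G,\flat}_{[\gamma']_\rg}$ for a single good element $\gamma'$; this is Corollary~\ref{3.3.3}. Then the inductive step is applied \emph{inside each} $H^{\gamma'}$, where a further depth-descent reveals the next good factor (not known in advance), and one iterates. The end result is the chain $\overline\CaH_{r^+}^\rig\subset\sum_{[\gamma]}\overline\CaH^{G,\flat}_{[\gamma]}\subset\sum_{[\gamma]}\overline\CaH_{r^+}({}^G\!(\gamma H^\gamma_{r^+}))\subset\overline\CaH_{r^+}^\rig$, forcing all inclusions to be equalities and giving (1) and (2) simultaneously. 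Your two-stage descent is the right toolkit, and your description of stage~2 is close to the content of Proposition~\ref{good-i} and Theorem~\ref{thm: bar i}; but note that in the paper the good factor $\gamma_{i+1}$ is \emph{discovered} by running depth-descent inside $H^i$, not assumed from a pre-fixed good product of a target $\gamma$. If you reorganize your argument to descend on a general $f\in\CaH_{r^+}^\rig$ rather than on one pre-restricted to a single domain, the proof goes through along the lines you sketch.
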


We will prove the above theorem in the rest of this section. We first need some lemmas.

\subsection{\bf Some lemmas}

The following is \cite[Lemma 4.5.1]{De2}:

\begin{lemma}\label{lem: DeB-depth-descent} Suppose Hypotheses (DB) and Hypothesis \ref{hyp: unip} hold.
Let $x\in\Bd(\bG,F)$ and suppose $s<r$. Let $\bS\subset\bG$ be a maximal $k$-split torus of $\bG$ such that $x\in\Apt(\bS,F)$. If $u\in (\mathcal U G_{x,s^+}\cap (G_{x,s}\setminus G_{x,s^+}))$, then there exist $v\in{}^{G_x}\!(uG_{x,s^+})$ and $\lambda\in \bX_\ast(\bS,F)$ such that for sufficiently small $\epsilon>0$, we have 
\begin{enumerate}
\item $vG_{x,s^+}\subset G_{x+\epsilon\lambda, s^+}$ and
\item $vv'G_{x+\epsilon\lambda, r^+}\subset {}^{G_{x,(r-s)}}\!(vv'G_{x,r^+})$ for any $v'\in G_{x,s^+}$.
\end{enumerate}
\end{lemma}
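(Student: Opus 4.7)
My strategy is to invoke DeBacker's building-theoretic parameterization of nilpotent orbits, together with the Moy--Prasad isomorphism, so as to attach to $u$ a one-parameter subgroup of $\bS$ that contracts the relevant filtration. Since $u\in\CaU\cdot G_{x,s^+}\cap G_{x,s}$ with $s>0$, the Moy--Prasad isomorphism $G_{x,s}/G_{x,s^+}\simeq\lieG_{x,s}/\lieG_{x,s^+}$ identifies the class of $u$ modulo $G_{x,s^+}$ with a nilpotent element $N$ in the associated graded Lie algebra; Hypothesis \ref{hyp: unip} is used to ensure that $N$ genuinely sits in the identity-component nilpotent cone. By the hypotheses (DB), the $G_x$-orbit of $N$ in $\lieG_{x,s}/\lieG_{x,s^+}$ is parameterized by DeBacker in terms of a facet of $\Bd(G)$ and a degeneration datum; I would use this to produce $v\in{}^{G_x}(uG_{x,s^+})$ together with a cocharacter $\lambda\in\bX_\ast(\bS,F)$ such that $\bar v$ is supported in strictly positive $\lambda$-weights and $\lambda$ is the associated contracting cocharacter for the orbit of $N$.

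Part (1) is then a direct filtration calculation. For each affine root $\psi$ with gradient $\dot\psi$, translating the base point by $\epsilon\lambda$ shifts the affine root value by $\epsilon\langle\dot\psi,\lambda\rangle$; for positive-weight roots this makes the corresponding filtration subgroup \emph{larger}, so the positive-$\lambda$-weight components of any lift of $\bar v$ already sit in $G_{x+\epsilon\lambda,s^+}$ once $\epsilon>0$ is small enough that no relevant wall is crossed. Any zero-$\lambda$-weight contribution can be absorbed into $G_{x,s^+}\subset G_{x+\epsilon\lambda,s^+}$, which yields the inclusion $vG_{x,s^+}\subset G_{x+\epsilon\lambda,s^+}$.

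Part (2) uses the Iwahori-type decomposition attached to $\lambda$: write $G_{x+\epsilon\lambda,r^+}=U^-_{r^+}\cdot M_{r^+}\cdot U^+_{r^+}$, where $U^\pm$ are the unipotent radicals of the two opposite $\lambda$-parabolics, $M$ is the Levi centralizing $\lambda$, and subscripts denote intersections with $G_{x+\epsilon\lambda,r^+}$. The factors $M_{r^+}$ and $U^+_{r^+}$ are already contained in $G_{x,r^+}$, so only $U^-_{r^+}$ can push $vv'\cdot G_{x+\epsilon\lambda,r^+}$ off of $vv'\cdot G_{x,r^+}$. Because $v$ has $\lambda$-weight $\geq s$, one obtains $[\,G_{x,r-s}\cap U^-,\,v\,]\subset G_{x,r^+}$; a fixed-point iteration then produces an element $h\in G_{x,r-s}$ whose conjugation action absorbs the offending $U^-_{r^+}$-piece into $vv'\cdot G_{x,r^+}$, completing (2).

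The main obstacle is the commutator bookkeeping in the last step: one must verify that $[G_{x,r-s}\cap U^-,\,v]$ indeed has the claimed depth and that the compensating element produced by the iteration lies in $G_{x,r-s}$ rather than in a coarser filtration subgroup. This is precisely where both the hypothesis $s<r$ (making $r-s$ a legitimate positive filtration index) and the quantitative content of DeBacker's parameterization --- which supplies a cocharacter $\lambda$ whose weight on $N$ equals $s$ --- are indispensable.
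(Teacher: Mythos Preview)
The paper does not prove this lemma at all: it is simply quoted as \cite[Lemma 4.5.1]{De2}. So there is no ``paper's own proof'' to compare against beyond DeBacker's original argument.

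Your sketch has the right architecture for part~(1): conjugate so that the image of $u$ in $\lieG_{x,s}/\lieG_{x,s^+}$ is a nilpotent $N$ adapted to a Jacobson--Morozov cocharacter $\lambda\in\bX_\ast(\bS,F)$, and then check that every affine root $\psi$ contributing to $N$ satisfies $\langle\dot\psi,\lambda\rangle=2>0$, so $\psi(x+\epsilon\lambda)>s$.

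Part~(2), however, is confused in two ways. First, you have the roles of $U^+$ and $U^-$ reversed. For an affine root $\psi$, one has $\psi(x+\epsilon\lambda)=\psi(x)+\epsilon\langle\dot\psi,\lambda\rangle$, so the root subgroups of $G_{x+\epsilon\lambda,r^+}$ \emph{not} contained in $G_{x,r^+}$ are precisely those with $\psi(x)=r$ and $\langle\dot\psi,\lambda\rangle>0$, i.e.\ they lie in the \emph{positive}-weight unipotent $U^+$, not $U^-$. It is $U^-_{x+\epsilon\lambda,r^+}$ and $M_{x+\epsilon\lambda,r^+}$ that already sit inside $G_{x,r^+}$.

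Second, and more substantively, the mechanism you propose (``$[G_{x,r-s}\cap U^-,v]\subset G_{x,r^+}$, then iterate'') is not what is needed. One does not want certain commutators to \emph{vanish}; one wants the commutator map $h\mapsto [h,vv']$ from $G_{x,r-s}/G_{x,(r-s)^+}$ to $G_{x,r}/G_{x,r^+}$ to \emph{surject} onto the positive-$\lambda$-weight part, so that every element of $vv'G_{x+\epsilon\lambda,r^+}$ is reached by conjugation. That surjectivity is exactly the statement that $\ad(N):\lieG(i)\to\lieG(i+2)$ is onto for $i\ge -1$, which is $\mathfrak{sl}_2$-representation theory applied to the Jacobson--Morozov triple; this is the core of DeBacker's argument and is absent from your sketch. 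Relatedly, the phrase ``$v$ has $\lambda$-weight $\ge s$'' conflates the Moy--Prasad depth $s$ with the $\lambda$-grading: the $\lambda$-weight of $N$ is $2$, independent of $s$.
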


\begin{definition}\rm  (\cite{BM}) 
For any $g\in G$, the \emph{displacement function} $\dpl_g:\Bd(G)\rightarrow\bbR$ is defined as $\dpl_g(x)=\dist(\bar x,g\bar x)$ where $\dist(\bar x,g\bar x)$ is the geodesic distance in the reduced building $\Bd(\overline G)$ between $\bar x$ and $g\bar x$ where $\bar x$ is the image of $x$ in $\Bd(\overline G)$. Define 
$\dpl(g):=\min\{\dpl_g(x)\mid x\in\Bd(G)\}$. 
For any subset $S\subset \Bd(G)$ with compact image in $\Bd(\overline G)$, 
define $\dpl_{S}(g):=\min\{\dpl_g(x)\mid x\in S\}$. Note that $\dpl_S$ is well defined since $S$ has a compact image in $\Bd(\overline G)$.
\end{definition}

We would also need the notion of generalized $r$-facets. In \cite{De1}, they are defined as certain subsets of the reduced building $\Bd(\overline G)$. One can define generalized $r$-facets on the extended building $\Bd(G)$ in a similar way:

\begin{definition} \rm (\cite{De1})
For $x\in\Bd(G)$, define
\begin{align*}
\face(x):
&=\{y\in\Bd(G)\mid \lieG_{x,\rtm}=\lieG_{y,\rtm}\textup{ and }
                          \lieG_{x,\rtm^+}=\lieG_{y,\rtm^+}\}\\
&=\{y\in\Bd(G)\mid G_{x,\rtm}=G_{y,\rtm}\textup{ and }
                          G_{x,\rtm^+}=G_{y,\rtm^+}\}\\
\Facet(\rtm):
&=\{\face(x)\mid x\in\Bd(G)\}
\end{align*}
An element in $\Facet(\rtm)$ is called a {\it generalized $\rtm$-facet}
in $\Bd(G)$. We will often write $\face$ for $\face(x)$ when there is no confusion.
Note that  the closure $\overline F^\ast$ of $F^\ast\in\face(r)$ has a compact image in $\Bd(\overline G)$.
For $\face=\face(x)\in\Facet(\rtm)$, define 
\[
\begin{array}{c}
\lieG_{\face}:=\lieG_{x,\rtm},\qquad
\lieG{}_{\face}^+:=\lieG_{x,\rtm^+}\ ,\\
\G_{\face}:=\G_{x,\rtm},\qquad
\G{}_{\face}^+:=\G_{x,\rtm^+}\ .
\end{array}
\]

\end{definition}

\begin{remarks}\label{rmks: disp properties} \rm
Let $y\in\Bd(G)$ and $\bS$ be a maximal $F$-split torus of $\bG$ with $y\in\Apt(S)$. Let $C$ be an alcove (0-facet of maximal dimension) with $y\in\overline C\subset\Apt(S)$. For $g\in G$, there are $n\in N_G(S)$ and $b_i\in G_C$ with $g=b_1nb_2$. Define $g'=\,^{b_1^{-1}}\!g=nb_2b_1$. Then, we have the following:
\begin{enumerate}
\item For $r\ge0$, since $b_1\in N_G(G_{y,r})\cap N_G(G_{y,r^+})$, we have $b_1\in\stab_G(F^\ast)$ 
where $F^\ast$ is the $r$-facet containing $y$.
\item We have $\chf_{gG_{F^\ast}^+}\equiv\chf_{g'G_{F^\ast}^+}\mod{[\CaH,\CaH]}$. Here for $X\subset G$, $\chf_X$ denotes the characteristic function with support $X$.
\item Since $y\in\overline C$, we have $g'y=nb_2b_1y=ny\in\Apt(S)$.
\item $\dpl_{g'}(y)=\dpl_g(y)\le \dpl_g(x)$ for all $x\in\overline F^\ast$.
\end{enumerate}
\end{remarks}

The proof of the following lemma is adapted from that of \cite[Corollary 4.2.9]{De2}. We include the proof for completeness.

\begin{lemma}\label{lem: descend by disp}
Let $\face\in\Facet(r)$ and $g\in G$. 
Suppose $\dpl(g)=0$ and $m:=\dpl_{\oface}(g)>0$. Then, there is a finite set $\{g_i\}$ and constants $c_i\in \bbZ[\frac1p]$ and $\face_i\in\Facet(r)$ such that
\begin{enumerate}
\item $\dpl_{\oface}(g)>\dpl_{\oface_i} (g_i)$ for each $i$, and
\item $\chf_{gG_{\face}^+}\equiv\sum_i c_i \chf_{g_iG_{\face_i}^+}\mod{[\CaH,\CaH]}$. 
\end{enumerate}
\end{lemma}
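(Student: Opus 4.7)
The strategy is a displacement descent, modeled after Corollary 4.2.9 of \cite{De2}. Since $\dpl(g)=0<m$, convexity of the displacement function on the CAT(0)-space $\Bd(\overline G)$ will let me locate a nearby point where $g$ acts with strictly smaller displacement, and I then decompose $g G_{\face}^+$ as a finite sum of cosets organized around that better point.

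First I normalize $g$. Pick $y\in\oface$ attaining the minimum $m$, let $\bS$ be a maximal $F$-split torus with $y\in\Apt(\bS)$, and let $C$ be an alcove with $y\in\overline C\subset\Apt(\bS)$. By the Bruhat decomposition write $g=b_1 n b_2$ with $b_i\in G_C$, $n\in N_G(\bS)$, and set $g':={}^{b_1^{\inv}}\!g=n b_2 b_1$. Since $y\in\oface\cap\overline C$, Remark \ref{rmks: disp properties} yields $b_1\in\stab_G(\face)$, whence $\chf_{gG_{\face}^+}\equiv\chf_{g'G_{\face}^+}\pmod{[\CaH,\CaH]}$; moreover $g'y\in\Apt(\bS)$ and $\dpl_{g'}(y)=m$.

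Next I find a direction of descent. Because $\dpl(g)=0$, there exists $w\in\Bd(G)$ with $\dpl_g(w)=0$, so by convexity of $\dpl_{g'}$ on $\Bd(\overline G)$ the value of $\dpl_{g'}$ drops strictly below $m$ at every interior point of the geodesic from $\overline y$ to $\overline w$. Approximating the initial tangent of this geodesic by a rational direction emanating from $y$ inside $\Apt(\bS)$ yields $\lambda\in\bX_\ast(\bS,F)$ and arbitrarily small $\epsilon>0$ such that $y_\epsilon:=y+\epsilon\lambda$ satisfies $\dpl_{g'}(y_\epsilon)<m$. For $\epsilon$ small enough, $y\in\overline{\face_\epsilon}$, where $\face_\epsilon\in\Facet(r)$ is the generalized $r$-facet through $y_\epsilon$.

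Finally I carry out the coset decomposition. Using the product expansion of $G_{\face}^+$ into affine root subgroups and partitioning the relevant affine roots $\psi$ by the sign of $\langle\dot\psi,\lambda\rangle$ (as in Lemma \ref{lem: DeB-depth-descent}), I absorb into $G_{\face_\epsilon}^+$ those subgroups with $\langle\dot\psi,\lambda\rangle\ge 0$ and enumerate the finitely many remaining cosets, obtaining
\[
\chf_{g'G_{\face}^+}\;=\;\sum_{j}\chf_{g'_j\, G_{\face_{\epsilon,j}}^+},
\]
with each $g'_j\in g'G_{\face}^+$ and each $\face_{\epsilon,j}$ a $G_{\face}^+$-translate of $\face_\epsilon$. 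Since $g'_j$ differs from $g'$ by an element of $G_{\face}^+\subset G_{y,r^+}$ (which moves points of $\Bd(\overline G)$ by arbitrarily small amounts), we get $\dpl_{g'_j}(y_\epsilon)\le\dpl_{g'}(y_\epsilon)+O(\epsilon)<m$ for sufficiently small $\epsilon$, hence $\dpl_{\overline{\face_{\epsilon,j}}}(g'_j)<m$. The main obstacle is precisely this last step: $\face$ and $\face_\epsilon$ need not be nested among generalized $r$-facets, so the decomposition demands careful affine-root bookkeeping together with the verification that perturbation by $G_{\face}^+$ preserves the strict displacement drop, in parallel with the proof of Corollary 4.2.9 in \cite{De2}.
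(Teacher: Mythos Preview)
Your approach diverges from the paper's at the crucial decomposition step, and the gap there is real.

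The paper does not move toward a fixed point of $g$; it moves along the geodesic $[y,g'y]=[y,ny]$, which lies entirely in $\Apt(\bS)$ since both $y$ and $ny$ do. This choice is not cosmetic. Letting $F_1^\ast$ be the first generalized $r$-facet that $(y,ny]$ enters, one has $F^\ast\subset\overline{F_1^\ast}$ and hence $G_{F^\ast}^+\subset G_{F_1^\ast}^+$, with $G_{F_1^\ast}^+=G_{F^\ast}^+\cdot\prod_{\psi\in Q}U_\psi$ for $Q=\{\psi:\psi|_{F_1^\ast\cap\Apt}>r,\ \psi|_{F^\ast\cap\Apt}=r\}$. The key algebraic fact, special to this direction, is that $(n^{-1}\psi)(y)=\psi(ny)>r$, so ${}^{n^{-1}}U_\psi\subset G_{F^\ast}^+$. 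Averaging by conjugation over $\prod_{\psi\in Q} U_\psi/U_\psi^+$, each conjugate $h^{-1}g'G_{F^\ast}^+h$ collapses to $g'G_{F^\ast}^+h$, and the sum is exactly $c\cdot\mathbbm{1}_{g'G_{F_1^\ast}^+}$ with $c$ a negative power of~$p$. No enumeration of cosets of a \emph{smaller} subgroup is involved.

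Your direction $\lambda$, toward a fixed point $w$ of $g$, need not lie in $\Apt(\bS)$, so a rational approximation inside $\Apt(\bS)$ is not guaranteed to decrease $\dpl_{g'}$. More seriously, if $y\in F^\ast$ and $y\in\overline{F^\ast_\epsilon}$ then $G_{F^\ast}^+\subset G_{F^\ast_\epsilon}^+$, so $g'G_{F^\ast}^+$ sits \emph{inside} a single $G_{F^\ast_\epsilon}^+$-coset; your displayed equality $\chf_{g'G_{F^\ast}^+}=\sum_j\chf_{g'_jG_{F^\ast_{\epsilon,j}}^+}$ cannot hold as an identity of functions in the form you describe. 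The conjugation trick via ${}^{n^{-1}}U_\psi\subset G_{F^\ast}^+$ is precisely what converts this containment into a congruence modulo $[\CaH,\CaH]$, and it has no analogue for an arbitrary~$\lambda$. Finally, the assertion that elements of $G_{F^\ast}^+\subset G_{y,r^+}$ move $y_\epsilon$ by $O(\epsilon)$ is false: such elements fix $y$ but can displace nearby points by amounts bounded below by combinatorial constants of the building. The paper also handles separately the case where the minimum is attained only on $\overline{F^\ast}\setminus F^\ast$ (your appeal to Remark~\ref{rmks: disp properties}(1) requires $y\in F^\ast$), by first refining $\chf_{gG_{F^\ast}^+}$ over a sub-facet and then applying the argument above.
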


\proof We divide the proof in two cases.

\smallskip

\noindent
\underline{Case 1} There is $y\in\face$ with $\dpl_g(y)=\dpl_{\oface}(g)$.

\smallskip

Choose $\bS$ and $C$ as in Remarks \ref{rmks: disp properties}, and keep the notation from there.
Write $[y,ny]$ for the geodesic in $\Apt(S)$ between $y$ and $ny$. Observe that $[y,ny]\cap\oface=\{y\}$ (see the proof of \cite[Lemma 4.2.6]{De2} for details). 
Let $\face_1\in\Facet(r)$ be the first generalized $r$-facet that $(y,g'y]=(y,ny]$ passes through when traveling from $y$ to $ny$. Note that $\oface_1\cap\Apt\neq\emptyset$. Since $\face\subset\oface_1$ and thus $\face\cap\Apt\subset\oface_1\cap\Apt$. Note that $G^+_{\face}\subset G^+_{\oface_1}$. Let
\[
Q:=\{\psi\in\Psi(\bS,\bG,F)\mid\psi|(\face_1\cap \Apt)>r\textrm{ and }\psi|(\face\cap\Apt)=r\}.
\]
Then, 
\[
G^+_{F_1^\ast}=G^+_{F^\ast}\cdot\prod_{\psi\in Q}U_\psi
\]
where the product over $Q$ may be taken in any order. Fix $\psi\in Q$. Since $(n^{-1}\psi)(y)=\psi(ny)>r$, we have
$^{n^{-1}}\! U_\psi=U_{n^{-1}\psi}\subset G^+_{F^\ast}$. We also have $U^+_\psi\subset G^+_{F^\ast}$. 
By (2) of the above remarks, we have
\begin{align*}
\chf_{g\cdot G^+_{F^\ast}}&\equiv \chf_{g'\cdot G^+_{F^\ast}}\\
&\equiv c\cdot\sum_{\overline h\in(\prod_{\psi\in Q}U_\psi)/(\prod_{\psi\in Q}U_\psi^+)} \chf_{h^{-1}nb_2b_1G^+_{F^\ast}h}\\
&\equiv c\cdot\sum_{\overline h\in(\prod_{\psi\in Q}U_\psi)/(\prod_{\psi\in Q}U_\psi^+)} \chf_{g'G^+_{F^\ast} h}\\
&\equiv c \cdot\chf_{g'G^+_{F^\ast_1}}\mod{[\CaH,\CaH]}
\end{align*}
where the constant $c=\sharp\left((\prod_{\psi\in Q}U_\psi)/(\prod_{\psi\in Q}U_\psi^+)\right)^{-1}\in \bbZ[\frac1p]$.
Note that for all $z\in F^\ast_1\cap (y, g'y)\neq\emptyset$, we have from \cite[Lemma 4.2.1]{De2} that $\dpl_{g'}(z)<\dpl_{g'}(y)$. Hence, by (4) of the above remarks, we have
\[
\min_{x\in\overline{F^\ast_1}}\dpl_{g'}(x)<\dpl_{g'}(y)=\min_{x\in\overline{F^\ast}}\dpl_{g}(x).
\]

\noindent
\underline{Case 2} For all $x\in\face$, $\dpl_g(x)>\dpl_{\oface}(g)$.

\smallskip

Choose $y\in\oface\setminus\face$ such that $\dpl_{\oface}(g)=\dpl_g(y)$. There exists $\oface_1\in\Facet(r)$ such that $y\in\face_1$ and $\face_1\subset\oface$. Then, 
\[
\chf_{gG_{\face}^+}=\sum_{\alpha\in G_{\face}^+/G_{\face_1}^+}\chf_{g\alpha G_{\face_1}^+}.
\]
Note that for all $\alpha\in G_{\face}^+$, we have $\dpl_g(x)=\dpl_{g\alpha}(x)$ for all $x\in\oface$ and 
$\dpl_{g\alpha}(y)=\dpl_{\oface_1}(g\alpha)$ for all $\alpha\in G_{\face}^+$.
Now, one can apply Case 1 to each summand $\chf_{g\alpha G^+_{\face_1}}$, $\alpha\in G_{\face}^+/G^+_{\face_1}$, and $\oface_1$. \qed

\subsection{\bf Descents}

\begin{proposition}\label{prop: descent}
Let $g G_{y,r^+}\subset G^\rig$. Write $s:=\dth(y,g)$ and $t:=\dth(g)$.  
There exist a finite indexing set $\{i\}$, $\{g_i\}\subset G$ and $c_i\in\bbZ[\frac1p]$ such that $\chf_{gG_{y,r^+}}\equiv\sum_i c_i \chf_{g_iG_{y_i,r^+}}\pmod{[\CaH,\CaH]}$ with $\dth(g_i)=\dth(g_i,y_i)\le r$ or $g_i\in Z\cdot(G_{r^+}\cap G_{y_i,r})$. 
\end{proposition}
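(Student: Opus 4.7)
My plan is to reduce the proposition to the two descent lemmas already proved (Lemma \ref{lem: descend by disp} and Lemma \ref{lem: DeB-depth-descent}) via a case analysis on the pair $(s,t) = (\dth(y,g), \dth(g))$ and an induction on $r-s$. The easy cases come first: if $s > r$, then $g \in Z\, G_{y,r^+}$, which directly exhibits $g \in Z(G_{r^+} \cap G_{y,r})$, so we take $g_1 = g$, $y_1 = y$, $c_1 = 1$; and if $s = t \le r$, the pair $(g,y)$ already satisfies $\dth(g_1) = \dth(g_1, y_1) \le r$ with the same trivial choice. Since $s \le t$ always, only the range $s \le r < t$ or $s < t \le r$ requires real work. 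Note also that, because $\dth(y,g)$ is defined, $g \in Z\,\stab_G(y)$ and hence $\dpl_g(y) = 0$, forcing $\dpl_{\oface}(g) = 0$ where $\face$ is the $r$-facet containing $y$; thus Lemma \ref{lem: descend by disp} is not directly applicable in the hard case, and Lemma \ref{lem: DeB-depth-descent} will carry the proof.

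For the main case $s < t$, $s \le r$, I would induct on $r - s$, which takes values in $\frac{1}{e}\bbZ_{\ge 0}$ by the Moy--Prasad structure. The key geometric input is: since $y$ is not an optimal point for $g$, the image of $g$ in the quotient $G_{y,s}/G_{y,s^+}$ must be, modulo central translations, ``nilpotent'' in the reductive-quotient sense; otherwise a leading semisimple contribution would force $\dth(y,g) = \dth(g)$. Using the Jordan decomposition $g = \gamma u$ provided by Hypothesis \ref{hyp: JD} and expanding $\gamma$ as a good product mod $s^+$ via Proposition \ref{prop: good product}, I can transport the setup inside the twisted Levi $\bH = C_\bG(\gamma_{\le s})$ and then apply Lemma \ref{lem: DeB-depth-descent} to the unipotent residue. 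This yields $h \in G_y$ and $\lambda \in \bX_*(\bS,F)$ such that, for small $\epsilon > 0$, $\,^h g \in G_{y+\epsilon\lambda, s^+}$, and condition (2) of that lemma controls the change of filtration subgroup: $\,^h g \cdot G_{y+\epsilon\lambda, r^+} \subset \,^{G_{y,r-s}}(\,^h g \cdot G_{y,r^+})$.

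Using $\chf_X \equiv \chf_{\,^h X} \pmod{[\CaH,\CaH]}$ and the fact that the coset $\,^h g\, G_{y,r^+}$ decomposes into finitely many cosets of $G_{y+\epsilon\lambda, r^+}$ (after averaging, with coefficients in $\bbZ[\tfrac1p]$ coming from the index of the filtration subgroups, as in the proof of Lemma \ref{lem: descend by disp}), I would rewrite $\chf_{g G_{y,r^+}}$ modulo $[\CaH,\CaH]$ as a finite $\bbZ[\tfrac1p]$-linear combination of terms $\chf_{g_j\, G_{y+\epsilon\lambda, r^+}}$ with each $g_j \in G_{y+\epsilon\lambda, s^+}$. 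Each new pair has strictly larger depth $\dth(y+\epsilon\lambda, g_j) > s$, so the induction hypothesis closes the argument. The main obstacle is the adaptation of Lemma \ref{lem: DeB-depth-descent} from its strictly unipotent setting to general compact-mod-center $g$; the resolution is to push the lemma inside the centralizer $H$ of the good part $\gamma_{\le s}$ using the good-product machinery of Section \ref{sec: ss}, much as DeBacker handles the Lie-algebra version in \cite[\S4.2]{De2}, and then transport back to $\bG$ via Lemma \ref{lem: good-element-1} and Lemma \ref{lem: approximation}.
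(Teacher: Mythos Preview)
Your core strategy is correct and matches the paper's Case~3: iteratively apply Lemma~\ref{lem: DeB-depth-descent} to push $\dth(y_i,g_i)$ upward until it reaches $\dth(g_i)$ or exceeds $r$. However, the good-product/twisted-Levi detour you propose is vacuous here and somewhat obscures what is going on. Since $\dth(\gamma)=\dth(g)=t>s$ (observation (6) after Definition~\ref{defn: depth}), the truncation $\gamma_{\le s}$ is just the central factor $\gamma_z$, so $H=C_G(\gamma_{\le s})=G$ and there is nothing to transport. The paper proceeds more directly: after stripping a central element, one has $g\in G_{y,s}\cap G_{s^+}\subset \mathcal U\,G_{y,s^+}$, and Lemma~\ref{lem: DeB-depth-descent} applies to $g$ itself, with no appeal to Jordan decomposition or Lemmas~\ref{lem: good-element-1}, \ref{lem: approximation}. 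In particular, your phrase ``unipotent residue'' should be read as ``$g$, viewed as an element of $\mathcal U\,G_{y,s^+}$'' rather than the literal unipotent part $u$, over which you have no filtration control at $y$. One minor organizational difference: before invoking the lemma the paper first passes from $y$ to a point $x\in\overline{F^\ast}$ where $\dth(\,\cdot\,,g)$ is maximal on the closure of the $r$-facet, decomposing $\chf_{gG_{y,r^+}}$ into cosets of $G_{x,r^+}$; this is convenient for the termination argument but not essential.

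Your dismissal of Lemma~\ref{lem: descend by disp} is correct for the proposition \emph{as stated}: writing $s=\dth(y,g)$ presupposes $g\in Z\,\stab_G(y)$, hence $\dpl_g(y)=0$. The paper nonetheless includes a separate case treating $g\in Z\,\stab_G(x)\setminus Z\,\stab_G(y)$ via the displacement lemma. This is because Corollary~\ref{3.3.3} needs the conclusion for an arbitrary coset $gG_{y,r^+}\subset G^\rig$, where $g$ need not lie in $Z\,\stab_G(y)$; so the paper's proof is in effect establishing slightly more than the proposition literally asserts.
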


Note that $s\le t$. Note also that if $s<r$, we have $\dth(y,g)=\dth(y,g')$ for all $g'\in g G_{y,r^+}$.

\proof We prove the statement in 3 cases below. Without loss of generality, we may assume that $g$ is compact.

\medskip

\underline{Case 1} $s=t=0$ or $s>r$. 

Done since $\chf_{gG_{y,r^+}}$ already satisfies the required condition. In particular, when $s>r$, $gG_{y,r^+}=z G_{y,r^+}$ for some $z\in Z$.

\medskip

\underline{Case 2} $\dth(g)=0$ and $g\in Z\,\stab_G(x)\setminus Z\,\stab_G(y)$.

\smallskip

In this case, $\dpl_g(y)>0$. Let $\face\in\Facet(r)$ with $y\in\oface$.

If $\dpl_{\oface}(g)=0$, from Case 1 in Lemma \ref{lem: descend by disp}, we may assume that there is $z\in\oface$ such that $\dpl_g(z)=0$. Then,  $gG_{y,r^+}=gG_{z,r^+}\subset\stab_G(z)$, which reduces to the Case 3 below. 

Now, let $\dpl_{\oface}(g)>0$. By applying Lemma \ref{lem: descend by disp} repeatedly, we can write
\[
\chf_{gG_{y,r^+}}=\chf_{gG^+_{\face}}=\sum_i c_i\chf_{g_iG^+_{\face_i}}
\]
with $\{i\}$ a finite set, $\face_i\in\Facet(r)$, and $\dpl_{\oface_i}(g_i)=0$ for all $i$.
More precisely, applying Lemma \ref{lem: descend by disp} repeatedly, we find a sequence of triples $(g_j,\face_j,y_j)\in G\times\Facet(r)\times\Bd(G)$, $j\in\mathbb N$ such that $y_j\in\face_j$ and 
\[
\dpl_{g_j}(y_j)=\dpl_{\face_j}(g_j)>\dpl_{\face_{j+1}}(g_{j+1})=\dpl_{g_{j+1}}(y_{j+1})>0.
\]
Since $\dpl_{g_j}(y_j)$ is a discrete decreasing sequence in $\bbR_{\ge0}$, for sufficiently large $j$, $\dpl_{g_j}(y_j)=0$ (see the proof of \cite[Theorem 4.1.4-(1)]{De2}).

\medskip

\underline{Case 3} $0\le s\le t$.  

If $t=s$, it is done.

Now, suppose that $t>s$. Then, $g\in G_{y,s}\cap \mathcal UG_{y,s^+}$. 
We claim that there is a finite set $\{i\}$ such that 
\begin{align}\label{tag: claim}
\chf_{g G_{y,r^+}}\equiv\sum c_i\chf_{g_i G_{x_i,r^+}}\mod{[\CaH,\CaH]}
\end{align}
with $c_i\in\bbZ[\frac1p]$ and $\dth(x_i,g_i)>s$. 

Let $F^\ast\subset\Bd(G)$ be the generalized $r$-facets with $y\in F^\ast$. Let $\bS$ be maximal maximally $F$-split tori in $\bG$ so that $F^{\ast}\subset\Apt(\bS,F)$. Since $\dth(\ ,g)$ is continuous on $\Bd(G)$, for fixed $g\in G$, $\dth(\ ,g)$ attains its maximum on $\overline F^{\ast}$ since the image of $\overline F^\ast$ in the reduced building is compact. That is, there is $s\in\mathbb R_{\ge0}$ and $x\in\overline F^\ast$ such that
\begin{align*}
s=\dth(x,g)\ge \dth(w, g)
\end{align*}
for all $w\in\overline F^\ast$.

Since $x\in\overline F^\ast$, we have $\lieG_{x,r^+}\subset\lieG^{+}_{F^\ast}\subset\lieG_{F^\ast}\subset\lieG_{x,r}$. Therefore,
\[
\chf_{gG_{y,r^+}}=\sum_{\overline\alpha\in G_{y,r^+}/ G_{x,r^+}}\chf_{g\cdot \alpha G_{x,r^+}}.
\]
Note that each $\alpha\in G_{x,r}$ and $\dth(x,g)=\dth(x,g\alpha)=s$. 

Now, for each $g\alpha$, we will show that there is a $z_\alpha\in \Bd(G)$ such that $\chf_{g\alpha G_{x,r^+}}$ is a linear combination of characteristic functions of the form $\chf_{g_\alpha G_{z_\alpha,r^+}}$ with $g_\alpha\in G_{z_\alpha,s^+}$ (see (\ref{tag: step1}) below). 

Since we are treating each $g\alpha$, for simplicity of notation, we may write $u$ for $g\alpha$.
Note that $u\in G_{x,s}\cap G_{s^+}\subset \mathcal U G_{x,s^+}$. Then, we can find $v\in{}^{G_x}\!(uG_{x,s^+})$ and $\lambda$ as in Lemma \ref{lem: DeB-depth-descent} so that for sufficiently small $\epsilon>0$, we have (i) $vG_{x,s^+}\subset G_{x+\epsilon\lambda, s^+}$ and
(ii) $vv'G_{x+\epsilon\lambda, r^+}\subset {}^{G_{x,(r-s)}}\!(vv'G_{x,r^+})$ for any $v'\in G_{x,s^+}$. 
Fix $\epsilon$ satisfying (i) and (ii). Write $z=x+\epsilon\lambda$. Let 
\[
\mathfrak a:=\{h(v)h^{-1}(v)^{-1}\mid h\in G_{x,(r-s)/(r-s)^+}\}G_{x,r^+}.
\]
We have $\mathfrak a\subset G_{x,r}\subset G_{x,s^+}$ and $G_{z,r^+}\subset\mathfrak a$. Then,
\begin{align*}\tag{2}\label{tag: step1}
\chf_{u G_{x, r^+}}&\equiv\chf_{vG_{x,r^+}} \equiv c\cdot\sum_{\overline h\in G_{x,(r-s)}/G_{x,(r-s)^+}} \chf_{h vh^{-1}G_{x,r^+}}\\
&\equiv c\cdot \chf_{v\cdot\mathfrak a} \equiv c\cdot\sum_{\overline\beta\in\mathfrak a/G_{z,r^+}}\chf_{ v\beta G_{z,r^+}} \mod [\CH, \CH],
\end{align*}
where $c=\left(\sharp(G_{x,(r-s)}/G_{x,(r-s)^+})\right)^{-1}\in\bbZ[\frac1p]$.
For all $\beta\in\mathfrak a$, we have $\chf_{ v\beta G_{z,r^+}}\in C (G_{z,s^+}/G_{z,r^+})$, and the claim is now proved.

Now one can repeat the process for $\chf_{g_i G_{x_i,r^+}}$ in (\ref{tag: claim}) until each coset satisfy $\dth(g_i)=\dth(x_i,g_i)$ or $\dth(x_i,g_i)\ge r$. This is a finite process as in the proof of 
\cite[Theorem 4.1.4-(1)]{De2}. We omit the details.
\qed

\begin{remarks}\rm
We observe the following: Suppose $g G_{y,t^+}$ satisfies $t:=\dth(g)=\dth(y,g)$. Then, $y\in \Bd(G')$ where $G'=C_G(\gamma)$. Write $g=\gamma\cdot u$ such that $\gamma$ is a  $\bG$-good mod center element and $u\in G'_{d(\gamma)^+}$ (see  Corollary \ref{cor: dec Gc}). Then,
\begin{enumerate}
\item $\dth(g)=\dth(\gamma)=\dth(y,g)=\dth(y,\gamma)$,
\item $y\in\Bd(\bG',F)$,
\item $\gamma\in Z G'_{y,t}$, \ $u\in G'_{y,t}\cap G'_{t^+}$,
\item $\chf_{g G_{y,r^+}}\in C_c\left(\left(\gamma\cdot(G'_{y,t}\cap G'_{t^+}) G_{y,r^+}\right)/G_{y,r^+}\right)$.
\end{enumerate}
The first equality $\dth(g)=\dth(\gamma)$ follows from Lemma \ref{lem: good-building-2}.
(2) is Lemma \ref{lem: fixed pts} and (3) is Corollary \ref{cor: good-building-2}.
\end{remarks}

The following is a corollary of Proposition \ref{prop: descent}:

\begin{corollary}\label{3.3.3}
$\overline\CaH_{r^+}^{\rig}=\bigoplus_{[\gamma]_\rg\in\CaS_r^\rg}\overline\CaH_{[\gamma]_\rg}^{G,\flat}$.
\end{corollary}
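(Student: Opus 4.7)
My plan is to combine the descent statement in Proposition~\ref{prop: descent} with the $G$-domain partition of Corollary~\ref{cor: dec Gc}, using Lemma~\ref{domain} to convert partitions into direct sums.

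For spanning, I first note that $G^\rig$ is itself a $G$-domain --- indeed Corollary~\ref{cor: dec Gc} already presents $G^\rig$ as a disjoint union of clopen $G$-domains --- so Lemma~\ref{domain} ensures that every class in $\overline{\CaH}^\rig_{r^+}$ admits a representative $f\in\CaH^\rig\cap\CaH_{r^+}$. I would then write $f=\sum_i c_i\,\chf_{g_iG_{y_i,r^+}}$ with every coset $g_iG_{y_i,r^+}\subset G^\rig$, and apply Proposition~\ref{prop: descent} to each summand. Modulo $[\CaH,\CaH]$ this reduces $f$ to a linear combination of $\chf_{g'G_{y',r^+}}$ satisfying either (i) $\dth(g')=\dth(y',g')\le r$, or (ii) $g'\in Z\cdot(G_{r^+}\cap G_{y',r})$. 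In case (i), the Remarks following Proposition~\ref{prop: descent} give $g'=\gamma u$ with $\gamma$ a $\bG$-good mod center element of depth $b_1=\dth(g')$, $y'\in\Bd(H^1)$ for $H^1=C_G(\gamma)$, and $u\in H^1_{y',b_1}\cap H^1_{b_1^+}$; matching against the definition yields $\chf_{g'G_{y',r^+}}\in\CaH^{G,\flat}_{[\gamma]_\rg}$. In case (ii), writing $g'=zh$ with $z\in Z$ and $h\in G_{r^+}\cap G_{y',r}$ shows $\chf_{g'G_{y',r^+}}\in\CaH^{G,\flat}_{[z]_\rg}$.

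For directness, I will verify that each $\CaH^{G,\flat}_{[\gamma]_\rg}$ is supported inside the $G$-domain $X_{[\gamma]_\rg}$ attached to $[\gamma]_\rg$ in Corollary~\ref{cor: dec Gc}; Lemma~\ref{domain} then delivers the direct sum. For $[\gamma]_\rg=[\gamma_z\gamma_1]_\rg\in\CaS^\rg_r\setminus[Z]^\rg_r$ with $\gamma_1$ of depth $b_1$, a typical support element is $\gamma_z\gamma_1\cdot h\cdot k$ with $x\in\Bd(H^1)$, $h\in H^1_{x,b_1}\cap H^1_{b_1^+}$, and $k\in G_{x,r^+}$. Applying Lemma~\ref{lem: approximation} with $t=b_1$ and $s>r$ (then taking $s\to r^+$) gives
\[
\gamma_z\gamma_1\cdot h\cdot G_{x,r^+}\;\subset\;{}^{G_{x,(r-b_1)^+}}\!\bigl(\gamma_z\gamma_1\cdot h\cdot H^1_{x,r^+}\bigr)\;\subset\;{}^G\!\bigl(\gamma_z\gamma_1\cdot H^1_{b_1^+}\bigr)\;=\;{}^G\!\bigl(\gamma G^\gamma_{\dth(\gamma)^+}\bigr)\;=\;X_{[\gamma]_\rg}.
\]
For $[z]_\rg\in[Z]^\rg_r$, the inclusion $(G_{x,r}\cap G_{r^+})G_{x,r^+}\subset G_{r^+}$ noted after the definition of $\CaH^{G,\flat}_{t,s}(\gamma_z)$ (via \cite[Corollaries~3.7.8 and~3.7.10]{AD}) places the support of $\CaH^{G,\flat}_{[z]_\rg}$ inside $zG_{r^+}=X_{[z]_\rg}$. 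Since the $X_{[\gamma]_\rg}$ are pairwise disjoint by Corollary~\ref{cor: dec Gc}, Lemma~\ref{domain} completes the argument.

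The step requiring the most care is the support inclusion for non-central classes: a priori $\CaH^{G,\flat}_{[\gamma]_\rg}$ only sits in the product $\gamma\cdot G^\gamma_{\dth(\gamma)^+}\cdot G_{x,r^+}$, and one must invoke Lemma~\ref{lem: approximation} to absorb the trailing $G_{x,r^+}$-factor by $G$-conjugation into the $H^\gamma$-piece, placing the coset within the intended $G$-domain ${}^G(\gamma G^\gamma_{\dth(\gamma)^+})$.
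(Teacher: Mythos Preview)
Your argument follows exactly the paper's approach (the paper's proof is the one line ``It follows from Corollary~\ref{cor: dec Gc}, Proposition~\ref{prop: descent} and Lemma~\ref{domain}''), and your directness argument---verifying via Lemma~\ref{lem: approximation} that each $\CaH^{G,\flat}_{[\gamma]_\rg}$ is supported inside the corresponding $G$-domain from Corollary~\ref{cor: dec Gc}, then invoking Lemma~\ref{domain}---is correct and fills in detail the paper omits.

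There is one point in the spanning direction that is not quite justified as written. You assert that Lemma~\ref{domain} ``ensures that every class in $\overline{\CaH}^{\rig}_{r^+}$ admits a representative $f\in\CaH^{\rig}\cap\CaH_{r^+}$.'' Lemma~\ref{domain} gives the cocenter splitting $\overline\CaH=\overline\CaH^{\rig}\oplus\overline\CaH(G\setminus G^{\rig})$, so for $f\in\CaH_{r^+}$ with $[f]\in\overline\CaH^{\rig}$ one obtains $[f]=[f|_{G^{\rig}}]$; but $f|_{G^{\rig}}$ need not lie in $\CaH_{r^+}$, because a single coset $gG_{y,r^+}$ can meet both $G^{\rig}$ and its complement. (For instance, in $SL_2$ take $g=\begin{pmatrix}0&\pi^{-N}\\-\pi^{N}&0\end{pmatrix}$ with $N>r$ and $y$ the standard vertex: $\mathrm{tr}(g)=0$ so $g\in G^{\rig}$, yet $gh$ has trace $\pi^{n-N}$ of negative valuation for suitable $h\in G_{y,r^+}$.) So Lemma~\ref{domain} alone does not produce the representative you want, and Proposition~\ref{prop: descent} as stated requires $gG_{y,r^+}\subset G^{\rig}$, not merely $g\in G^{\rig}$.

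The paper's one-line proof is silent on this same point. A clean way to close the gap: start with any $f\in\CaH_{r^+}$ representing the class; separate the cosets into those meeting $G^{\rig}$ and those contained in $G\setminus G^{\rig}$. The latter contribute an element of $\overline\CaH(G\setminus G^{\rig})$. For the former, choose the coset representative in $G^{\rig}$ and run the displacement descent of Lemma~\ref{lem: descend by disp} and Case~2 of Proposition~\ref{prop: descent} (whose proof only uses $\dpl(g)=0$, i.e.\ $g\in G^{\rig}$) until each resulting coset lies in some $Z\,\stab_G(z)$ and hence entirely in $G^{\rig}$; then Proposition~\ref{prop: descent} applies as you wrote. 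Finally invoke Lemma~\ref{domain} to see that the $G\setminus G^{\rig}$ piece is zero in $\overline\CaH$. Alternatively one may first pass to representatives in $\CaH(G,I_n)$ (cf.\ \S\ref{howe-diss}), where $G^{\rig}$ is $I_n$-bi-invariant by the Newton decomposition of \cite{hecke-1}.
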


\proof 
It follows from Corollary \ref{cor: dec Gc}, Proposition \ref{prop: descent} and  Lemma \ref{domain}.
\qed

\subsection{\bf Descent via induction}

\begin{proposition}\label{good-i}
Let $\g=\gamma_z\gamma_1$ be a $\bG$-good mod center element of depth $t \ge 0$, where $\gamma_z\in Z(G)$ and $\gamma_1$ is $\bG$-good of depth $t$. Let $s \in \tilde \BR$ with $s>t$. We set $G'=C_G(\g)$. Then the map $\mathbbm{1}_{h G'_{x, s}} \mapsto \frac{\mu_{G'}(G'_{x, s})}{\mu_G(G_{x, s})} \mathbbm{1}_{\g h G_{x, s}}$ for $x \in \Bd(G')$ and $h \in G'_{x, t} \cap G'_{t^+}$ induces a well-defined map 
$$\bar i_{\g, s}: \overline\CH^{G', \, \flat}_{t, s} \to \overline\CH^{G}_{t, s}(\gamma_z).$$ 
Moreover, for any $s' \in \tilde \BR$ with $s' \ge s$, we have the following commutative diagram 
\[\xymatrix{
\overline\CH^{G', \, \flat}_{t, s} \ar[r]^-{\bar i_{\g, s}} \ar@{^{(}->}[d] & \overline\CH^{G}_{s} \ar@{^{(}->}[d] \\
\overline\CH^{G', \, \flat}_{t, s'} \ar[r]^-{\bar i_{\g, s'}} & \overline\CH^{G}_{s'}.}
\]
\end{proposition}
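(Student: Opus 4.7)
The plan is to verify well-definedness at three levels: (i) the chain-level assignment takes values in $\CH^G_{t,s}(\gamma_z)$; (ii) it descends to a map on cocenters; and (iii) it intertwines the inclusion maps as $s$ increases to $s'$. The central tool throughout is Lemma \ref{lem: approximation}.

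First, the support condition is straightforward: since $x\in\Bd(G')$ and $\gamma_1$ is $\bG$-good of depth $t$, Lemma \ref{lem: fixed pts} gives $\gamma_1\in G'_{x,t}\subset G_{x,t}$, so $\gamma h=\gamma_z\gamma_1 h\in\gamma_z G_{x,t}$ for any $h\in G'_{x,t}\cap G'_{t^+}$, and right $G_{x,s}$-invariance is automatic. Writing $c_{x,s}:=\mu_{G'}(G'_{x,s})/\mu_G(G_{x,s})$, the image $c_{x,s}\,\mathbbm{1}_{\gamma h G_{x,s}}$ thus lies in $\CH^G_{t,s}(\gamma_z)$.

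Second, for the cocenter descent, by Lemma \ref{f-g-f} it suffices to check $i_{\gamma,s}(f-{}^{g'}f)\in[\CH^G,\CH^G]$ for $g'\in G'$: since $G'=C_G(\gamma)$, conjugation by $g'$ commutes with left multiplication by $\gamma$, and the unimodularity of $G,G'$ preserves the measure ratio $c_{x,s}$, yielding $i_{\gamma,s}({}^{g'}f)={}^{g'}(i_{\gamma,s}(f))$, a commutator in $\overline{\CH}^G$. Independence of the choice of representative $x$ within a generalized $s$-facet of $\Bd(G')$ follows---via Lemma \ref{lem: approximation} and Lemma \ref{lem: good element centralizer}---from comparing the two conjugation closures ${}^{G_{x,s-t}}(\gamma h G'_{x,s})$ and ${}^{G_{x',s-t}}(\gamma h G'_{x',s})$ of the common $G'$-coset.

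Third, the commutative diagram reduces to showing in $\overline{\CH}^G$
\[
c_{x,s}\,\mathbbm{1}_{\gamma h G_{x,s}}\equiv c_{x,s'}\sum_{g'\in G'_{x,s}/G'_{x,s'}}\mathbbm{1}_{\gamma h g' G_{x,s'}}.
\]
Expanding the LHS as $c_{x,s}\sum_{g_0\in G_{x,s}/G_{x,s'}}\mathbbm{1}_{\gamma h g_0 G_{x,s'}}$, I would apply Lemma \ref{lem: approximation} to produce, for each $g_0$, elements $g\in G_{x,s-t}$ and $g'\in G'_{x,s}$ with $\gamma h g_0=g(\gamma h g')g^{-1}$; since $g$ normalizes $G_{x,s'}$ (Moy--Prasad filtration groups at $x$ being normal in $G_{x,0}$ and $s'>s-t$), conjugation yields $\mathbbm{1}_{\gamma h g_0 G_{x,s'}}\equiv\mathbbm{1}_{\gamma h g' G_{x,s'}}$ in $\overline{\CH}^G$. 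A counting argument using the Iwahori-type factorization of $G_{x,s}/G_{x,s'}$ relative to the tame twisted Levi $G'$ at $x\in\Bd(G')$ then shows each coset $g'G'_{x,s'}$ arises with multiplicity $[G_{x,s}:G_{x,s'}]/[G'_{x,s}:G'_{x,s'}]$, matching the ratio $c_{x,s}/c_{x,s'}$ exactly.

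The main obstacle will be the equidistribution/counting in the third step: the assignment $g_0\mapsto g'G'_{x,s'}$ produced by Lemma \ref{lem: approximation} is non-canonical, and verifying that it descends to an equidistributed map on coset spaces requires matching it against the Moy--Prasad decomposition of $G_{x,s}$ with respect to $G'$---that is, an Iwahori-style factorization compatible with the tame twisted Levi structure at points $x\in\Bd(G')$.
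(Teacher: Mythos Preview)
Your plan correctly identifies Lemma \ref{lem: approximation} as the engine, but there is a genuine gap at the chain-level well-definedness step --- precisely the issue the paper's Remark after the proposition flags as the main difficulty.

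You check independence of $x$ \emph{within a single generalized $s$-facet} of $\Bd(G')$, but the generators $\mathbbm{1}_{hG'_{x,s}}$ for points $x$ in \emph{different} $s$-facets satisfy further linear relations (e.g.\ refinement relations $\mathbbm{1}_{hG'_{y,s}}=\sum_{h'}\mathbbm{1}_{h'G'_{x,s}}$ whenever $G'_{x,s}\subset G'_{y,s}$), and nothing in your argument shows these are respected. Your third step, even if the counting were carried out, compares levels $s$ and $s'$ at a \emph{fixed} $x$; it does not handle relations among several base points $x_i$ at a fixed $s$. Your invocation of Lemma \ref{f-g-f} is also too quick: the pieces $f_i$ it produces lie only in $\CH^{G'}(G'_{t^+})$, not in $\CH^{G',\flat}_{t,s}$, so $i_{\gamma,s}$ cannot be applied to them as stated.

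The paper closes all three gaps with one identity. Iterating Lemma \ref{lem: approximation} gives, for any sufficiently small open compact $K$,
\[
\mathbbm{1}_{\gamma hG_{x,s}}\ \equiv\ \frac{\mu_G(G_{x,s})}{\mu_G(G'_{x,s}K)}\,\mathbbm{1}_{\gamma hG'_{x,s}K}\pmod{[\CH,\CH]},
\]
and one then observes that $\dfrac{\mu_{G'}(G'_{x,s})}{\mu_G(G'_{x,s}K)}=\dfrac{\mu_{G'}(G'\cap K)}{\mu_G(K)}$ is \emph{independent of $x$}. Choosing a common $K$ for all $x_i$ in a relation $\sum_i a_i\mathbbm{1}_{h_iG'_{x_i,s}}=0$ immediately gives $\sum_i a_i c_{x_i,s}\mathbbm{1}_{\gamma h_iG_{x_i,s}}\equiv 0$, so the map is well defined into $\overline\CH^G_s$. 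The same formula with two choices of $K$ gives the $s\to s'$ compatibility without any counting or equidistribution argument. Finally, the compatibility lets one enlarge $s$ to $s'$ so that the $f_i$ from Lemma \ref{f-g-f} do lie in $\CH^{G',\flat}_{t,s'}$, at which point your conjugation argument (${}^{g'}(i_{\gamma,s'}f)=i_{\gamma,s'}({}^{g'}\!f)$ for $g'\in G'$) finishes the cocenter descent. In short, your outline would converge to the paper's proof once you try to make the equidistribution step rigorous; the paper short-circuits it by proving the displayed formula first.
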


\begin{remark}\rm
Note that the elements $\mathbbm{1}_{h G'_{x, s}}$ for $x \in \Bd(\g)$ and $h \in G'_{x, t} \cap G'_{t^+}$ are not linearly independent in $\CaH$. Thus the map $\mathbbm{1}_{h G'_{x, s}} \mapsto \frac{\mu_{G'}(G'_{x, s})}{\mu_G(G_{x, s})} \mathbbm{1}_{\g h G_{x, s}}$  may not give a well-defined map from $\CH^{G', \, \flat}_{t, s}$ to $\CH^{G}_{s}$. However, we will see that it induces a well-defined map on the cocenter. 
\end{remark}

\begin{proof}[Proof of Proposition \ref{good-i}]
Let $x \in \Bd(\g)$ and $h \in G'_{x, t} \cap G'_{t^+}$. Let $\e=s-t$. By Lemma \ref{lem: approximation}, for any $k \in G_{x, s}$, there exists $g \in G_{x, \e}$ such that ${}^g(\g h k) \in \g h G'_{x, s}$. Note that $(G_{x, \e}, G'_{x, s}) \subset G_{x, s+\e}$. Then $G'_{x, s} G_{x, s+\e}$ is a subgroup of $G$ and ${}^g (G'_{x, s} G_{x, s+\e})=G'_{x, s} G_{x, s+\e}$. We have ${}^g(\g h k G'_{x, s} G_{x, s+\e})=\g h G'_{x, s} G_{x, s+\e}$ and $$\mathbbm{1}_{\g h k G'_{x, s} G_{x, s+\e}} \equiv \mathbbm{1}_{\g h G'_{x, s} G_{x, s+\e}} \mod [\CH, \CH].$$

Similarly, for any $n \in \BN$ and $k \in G_{x, s+n \e}$, we may write $G'_{x, s}=\sqcup_l h_l G'_{x, s+n \e}$ for $h_l \in G'_{x, s}/G'_{x,s+n\epsilon}$. Since $G'_{x, s+n \e} \subset G_{x, s+n \e}$, we have $h k G'_{x, s}=\sqcup_l h h_l k_l G'_{x, s+n \e}$ for $k_l=h_l^{-1} k h_l \in G_{x, s+n\e}$. Note that $h h_l \in G'_{x, t} \cap G'_{t^+}$. By Lemma \ref{lem: approximation}, there exists $g_l \in G_{x, (n+1) \e}$ such that ${}^{g_l}\!(\g h h_l k_l) \in \g h h_l G'_{x, s+n \e}$. Note that $(G_{x, (n+1) \e}, G'_{x, s+n \e}) \subset G_{x, s+(2n+1) \e}$. Then $G'_{x, s+n \e} G_{x, s+(2n+1) \e}$ is a subgroup of $G$ and ${}^{g_l} (G'_{x, s+n \e} G_{x, s+2 (n+1)\e})=G'_{x, s+n \e} G_{x, s+(2n+1)\e}$. We have ${}^{g_l} (\g h h_l k_l G'_{x, s+n \e} G_{x, s+(2n+1)\e})=\g h h_l G'_{x, s+n \e} G_{x, s+(2n+1)\e}$ and \begin{align*} \mathbbm{1}_{\g h k G'_{x, s} G_{x, s+(2n+1)\e}} &=\sum_l \mathbbm{1}_{\g h_l k_l G'_{x, s+n \e} G_{x, s+(2n+1)\e}} \\ & \equiv \sum_l \mathbbm{1}_{\g h h_l G'_{x, s+n \e} G_{x, s+(2n+1)\e}} \\ &\equiv\mathbbm{1}_{\g h G'_{x, s} G_{x, s+(2n+1) \e}} \mod [\CH, \CH].\end{align*} 

In particular, we have 
\begin{align*} 
\mathbbm{1}_{\g h  G_{x, s}} & \equiv \frac{\mu_G(G_{x, s})}{\mu_G(G'_{x, s} G_{x, s+\e})} \mathbbm{1}_{\g h G'_{x, s} G_{x, s+\e}} \\ & \equiv \cdots \\ &  \equiv \frac{\mu_G(G_{x, s})}{\mu_G(G'_{x, s} G_{x, s+(2n+1)\e})} \mathbbm{1}_{\g h G'_{x, s} G_{x, s+(2n+1)\e}} \mod [\CH, \CH]. 
\end{align*}

Moreover, for any open compact subgroup $K$ with $G_{x, s+(2n+1) \e} \subset K \subset G_{x, s+n \e}$, we have $\mathbbm{1}_{\g h  G'_{x, s} K} \equiv \frac{\mu_G(G'_{x, s} K)}{\mu_G(G'_{x, s} G_{x, s+(2n+1)\e})} \mathbbm{1}_{\g h G'_{x, s} G_{x, s+(2n+1) \e}} \mod [\CH, \CH]$ and hence 
\[
\tag{a} \mathbbm{1}_{\g h  G_{x, s}} \equiv \frac{\mu_G(G_{x, s})}{\mu_G(G'_{x, s} K)} \mathbbm{1}_{\g h G'_{x, s} K} \mod [\CH, \CH].
\]
Now suppose that $\sum_{i} a_i \mathbbm{1}_{h_i  G'_{x_i, s}}=0 \in \CH^{G', \, \flat}_{t, s}$. We may choose a sufficiently large $n \in \BN$ such that the subgroup generated by $G_{x_i, s+(2n+1) \e}$ for all $i$ is contained in $K:=\cap_j G_{x_j, s+n \e}$. Then, $G_{x_i, s+(2n+1) \e} \subset K \subset G_{x_i, s+n \e}$ for all $i$. By definition, $\sum_{i} a_i \mathbbm{1}_{h_i  G'_{x_i, s}}$ is mapped to $\sum_{i} a_i \frac{\mu_{G'}(G'_{x_i, s})}{\mu_G(G_{x_i, s})} \mathbbm{1}_{\g h_i  G_{x_i, s}}$. By (a), $$\sum_{i} a_i \frac{\mu_{G'}(G'_{x_i, s})}{\mu_G(G_{x_i, s})} \mathbbm{1}_{\g h_i  G_{x_i, s}} \equiv \sum_{i} a_i \frac{\mu_{G'}(G'_{x_i, s})}{\mu_G(G'_{x_i, s} K)} \mathbbm{1}_{\g h_i  G'_{x_i, s} K} \mod [\CH, \CH].$$

Note that $\frac{\mu_{G'}(G'_{x_i, s})}{\mu_G(G'_{x_i, s} K)}=\frac{\mu_{G'}(G' \cap K)}{\mu_G(K)}$ for all $i$. Moreover, we have $\sum_{i} a_i \mathbbm{1}_{\g h_i  G'_{x_i, s} K}=0$ as $\sum_{i} a_i \mathbbm{1}_{h_i  G'_{x_i, s}}=0$. Therefore, under the map in the proposition, the image of $\sum_{i} a_i \mathbbm{1}_{h_i  G'_{x_i, s}}$ equals to $0$. Hence it gives a well-defined map from $i_{\g, s}: \CH^{G', \, \flat}_{t, s} \to \overline\CH^G_s$. 

Now let $\e'=s'-t$. Let $f \in \CH^{G', \, \flat}_{t, s} \subset \CH^{G', \, \flat}_{t, s'}$. We show that $i_{\g, s}(f)=i_{\g, s'}(f)$. 

Suppose that $f=\sum_i a_i \mathbbm{1}_{h_i G'_{x_i, s}}$. We choose an open compact subgroup $K$ such that there exists $n, n' \in \BN$ with $G_{x_i, s+(2n+1) \e} \subset K \subset G_{x_i, s+n \e}$ and $G_{x_i, s'+(2n'+1) \e'} \subset K \subset G_{x_i, s'+n' \e'}$ for all $i$. By (a), we have that 
$$i_{\g, s}(f)
\equiv\sum_i a_i \frac{\mu_{G'}(G' \cap K)}{\mu_G(K)} \mathbbm{1}_{\g h_i  G'_{x_i, s} K}\mod{[\CH, \CH]}.
$$ 

We may write $f$ as $f=\sum_i \sum_{g' \in G'_{x_i, s}/G'_{x_i, s'}} a_i \mathbbm{1}_{h_i g' G'_{x_i, s}} \in \CH^{G', \, \flat}_{t, s'}$. Then by (a), we have 
\begin{align*} 
i_{\g, s'}(f) &\equiv\sum_i \sum_{g' \in G'_{x_i, s}/G'_{x_i, s'}} a_i \frac{\mu_{G'}(G' \cap K)}{\mu_G(K)} \mathbbm{1}_{\g h_i g' G'_{x_i, s'} K} \\ 
&\equiv\sum_i a_i \frac{\mu_{G'}(G' \cap K)}{\mu_G(K)} \mathbbm{1}_{\g h_i G'_{x_i, s} K} \equiv i_{\g, s}(f) \mod{[\CaH,\CaH]}.
\end{align*}

It remains to show that the map $i_{\g, s}: \CH^{G', \, \flat}_{t, s} \to \overline\CH^G_s$ factors through $\overline\CH^{G', \, \flat}_{t, s}$. 

Let $f \in \CH^{G', \, \flat}_{t, s} \cap [\CH^{G'}, \CH^{G'}]$. Then by definition, the support of $f$ is contained in the $G'$-domain $G'_{t^+}$. By Lemma \ref{f-g-f}, $f=\sum_i (f_i-{}^{g_i}\! f_i)$, where $f_i \in \CH(G')$ with support in $G'_{t^+}$ and $g_i \in G'$. Let $s' \in \tilde \BR$ with $s' \ge s$ and that $f_i \in \CH^{G', \, \flat}_{t, s'}$ for all $i$. Then $i_{\g, s}(f)=i_{\g, s'}(f)$. It remains to prove that for any $f' \in \CH^{G', \, \flat}_{t, s'}$ and $g \in G'$, we have $i_{\g, s'}(f')=i_{\g, s'}({}^g\! f')$. 

It suffices to consider the case where $f=\mathbbm{1}_{h G'_{x, s'}}$, where $x \in \Bd(\g)$ and $h \in G'_{x, t} \cap G'_{t^+}$. By definition, 
\begin{align*} 
i_{\g, s'}(\mathbbm{1}_{{}^g (h G'_{x, s'})}) 
&\equiv i_{\g, s'}(\mathbbm{1}_{g h g^{-1} G'_{gx, s'}}) \equiv \frac{\mu_{G'}(G'_{gx, s'})}{\mu_G(G_{g x, s'})} \mathbbm{1}_{\g g h g^{-1} G_{gx, s'}}\\ 
&\equiv\frac{\mu_{G'}(G'_{x, s'})}{\mu_G(G_{x, s'})} \mathbbm{1}_{\g h G_{x, s'}} \equiv i_{\g, s'}(\mathbbm{1}_{h G'_{x, s'}}) \mod{[\CaH,\CaH]}.
\end{align*} 
This finishes the proof. 
\end{proof}

\begin{theorem} \label{thm: bar i}
Let $\g \in G^\rig \cap G^\sms$ with a good product $\gamma=\gamma_z\gamma_{b_1}\cdots\gamma_{b_k}\gamma_+$, where each $\gamma_{b_i}$ is $\bG$-good of depth $b_i$ and $b_1<b_2<\cdots< b_k\le r$. Then the map $\mathbbm{1}_{h H^\gamma_{x, r^+}} \mapsto \frac{\mu_{H^\gamma}(H^\gamma_{x, r^+})}{\mu_G(G_{x, r^+})} \mathbbm{1}_{\g_{\le r} h G_{x, r^+}}$ for $x \in \Bd(\g_{\le r})$ and $h \in H^\gamma_{x, r} \cap H^\gamma_{r^+}$ induces a well-defined map $$\bar i_{\g, r^+}: \overline\CH^{{H^\gamma}, \, \flat}_{r, r^+} \to \overline\CH^{G}_{b_1, r^+}(\gamma_z).$$ 
In particular, the map $\bar i_{\g, r^+}$ is independent of the good product expression $\gamma=\gamma_z\gamma_{b_1}\cdots\gamma_{b_k}\gamma_+$.
\end{theorem}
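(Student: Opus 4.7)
The plan is to proceed by induction on $k$, the number of $\bG$-good factors in the product, with Proposition \ref{good-i} serving as the elementary inductive step. The base case $k=0$ is trivial: here $\gamma=\gamma_z\in Z(G)$, $H^\gamma=G$, $\gamma_{\le r}=\gamma_z$, and the prescribed map is just left translation by the central element $\gamma_z$, which manifestly descends to the cocenter.

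For the inductive step, I would set $\gamma':=\gamma_{b_2}\cdots\gamma_{b_k}\gamma_+$, viewed as an element of $H^1:=C_G(\gamma_z\gamma_{b_1})$. By Lemma \ref{lem: good-element-1}(3), each $\gamma_{b_i}$ with $i\ge 2$ remains $\bH^1$-good, and the strictly decreasing chain $H^1\supsetneq H^2\supsetneq\cdots\supsetneq H^k=H^\gamma$ exhibits $\gamma'$ as a good product in $H^1$ with $k-1$ good factors and trivial central part. The inductive hypothesis then produces a map
\[
\bar i^{H^1}_{\gamma',r^+}\colon \overline\CaH^{H^\gamma,\flat}_{r,r^+}\longrightarrow \overline\CaH^{H^1}_{b_2,r^+}
\]
sending $\chf_{hH^\gamma_{x,r^+}}$ to $\tfrac{\mu_{H^\gamma}(H^\gamma_{x,r^+})}{\mu_{H^1}(H^1_{x,r^+})}\chf_{\gamma_{b_2}\cdots\gamma_{b_k}h\, H^1_{x,r^+}}$. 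Since $b_2>b_1$ forces $H^1_{x,b_2}\subset H^1_{x,b_1}\cap H^1_{b_1^+}$, the natural inclusion of function spaces yields a map $\overline\CaH^{H^1}_{b_2,r^+}\to \overline\CaH^{H^1,\flat}_{b_1,r^+}$, and I would compose with $\bar i_{\gamma_z\gamma_{b_1},r^+}\colon \overline\CaH^{H^1,\flat}_{b_1,r^+}\to \overline\CaH^G_{b_1,r^+}(\gamma_z)$ from Proposition \ref{good-i} (applied with $t=b_1$, $s=r^+$, $G'=H^1$). A direct calculation of the multiplicative constants yields exactly the formula in the statement, and I set $\bar i_{\gamma,r^+}$ to be this composition.

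For independence of the chosen good product, Lemma \ref{lem: for dec} guarantees that the chain $(H^i)$ and the depths $(b_i)$ are intrinsic to $\gamma$, so the only ambiguity lies in the individual $\gamma_{b_i}$; this amounts to replacing $\gamma_{\le r}$ by $\gamma_{\le r}\cdot u$ for some $u\in H^\gamma_{r^+}$. Since $u\in H^\gamma=C_G(\gamma_{\le r})$ commutes with $\gamma_{\le r}$, we have $\chf_{\gamma_{\le r}u h G_{x,r^+}}=\chf_{\gamma_{\le r}(uh)G_{x,r^+}}$, and an approximation argument modeled on the well-definedness step at the end of the proof of Proposition \ref{good-i} --- using Lemma \ref{lem: approximation} iteratively through the centralizer chain to absorb $u$ via $G_{x,r^+}$-conjugation --- shows that the two images agree in the cocenter.

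The main obstacle I anticipate is the iterated approximation needed for this independence: even though each layer of the induction is controlled by Proposition \ref{good-i}, passing from a single good element to a product of several good elements means that the approximation step must be carried out along the full chain $G=H^0\supsetneq H^1\supsetneq\cdots\supsetneq H^k=H^\gamma$, with intermediate conjugating elements living in the correct depth filtrations $G_{x,r-b_i}$ at each stage. Properly setting up this iterated absorption --- and thereby extending Lemma \ref{lem: approximation} through the entire chain --- is the technical heart of the independence claim.
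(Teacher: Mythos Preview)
Your construction of $\bar i_{\gamma,r^+}$ as an iterated composite of the single-step maps from Proposition \ref{good-i} along the chain $H^k\subset H^{k-1}\subset\cdots\subset H^0=G$, together with the telescoping of the measure ratios, is exactly what the paper does. The paper writes it as a $k$-fold composition rather than an induction on $k$, but that is purely cosmetic.

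The divergence is in the independence argument, and here you are making things harder than they need to be. After assuming $\gamma=\gamma_{\le r}$, the paper simply observes that the explicit computation of the composite gives
\[
\bar i_{\gamma_{b_1},\ldots,\gamma_{b_k},r^+}\bigl(\chf_{hH^\gamma_{x,r^+}}\bigr)\equiv\frac{\mu_{H^\gamma}(H^\gamma_{x,r^+})}{\mu_G(G_{x,r^+})}\,\chf_{\gamma h G_{x,r^+}}\pmod{[\CaH,\CaH]},
\]
and the right-hand side visibly depends only on the product $\gamma=\gamma_z\gamma_{b_1}\cdots\gamma_{b_k}$, not on the individual factors. That single observation \emph{is} the independence proof. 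No iterated approximation through the chain, no re-running of Lemma \ref{lem: approximation}, is needed: once you know the composite is well-defined on the cocenter (which Proposition \ref{good-i} gives you at each step) and you have computed its value, the formula itself witnesses independence. Your anticipated ``technical heart'' --- absorbing the ambiguity $u\in H^\gamma_{r^+}$ via iterated conjugation --- is therefore unnecessary; the work was already done when you proved well-definedness, and the explicit formula cashes it in for free.
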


\begin{proof} Without loss of generality, we may assume $\gamma=\gamma_{\le r}$.
Set $\bH^i=C_{\bG}(\gamma_z\g_{b_1} \cdots \g_{b_i})$. Then by Proposition \ref{good-i}, we have well-defined maps 
\begin{gather*} 
\bar i^{H^{k-1}}_{\g_{b_k}, r^+}: \overline\CH^{{H^\gamma}, \, \flat}_{b_k, r^+} \to \overline\CH^{H^{k-1}}_{b_k, r^+}, 
\\ 
\bar i^{H^{k-2}}_{\g_{b_{k-1}}, r^+}: \overline \CH^{H^{k-1}, \, \flat}_{b_{k-1}, r^+} \to \overline \CH^{H^{k-2}}_{b_{k-1}, r^+}, \\ \cdots \\ 
\bar i^{G}_{\g_{b_1}, r^+}: \overline \CH^{H^1, \, \flat}_{b_1, r^+} \to \overline \CH^{G}_{b_1, r^+}(\gamma_z).
\end{gather*}

As $b_{i+1}<b_i$ for any $i$, we have that $\overline \CH^{H^i}_{b_{i+1}, r^+} \subset \overline \CH^{H^i, \, \flat}_{b_i, r^+}$. Set $\bar i_{\g_{b_1}, \g_{b_2}, \cdots, \g_{b_k}, r^+}=\bar i^{G}_{\g_{b_1}, r^+} \circ \cdots \circ \bar i^{H^{k-1}}_{\g_{b_k}, r^+}$. This is a well-defined map from $\overline \CH^{{H^\gamma}, \, \flat}_{b_k, r^+}$ to $\overline \CH^G_{b_1, r^+}$. In particular, we have $\bar i_{\g, r^+}: \overline \CH^{{H^\gamma}, \, \flat}_{r, r^+} \to \overline \CH^{G}_{b_1, r^+}$. 

For any $x \in \Bd(\gamma_{\le r})$ and $h \in H^\gamma_{x, r} \cap H^\gamma_{r^+}$. By definition, 
\begin{align*} 
\bar i_{\g_{b_1}, \g_{b_2}, \cdots, \g_{b_k}, r^+} & \left(\mathbbm{1}_{h H^\gamma_{x, r^+}}\right) \equiv \bar i_{\g_{b_1}, \g_{b_2}, \cdots, \g_{b_{k-1}}, r^+}\left(\frac{\mu_{{H^\gamma}}(H^\gamma_{x, r^+})}{\mu_{H^{k-1}}(H^{k-1}_{x, r^+})} \mathbbm{1}_{\g_{b_k} h H^{k-1}_{x, r^+}} \right) \\ 
& \equiv \bar i_{\g_{b_1}, \g_{b_2}, \cdots, \g_{b_{k-2}}, r^+} \left(\frac{\mu_{{H^\gamma}}(H^\gamma_{x, r^+})}{\mu_{H^{k-1}}(H^{k-1}_{x, r^+})} \frac{\mu_{H^{k-1}}(H^{k-1}_{x, r^+})}{\mu_{H^{k-2}}(H^{k-2}_{x, r^+})}  \mathbbm{1}_{\g_{b_{k-1}} \g_{b_k} h H^{k-2}_{x, r^+}} \right)  \\ 
& \equiv \cdots \\ & \equiv \frac{\mu_{{H^\gamma}}(H^\gamma_{x, r^+})}{\mu_{H^{k-1}}(H^{k-1}_{x, r^+})} \frac{\mu_{H^{k-1}}(H^{k-1}_{x, r^+})}{\mu_{H^{k-2}}(H^{k-2}_{x, r^+})} \cdots \frac{\mu_{H^1}(H^1_{x, r^+})}{\mu_{G}(G_{x, r^+})} \mathbbm{1}_{\g h G_{x, r^+}}  \\ & \equiv \frac{\mu_{{H^\gamma}}(H^\gamma_{x, r^+})}{\mu_G(G_{x, r^+})} \mathbbm{1}_{\g h G_{x, r^+}}\mod{[\CaH,\CaH]}.
\end{align*}
Thus the map $\bar i_{\g_{b_1}, \g_{b_2}, \cdots, \g_{b_k}, r^+}$ only depends on $\g$. This finishes the proof.
\end{proof}


\proof[Proof of Theorem \ref{JD-H}] 
We have shown in Corollary \ref{3.3.3} that $\overline\CaH_{r^+}^{\rig}=\bigoplus_{[\gamma]_\rg\in\CaS_r^\rg}\overline\CaH_{[\gamma]_\rg}^{G,\flat}$.

Let $\gamma_z\gamma_{b_1}$ be a $G$-good mod center element and $\chf_{gG_{y,r^+}}\in\CaH^{G,\flat}_{[\gamma_z\gamma_{b_1}]_\rg}$. By Proposition \ref{good-i}, $\bar i_{\g_{b_1}, r^+}: \overline \CH^{H^1, \, \flat}_{t, r^+} \to \overline \CH^{G}_{[\gamma_z\gamma_{b_1}]_\rg,r^+}$, where $H^1=C_G(\gamma_z\gamma_{b_1})$,  is a well defined surjective map. 
Write $g=\gamma_z\gamma_{b_1}u_1$ where $u_1\in H^1_{y,b_1}\cap H^1_{b_1^+}$.
Then, $\chf_{u_1H^1_{y,r^+}}\equiv \sum_{j}c_j\chf_{\gamma_{b_{2j}}u_{2j}H^1_{y_{2j},r^+}}$ for some $H^1$-good elements $\gamma_{b_{2j}}$ of depth $b_{2j}>b_1$, $y_{2j}\in\Bd(H^{2j})$ where $H^{2j}=C_{H^1}(\gamma_{b_{2j}})$, 
$c_j\in\bbZ[\frac1p]$ and $u_{2j}\in H^{2j}_{y_{2j}, b_{2j}}\cap H^{2j}_{b_{2j}^+}$. 
Note that there are constants $c'_j\in\bbZ[\frac1p]$ such that
\[
\sum_{j}c_j'\cdot\chf_{\gamma_{b_{2j}}u_{2j}H^1_{y_{2j},r^+}}\overset{\bar  i_{\g_{b_1}, r^+}}\longrightarrow
\sum_jc_j\cdot\chf_{\gamma_z\gamma_{b_1}\gamma_{b_{2j}}u_{2j}G_{y_{2j},r^+}}\equiv \chf_{gG_{y,r^+}}.
\]
Repeating the process to each summand, 
$\chf_{u_{2j}H^{2j}_{y,r^+}}\equiv \sum_k  c_k\cdot \chf_{\gamma_{b_{2jk}}u_{2jk}H^{2j}_{y_{2jk},r^+}}$ 
for some $H^{2j}$-good elements $\gamma_{b_{2jk}}$ of depth $b_{2jk}>b_{2j}$, $y_{2jk}\in\Bd(H^{2jk})$ 
where $H^{2jk}=C_{H^{2j}}(\gamma_{b_{2jk}})$ and 
$u_{2jk}\in H^{2jk}_{y_{2jk}, b_{2jk}}\cap H^{2jk}_{b_{2jk}^+}$. Now
\begin{align*}
&\sum_k \sum_jc_{jk}\cdot \chf_{\gamma_{b_{2jk}}u_{2jk}H^1_{y_{2jk},r^+}} \overset{\sum_j\bar  i_{\g_{b_{2j}}, r^+}}\longrightarrow 
\sum_{j}c_j'\chf_{\gamma_{b_{2j}}\gamma_{b_{2jk}}u_{2j}H^1_{y_{2j},r^+}}\\
&\overset{\bar  i_{\g_{b_1}, r^+}}\longrightarrow
\sum_jc_j\chf_{\gamma_z\gamma_{b_1}\gamma_{b_{2j}}\gamma_{b_{2jk}}u_{2j}G_{y_{2j},r^+}}\equiv \chf_{gG_{y,r^+}} \mod [\CH, \CH].
\end{align*}
Setting $b_i$ to be the min of depths appearing in summands in each $i$-th step. One can repeat the process until $b_i>r$. This is a finite step since $b_i$ forms an increasing discrete sequence.  Now, we proved that
$$\overline\CaH^\rig\subset\sum_{[\gamma] \in \CaS_r} \bar i_{\g, r^+} (\overline \CH^{C_G(\g), \, \flat}_{r, r^+})  \subset\sum_{[\gamma]\in\CaS_r}\overline\CaH_{[\gamma]}^{G,\flat} \subset \sum_{[\gamma]\in\CaS_r}\overline \CH_{r^+}({}^G\!(\gamma H^\gamma_{r^+})) \subset \overline\CH^\rig.$$ 
Therefore all the inclusions above are in fact equalities and for any $[\gamma]\in\CaS_r$, we have $$\bar i_{\g, r^+} (\overline \CH^{H^\gamma, \, \flat}_{r, r^+})=\overline\CaH_{[\gamma]}^{G,\flat}=\overline \CH_{r^+}({}^G\!(\gamma H^\gamma_{r^+})).$$

Also $\overline \CH^\rig=\sum_{[\gamma]\in\CaS_r}\overline \CH_{r^+}({}^G\!(\gamma H^\gamma_{r^+}))$. By Lemma \ref{domain}, this is a direct sum.
\qed

\smallskip

As shown in the proof, we have the following description of $\overline \CH_{r^+}({}^G\!(\gamma H^\gamma_{r^+}))$.
\begin{corollary}\label{cor: surj i}
Let $\bar i_{\g,r^+}$ be as in Theorem \ref{thm: bar i}. Then, $\bar i_{\g, r^+} (\overline \CH^{H^\gamma, \, \flat}_{r, r^+})=\overline\CaH_{[\gamma]}^{G,\flat}$.
\end{corollary}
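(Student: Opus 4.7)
The plan is to derive this corollary as a direct consequence of what was already established in the proof of Theorem \ref{JD-H}, using the direct-sum decomposition as the key organizing principle.

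First, I would verify the easy inclusion $\bar i_{\gamma, r^+}(\overline \CH^{H^\gamma, \flat}_{r, r^+}) \subset \overline\CaH_{[\gamma]}^{G,\flat}$. By Theorem \ref{thm: bar i}, the map $\bar i_{\gamma, r^+}$ sends the generator $\mathbbm{1}_{h H^\gamma_{x, r^+}}$ (for $x \in \Bd(H^\gamma)$, $h \in H^\gamma_{x, r} \cap H^\gamma_{r^+}$) to a scalar multiple of $\mathbbm{1}_{\gamma_{\le r} h G_{x, r^+}}$, and this latter function lies in $C_c((\gamma_{\le r}(H^\gamma_{x, r} \cap H^\gamma_{r^+}) G_{x, r^+})/G_{x, r^+})$, which is one of the defining pieces of $\overline \CH^{G,\flat}_{[\gamma]}$.

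For the reverse inclusion I would isolate from the proof of Theorem \ref{JD-H} the chain
$$\overline\CaH^\rig_{r^+} \subset \sum_{[\gamma]} \bar i_{\gamma, r^+}(\overline \CH^{H^\gamma, \flat}_{r, r^+}) \subset \sum_{[\gamma]} \overline\CaH_{[\gamma]}^{G,\flat} \subset \sum_{[\gamma]} \overline\CH_{r^+}({}^G\!(\gamma H^\gamma_{r^+})) = \overline\CaH^\rig_{r^+}.$$
The first inclusion is exactly what the iterative descent argument inside the proof of Theorem \ref{JD-H} produces: starting from $\chf_{g G_{y, r^+}}$, one writes $g$ as a good product $\gamma_z \gamma_{b_1} \cdots \gamma_{b_k} \gamma_{r^+}$ and pushes the tail successively into the image of $\bar i^G_{\gamma_{b_1}, r^+} \circ \cdots \circ \bar i^{H^{k-1}}_{\gamma_{b_k}, r^+}$ via Proposition \ref{prop: descent}. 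The middle inclusion is the easy inclusion just verified, and the final equality is Theorem \ref{JD-H}(1). All inclusions therefore collapse to equalities.

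Finally, since the outer sum is direct (Proposition \ref{prop: dec Gc} combined with Lemma \ref{domain}) and both $\bar i_{\gamma, r^+}(\overline \CH^{H^\gamma, \flat}_{r, r^+})$ and $\overline\CaH_{[\gamma]}^{G,\flat}$ live inside the single summand $\overline\CH_{r^+}({}^G\!(\gamma H^\gamma_{r^+}))$, the equality of total sums descends to equality of each individual summand, giving the corollary. There is no real obstacle here: all the substantive work sits inside the proof of Theorem \ref{JD-H}, and the corollary merely repackages which $[\gamma]$-component each element ends up in.
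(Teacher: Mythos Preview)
Your proposal is correct and follows exactly the paper's approach: the corollary is stated immediately after the proof of Theorem \ref{JD-H} with the remark ``As shown in the proof,'' and the equality $\bar i_{\gamma, r^+} (\overline \CH^{H^\gamma, \, \flat}_{r, r^+})=\overline\CaH_{[\gamma]}^{G,\flat}$ is derived there precisely by the chain of inclusions you wrote down, collapsed to equalities and then separated into individual summands via the direct-sum decomposition.
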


\section{\bf Jordan Decomposition of $\overline\CaH_{r^+}^\rig$}\label{sec: JD}

\subsection{The cosets $I^u_r$ and $I^d_r$}\label{4.1} Following \cite{De1, De2}, we set 
\begin{gather*} I_r(G)=\{(F^*, X); F^* \text{ is a generalized $r$-facet of } G, X \in G_{F^*}/G_{F^*}^+\}, \\
I^u_r(G)=\{(F^*, X) \in I_r(G); X=u G_{F^*}^+ \text{ for some unipotent element } u \in G_{F^*}\}.
\end{gather*}

By \cite[Corollary 3.7.10]{AD}, $I^u_r(G)=\{(F^*, X) \in I_r(G); X \subset G_{r^+}\}$. 

By \cite[Definition 5.3.4]{De1}, under Hypotheses (DB), to each pair $(F^*, X) \in I^u_r(G)$, there exists a unique unipotent conjugacy class of minimal dimension which intersects $X$. We denote this unipotent conjugacy class by $\CO(F^*, X)$. 

Finally, we define the distinguished cosets $I^d_r(G) \subset I^u_r(G)$ as in \cite[Definition 5.5.1]{De1} and the equivalence relation $\sim$ as in \cite[Definition 3.6.2]{De1}. By \cite[Theorem 5.6.1]{De1}, under Hypotheses (DB), the map $(F^*, X) \mapsto \CO(F^*, X)$ gives a bijection between $I^d_r(G)/\sim$ and the set $Cl^u(G)$ of unipotent conjugacy classes of $G$. 

We first prove that

\begin{proposition}\label{JD-4'}
Suppose Hypotheses (DB) and Hypothesis \ref{conver} hold. Then $\overline \CH_{[1]}^{G,\flat}$ is a free $\BZ[\frac{1}{p}]$-module with basis $\mathbbm{1}_{(F^*, X)}$, where $(F^*, X)$ runs over representatives in $I^d_r(G)/\sim$. 
\end{proposition}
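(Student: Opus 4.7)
The plan is to prove the statement in three steps: (1) span by distinguished cosets, (2) compatibility with the equivalence relation $\sim$, and (3) linear independence via unipotent orbital integrals. By construction, together with \cite[Corollary 3.7.10]{AD}, the module $\CaH_{[1]}^{G,\flat}=\CaH^{G,\flat}_{r,r^+}$ is $\mathbb Z[\tfrac1p]$-spanned by the indicator functions $\mathbbm{1}_{(F^*,X)}$ with $(F^*,X)\in I^u_r(G)$, so it suffices to reduce to the distinguished subset and then establish independence there.

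For step (1), given a non-distinguished $(F^*,X)\in I^u_r(G)$, I would pick $u\in X\cap\CaU$ and apply Lemma \ref{lem: DeB-depth-descent} to produce a conjugate $v\in{}^{G_{F^*}}(X)$ and a cocharacter $\lambda$ moving the support to a strictly larger generalized $r$-facet; combining this with the displacement-reduction Lemma \ref{lem: descend by disp} exactly as in the unipotent portion of Proposition \ref{prop: descent} writes $\mathbbm{1}_{(F^*,X)}$ modulo $[\CaH,\CaH]$ as a $\mathbb Z[\tfrac1p]$-linear combination of $\mathbbm{1}_{(F^{*\prime},X')}$ with $F^{*\prime}$ strictly larger than $F^*$. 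This is a direct transcription of DeBacker's argument in \cite[\S4]{De2} to the $\mathbb Z[\tfrac1p]$-coefficient setting, and terminates at distinguished cosets by the finiteness invoked in the proof of \cite[Theorem 4.1.4]{De2}.

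For step (2), the equivalence $\sim$ of \cite[Definition 3.6.2]{De1} is generated by $G$-conjugation and a refinement relation $(F^*,X)\sim(F^{*\prime},X')$ coming from inclusion in a common $r$-facet $F^{*\prime\prime}$. Conjugation is automatically trivial on the cocenter, and for the refinement relation I would decompose the coarser coset $\mathbbm{1}_{(F^{*\prime\prime},\cdot)}$ as a finite sum over $G_{F^*}^+/G_{F^{*\prime\prime}}^+$ of finer cosets and absorb the residual action into a commutator, using the same bookkeeping as in Proposition \ref{good-i}. This produces the equality of the two classes modulo $[\CaH,\CaH]$, so the class of $\mathbbm{1}_{(F^*,X)}$ depends only on the equivalence class in $I^d_r(G)/\sim$.

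Step (3) is where I expect the main obstacle. By Hypothesis \ref{conver} each orbital integral $O_u$ for $u\in\CaU$ is a well-defined invariant distribution and vanishes on $[\CaH,\CaH]$. Suppose $\sum_i c_i\mathbbm{1}_{(F^*_i,X_i)}\in[\CaH,\CaH]$ with the $(F^*_i,X_i)$ pairwise inequivalent distinguished cosets. Set $\CaO_i=\CaO(F^*_i,X_i)$; these are pairwise distinct unipotent orbits by \cite[Theorem 5.6.1]{De1}. Order the $\CaO_i$ so that $\dim\CaO_1$ is minimal. The decisive geometric fact is that $\CaO_1\cap X_j=\emptyset$ for $j\ne 1$: any unipotent class meeting $X_j$ has dimension at least $\dim\CaO_j$ by the minimality defining $\CaO_j$, and uniqueness of the minimal orbit together with $\CaO_1\ne\CaO_j$ forces $\CaO_1$ to miss $X_j$. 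Consequently $O_{u_1}(\mathbbm{1}_{(F^*_j,X_j)})=0$ for $j\ne 1$ while $O_{u_1}(\mathbbm{1}_{(F^*_1,X_1)})\ne 0$ since some $G$-conjugate of $u_1$ lies in $X_1$. Applying $O_{u_1}$ to the relation yields $c_1=0$, and iterating gives $c_i=0$ for all $i$; this is the place where both DeBacker's bijection and the minimal-dimension clause are essential, and getting the nonvanishing $O_{u_1}(\mathbbm{1}_{(F^*_1,X_1)})\ne 0$ is the step most deserving of care.
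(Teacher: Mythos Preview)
Your Step~3 is essentially the paper's argument and is fine. The problems are in Steps~1 and~2.

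\textbf{Step 1 does not work as written.} The depth-descent Lemma~\ref{lem: DeB-depth-descent} requires $s<r$, and the displacement Lemma~\ref{lem: descend by disp} requires $\dpl_{\oface}(g)>0$. Neither hypothesis is available for the cosets in $\CH^{G,\flat}_{[1]}$: by definition $X\in G_{F^*}/G_{F^*}^+$ sits at depth exactly~$r$ at $F^*$, and any $g\in X\subset G_{F^*}$ fixes $F^*$ pointwise, so $\dpl_{\oface}(g)=0$. You have cited the machinery of \cite[\S4]{De2} (transcribed here in \S\ref{sec: descents}), but that machinery is for pushing arbitrary cosets \emph{into} $\CH^{G,\flat}_{[1]}$; it does nothing once you are already there. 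The relevant DeBacker input is \cite[\S2]{De2}, and the mechanism is entirely different: one uses the orbit structure. The paper's Lemma~\ref{O-O'} shows that if $(F_2^*,X_2)\subset(F_1^*,X_1)$ with the same associated orbit $\CO$, then decomposing $\mathbbm{1}_{X_1}=\sum_Y\mathbbm{1}_Y$ over $Y\in X_1/G_{F_2^*}^+$, every $Y$ with $\CO(F_2^*,Y)=\CO$ is a $G_x^+$-conjugate of $X_2$ (by \cite[Corollary~5.2.3]{De1}), and the remaining $Y$ have strictly larger orbit (by \cite[Corollary~5.2.5]{De1}); counting the former gives a $p$-power. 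Since every $(F^*,X)\in I^u_r$ lies under some distinguished $(F_1^*,X_1)$, inverting the unit $p^n$ and inducting on the orbit closure order (using finiteness of unipotent classes from Hypotheses~(DB)) yields the span.

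\textbf{Step 2 is overstated.} You claim that $\sim$-equivalent distinguished cosets give the \emph{same} class in $\overline\CH$. The paper does not prove this, and the bookkeeping you point to in Proposition~\ref{good-i} is not applicable (there is no good element $\gamma$ here to conjugate by). What Corollary~\ref{equi-class} actually gives is $\mathbbm{1}_{X_1}\in p^n\mathbbm{1}_{X_2}+(\text{higher-orbit terms})+[\CH,\CH]$ for some $n\in\BZ$. This is enough for the spanning argument---$p^n$ is a unit in $\BZ[\tfrac1p]$ and the higher-orbit terms are handled by the same induction---but it does \emph{not} say the class is independent of the representative. Indeed the proposition only asserts that \emph{any fixed choice} of representatives gives a basis, not that the basis element is canonical.
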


We adapt the strategy of \cite[\S 2]{De2}. While the invariant distributions (with complex coefficients) are considered in \cite{De2}, here we consider the cocenter of $\CH$ and need to work the coefficients in $\BZ\left[\frac{1}{p}\right]$. 
 


\begin{lemma}\label{O-O'}
Let  $\CO$ be a unipotent conjugacy class of $G$ and $(F_1^*, X_1), (F_2^*, X_2) \in I^u_r(G)$ are two pairs associated to $\CO$ (i.e., $\CO=\CO(F_1^*, X_1)=\CO(F_2^*, X_2)$) such that $F_2^* \subset \overline{F_1^*}$ and $X_2 \subset X_1$. Then in $\overline \CH$, we have $$\mathbbm{1}_{X_1} \in p^n \mathbbm{1}_{X_2}+\sum_{(F^*, X) \in I^u_r(G) \text{ with } \CO(F^*, X)>\CO} \BZ\left[\frac{1}{p}\right] \mathbbm{1}_X+[\CH, \CH] \quad \text{ for some } n \in \BN.$$
\end{lemma}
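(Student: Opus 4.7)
The plan is to refine $X_1$ by decomposing it into cosets of the finer filtration group $G_{F_2^*}^+$, classify each piece by its associated unipotent orbit, and collapse the good pieces onto $X_2$ using conjugation invariance in the cocenter.

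First I extract from $X_2 \subset X_1$ the inclusion $G_{F_2^*}^+ \subseteq G_{F_1^*}^+$ (with index a power of $p$, since both are pro-$p$), and partition $X_1 = \bigsqcup_i X^{(i)}$ into $G_{F_2^*}^+$-cosets. Using the compatibility of Moy--Prasad filtration groups implied by $F_2^* \subset \overline{F_1^*}$ to place each $X^{(i)} \subset G_{F_2^*}$, together with $X^{(i)} \subset X_1 \subset G_{r^+}$, one obtains $(F_2^*, X^{(i)}) \in I^u_r(G)$ for every $i$, and so $\mathbbm{1}_{X_1} = \sum_i \mathbbm{1}_{X^{(i)}}$ in $\CH$.

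Next I stratify by the associated orbit $\CO_i := \CO(F_2^*, X^{(i)})$. The minimality defining $\CO_i$ and the inclusion $X^{(i)} \subset X_1$ force $\CO_i \geq \CO$, with equality iff $X^{(i)}$ meets $\CO$. The cosets with $\CO_i > \CO$ (call them bad) feed directly into the higher-orbit sum in the claim. For the good cosets (those meeting $\CO$), let $N := G_{F_1^*}^+$; then $N$ preserves $X_1$ (as $X_1$ is a coset of the normal subgroup $G_{F_1^*}^+$ in $G_{F_1^*}$) and, via the filtration compatibility, normalizes $G_{F_2^*}^+$. Hence $N$ permutes the $\{X^{(i)}\}$ through the finite $p$-quotient $N/G_{F_2^*}^+$, and modulo $[\CH,\CH]$ each $N$-orbit on $\{X^{(i)}\}$ contributes a $p$-power multiple of the characteristic function of any representative.

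The main obstacle, and the heart of the argument, is to show that the good cosets form a single $N$-orbit containing $X_2$. Via the Moy--Prasad identification of $G_{F_2^*}/G_{F_2^*}^+$ with the $\bbF_{p^n}$-points of a connected reductive group, this transitivity claim reduces to a statement about the action of a residue-field unipotent radical on the $\CO$-stratum of unipotent elements in $X_1/G_{F_2^*}^+$, a Richardson/Bruhat-type argument analogous to those in DeBacker's parameterization of unipotent orbits \cite[\S2]{De2}. Granted this transitivity, the sum of good cosets equals $p^n \mathbbm{1}_{X_2}$ modulo $[\CH,\CH]$, where $n$ is the $p$-adic valuation of the single orbit size, and combining with the bad-coset contributions yields the stated decomposition.
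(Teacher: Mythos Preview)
Your approach is essentially that of the paper: decompose $X_1$ into $G_{F_2^*}^+$-cosets, stratify by the associated orbit, and collapse the good (orbit-$\CO$) cosets via conjugation by a pro-$p$ group. The difference, and the gap, lies in the transitivity step that you yourself flag as the heart of the argument.

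Your acting group $N = G_{F_1^*}^+ = G_{x,r^+}$ is too small for the transitivity you need, and the justification you sketch is incorrect: for $r>0$ the quotient $G_{F_2^*}/G_{F_2^*}^+ = G_{y,r}/G_{y,r^+}$ is an \emph{abelian} group (isomorphic to the Lie algebra quotient $\lieG_{y,r}/\lieG_{y,r^+}$), not the $\bbF_q$-points of a connected reductive group, so no Richardson/Bruhat-type argument is available there. The paper instead chooses $x \in F_1^*$ with $G_x \subset \stab_G(F_2^*)$ (such $x$ exists by \cite[Lemma~3.2.17]{De1}) and uses the larger pro-$p$ group $G_{x,0^+}$; then \cite[Corollary~5.2.3]{De1} (the relevant result is in \cite{De1}, not \cite{De2}) gives directly that every good coset $Y$ equals ${}^g X_2$ for some $g \in G_{x,0^+}$. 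Since $[G_{x,0^+}, G_{x,r}] \subset G_{x,r^+}$, this group still preserves $X_1$, and it normalizes $G_{F_2^*}^+$ because $G_x \subset \stab_G(F_2^*)$; hence it acts transitively on the good cosets, and pro-$p$ gives the $p$-power count. A smaller point: your argument that $\CO_i \geq \CO$ via ``minimality'' only yields $\dim \CO_i \geq \dim \CO$; the closure-order inequality is the content of \cite[Corollary~5.2.5]{De1}.
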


\begin{proof}
We follow the argument in \cite[Lemma 2.6.2]{De2}, almost verbatim. We have that $$G_{F_2^*}^+ \subset G_{F_1^*}^+ \subset G_{F_1^*} \subset G_{F_2^*}.$$ We write $\mathbbm{1}_{X_1}$ as 
$$\mathbbm{1}_{X_1}=\sum_{Y \in X_1/G_{F_2^*}^+} \mathbbm{1}_Y.$$

Let $Y \in X_1/G_{F_2^*}^+$. Then $Y \subset X_1 \subset G_{r^+}$. By \cite[Corollary 5.2.5]{De1}, we have $\CO(F_2^*, Y) \ge \CO$. The case where $\CO(F_2^*, Y)>\CO$ is obvious. It remains to consider the case where $\CO(F_2^*, Y)=\CO$. By \cite[Lemma 3.2.17]{De1}, there exists $x \in F_1^*$ so that $G_x \subset \stab_G(F_2^*)$. By \cite[Corollary 5.2.3]{De1}, we have $Y={}^g X_2$ for some $g \in G_x^+$, where $x \in F^*$.  In particular, $\mathbbm{1}_Y \equiv \mathbbm{1}_{X_2} \mod [\CH, \CH]$. Set $\Gamma=\{Y \in X_1/G_{F_2^*}^+; \CO(F_2^*, Y)=\CO\}$. Then $G_x^+$ acts transitively on $\Gamma$. Since $G_x^+$ is a pro-$p$ group, the cardinality of $\Gamma$ is a power of $p$. The statement is proved. 
\end{proof}

The following results easily from Lemma \ref{O-O'} and the definition of $\sim$ (see the proof of \cite[Lemma 2.6.5]{De2}). 

\begin{corollary}\label{equi-class}
Let $(F_1^*, X_1), (F_2^*, X_2) \in I^d_r(G)$ with $(F_1^*, X_1) \sim (F_2^*, X_2)$. Then $$\mathbbm{1}_{X_1} \in p^n \mathbbm{1}_{X_2}+\sum_{(F^*, X) \in I^u_r(G) \text{ with } \CO(F^*, X)>\CO} \BZ\left[\frac{1}{p}\right] \mathbbm{1}_X+[\CH, \CH] \quad \text{ for some } n \in \BZ.$$
\end{corollary}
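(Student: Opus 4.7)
The plan is to verify the claim for each of the elementary moves generating the equivalence relation $\sim$, and then chain them by induction on the length of a realizing sequence. From \cite[Definition 3.6.2]{De1}, the relation $\sim$ on $I^d_r(G)$ is generated by two types of moves: (i) $G$-conjugation $(F_2^*, X_2) = ({}^g F_1^*, {}^g X_1)$ for some $g \in G$, and (ii) facet-refinement moves that connect $(F_1^*, X_1)$ and $(F_2^*, X_2)$ through a common intermediate pair $(F^*, X) \in I^u_r(G)$ with $\CO(F^*, X) = \CO$, satisfying facet-closure and coset-inclusion relations with each $(F_i^*, X_i)$.

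First, I would dispatch the conjugation case: since $\mathbbm{1}_{X_2} = {}^g \mathbbm{1}_{X_1}$, we have $\mathbbm{1}_{X_2} \equiv \mathbbm{1}_{X_1} \pmod{[\CH,\CH]}$ by the standard identity $f \equiv {}^g f$ in the cocenter (cf.\ \cite[Proposition 1.1]{hecke-1}), so the claim holds trivially with $n = 0$ and no correction term. For a refinement move, I would apply Lemma \ref{O-O'} to each of the two pairs $((F_1^*, X_1), (F^*, X))$ and $((F_2^*, X_2), (F^*, X))$, obtaining
\begin{align*}
\mathbbm{1}_{X_1} &\equiv p^{n_1} \mathbbm{1}_X + R_1 \pmod{[\CH,\CH]}, \\
\mathbbm{1}_{X_2} &\equiv p^{n_2} \mathbbm{1}_X + R_2 \pmod{[\CH,\CH]},
\end{align*}
with $R_1, R_2 \in \sum_{\CO(F^{\prime *}, X') > \CO} \BZ[\tfrac{1}{p}]\, \mathbbm{1}_{X'}$. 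Since $p^{n_2}$ is a unit in $\BZ[\tfrac{1}{p}]$, eliminating $\mathbbm{1}_X$ between these two congruences yields $\mathbbm{1}_{X_1} \equiv p^{n_1 - n_2} \mathbbm{1}_{X_2} + (R_1 - p^{n_1 - n_2} R_2) \pmod{[\CH, \CH]}$, which is of the desired shape with $n = n_1 - n_2 \in \BZ$.

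The general case then follows by a straightforward induction on the length of a chain of elementary moves realizing the equivalence $(F_1^*, X_1) \sim (F_2^*, X_2)$. At each step the exponent accumulates additively in $\BZ$ (negative values being legitimate since we work in $\BZ[\tfrac{1}{p}]$), and the error terms remain in $\sum_{\CO(F^{\prime *}, X') > \CO} \BZ[\tfrac{1}{p}]\, \mathbbm{1}_{X'}$ by the transitivity of the partial order on unipotent orbits: further rewriting of a strictly-higher-orbit term can only produce terms supported on orbits at least as large, hence still strictly above $\CO$.

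The main obstacle I anticipate is pinning down the precise form of the generating moves of $\sim$ from \cite[Definition 3.6.2]{De1}, and in particular verifying that each refinement move is of a shape to which Lemma \ref{O-O'} applies \emph{in both directions} at the common intermediate pair $(F^*, X)$---possibly after preceding it by a conjugation move to align the facets. Once that alignment is confirmed, the cancellation argument above is routine, and the induction closes without further issue.
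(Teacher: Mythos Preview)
Your proposal is correct and matches the paper's approach: the paper simply cites Lemma~\ref{O-O'} together with the definition of $\sim$ and refers to the proof of \cite[Lemma 2.6.5]{De2}, which is precisely the chain-of-elementary-moves argument you have spelled out. Your caveat about checking the exact form of the generating moves in \cite[Definition 3.6.2]{De1} is well placed, but once that is verified (the intermediate pair lies either above or below both $(F_i^*,X_i)$ in the facet-closure order, so Lemma~\ref{O-O'} applies in the appropriate direction), the elimination and induction go through exactly as you describe.
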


\proof[Proof of Proposition \ref{JD-4'}] Note that $\overline \CH_{[1]}^{G,\flat}$ is spanned by $\mathbbm{1}_{(F^*, X)}$ for $(F^*, X) \in I^u_r(G)$. By definition, for any $(F^*, X) \in I^u_r(G)$, there exists $(F_1^*, X_1) \in I^d_r(G)$ such that $F^* \subset \overline{F_1^*}$ and $X \subset X_1$. By Lemma \ref{O-O'}, given a unipotent conjugacy class $\CO$, for any $(F^*, X) \in I^u_r(G)$ with $\CO(F^*, X)=\CO$, the element $\mathbbm{1}_{(F^*, X)}$ in $\overline \CH$ is spanned by $\mathbbm{1}_{(F_1^*, X_1)}$, where $(F_1^*, X_1) \in I^d_r(G)$ with $\CO(F_1^*, X_1)=\CO$ and $(F^{\prime\ast}, X') \in I^u_r(G)$ with $\CO(F^{\prime\ast}, X')>\CO$. Here, we denote $\chf_X$ by $\chf_{(F^\ast,X)}$ for clarity. 

By Corollary \ref{equi-class}, it suffices to use any representative $(F_1^*, X_1) \in I^d_r(G)/\sim$ with $\CO(F_1^*, X_1)=\CO$ instead of all the distinguished cosets associated to $\CO$. 

By Hypotheses (DB), there are only finitely many unipotent conjugacy classes. Hence by induction, $\mathbbm{1}_{(F^*, X)}$ is spanned by $\mathbbm{1}_{(F^{\prime\ast}, X')}$, where $(F^{\prime\ast}, X')$ runs over representatives in $I^d_r(G)/\sim$ with $\CO(F^{\prime\ast}, X') \ge \CO$. 
In particular, $\overline \CH_{[1]}^{G,\flat}$ is spanned by $\mathbbm{1}_{(F^{\prime\ast}, X')}$, where $(F^{\prime\ast}, X')$ runs over representatives in $I^d_r(G)/\sim$.

\smallskip

Now we choose a set of representatives $(F^{\prime\ast}, X')$ of $I^d_r(G)/\sim$. It remains to show that the elements $\mathbbm{1}_{(F^{\prime\ast}, X')}$ are linearly independent over $\mathbb Z[\frac{1}{p}]$. 
Suppose that $\sum a_{(F^{\prime\ast}, X')} \mathbbm{1}_{(F^{\prime\ast}, X')} \equiv 0$ in $\overline \CH$ with $a_{(F^{\prime\ast}, X')} \in \mathbb Z[\frac{1}{p}] \subset \mathbb C$. We regard $\sum a_{(F^{\prime\ast}, X')} \mathbbm{1}_{(F^{\prime\ast}, X')}$ as the zero element in $\overline \CH_{\mathbb C}$. Suppose that not all the coefficients $a_{(F^{\prime\ast}, X')}$ are $0$. Let $\CO$ be a minimal unipotent conjugacy class such that $\CO=\CO(F_1^{\prime\ast}, X'_1)$ for some $(F_1^{\prime\ast}, X'_1)$ with $a_{(F_1^{\prime\ast}, X'_1)} \neq 0$. By the minimality assumption, for any other representative $(F^{\prime\ast}, X')$ in our chosen set, we have $a_{(F^{\prime\ast}, X')}=0$ or $\CO \cap X'=\emptyset$. For $u \in \CO$, we have
\begin{align*} 
0 &=O_u (a_{(F_1^{\prime\ast}, X'_1)} \mathbbm{1}_{(F_1^{\prime\ast}, X'_1)})+ 
\sum_{(F^{\prime\ast}, X') \neq (F_1^{\prime\ast}, X'_1)} O_u(a_{(F^{\prime\ast}, X')} \mathbbm{1}_{(F^{\prime\ast}, X')}) \\ &=a_{(F_1^{\prime\ast}, X'_1)} O_u(\mathbbm{1}_{(F_1^{\prime\ast}, X'_1)}). 
\end{align*}

This is a contradiction as $a_{(F_1^{\prime\ast}, X'_1)} \neq 0$ and $O_u (\mathbbm{1}_{(F_1^{\prime\ast}, X'_1)})$ is a nonzero number in $\mathbb C$. Hence the image of $\mathbbm{1}_{(F^{\prime\ast}, X')}$ in $\overline \CH$ for any set of representatives $(F^{\prime\ast}, X')$ of $I^d_r(G)/\sim$ are linearly independent. 
\qed

\subsection{Unipotent orbits in the group $H^{\gamma}$} Now we consider $\overline \CH_{[\g]}^{G,\flat}$ for arbitrary $[\gamma] \in \CaS_r$. Set $\bH=\bH^{\gamma}$. Note that $\bH$ is not connected in general. The cosets we consider in this situation are the distinguished cosets $I^d_r(H):=I^d_r(H^\circ)$, but there are extra equivalence relations that we need to take into account. The equivalence relation $\tilde \sim$ on $I^d_r(H)$ is generated by the equivalence relation $\sim$ on $I^d(H^\circ)$ in \cite[Definition 3.6.2]{De1} and the relation $(F^*, X) \tilde \sim ({}^h F^*, {}^h X)$ for $h \in H$. In other words, the group $H/H^\circ$ acts naturally on $I^d_r(H^\circ)/\sim$ and the quotient set is $I^d_r(H)/\tilde \sim$. 

On the other hand, let $Cl^u(H)$ be the set of unipotent conjugacy classes of $H$ and $Cl^u(H^\circ)$ be the set of unipotent conjugacy classes of $H^\circ$. Under Hypothesis \ref{hyp: unip}, the natural map $Cl^u(H^\circ) \to Cl^u(H)$ is surjective. The group $H/H^\circ$ acts naturally on $Cl^u(H^\circ)$ and the quotient set is $Cl^u(H)$.

It is easy to see that the map $I^d_r(H^\circ)/\sim \to Cl^u(H^\circ)$ given by $(F^*, X) \mapsto \CO(F^*, X)$ is $H/H^\circ$-equivariant. Thus it leads to a map $I^d_r(H)/\tilde \sim \to Cl^u(H)$. Combining this with the result in \cite[\S4.4]{De2}, under Hypotheses (DB) and Hypothesis \ref{hyp: unip}, this map is bijective. 

Now we come to the main result of this section. 

\begin{theorem}\label{JD-4}
Suppose Hypotheses (DB), Hypothesis \ref{hyp: unip} and Hypothesis \ref{conver} hold. Then for any $[\gamma] \in \CaS_r$,

(1) $\overline \CH^{H^\gamma, \flat}_{r,r^+}$ is a free $\BZ[\frac{1}{p}]$-module with basis $\mathbbm{1}_{(F^*, X)}$, where $(F^*, X)$ runs over representatives in $I^d_r(H^\gamma)/\tilde \sim$. 

(2) The map $\overline i_{\g, r^+}: \overline \CH^{H^\gamma, \flat}_{r,r^+} \to \overline \CH_{[\g]}^{G,\flat}$ defined in Theorem \ref{thm: bar i} is a $\bbZ[\frac1p]$-linear isomorphism. 
\end{theorem}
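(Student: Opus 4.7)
The plan is to handle (1) as an adaptation of Proposition~\ref{JD-4'} to the possibly disconnected group $H^\gamma$, and then deduce (2) from (1) by pairing with orbital integrals on $G$ that ``descend'' to unipotent orbital integrals on $H^\gamma$.

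For part (1), I will first note that any Moy--Prasad coset contributing to $\overline\CH^{H^\gamma,\flat}_{r,r^+}$ lies in $H^{\gamma,\circ}$, since $H^\gamma_{y,r^+}\subset H^{\gamma,\circ}$ for all $y\in\Bd(H^\gamma)$, so $\overline\CH^{H^\gamma,\flat}_{r,r^+}$ is spanned by $\chf_{(F^*,X)}$ with $(F^*,X)\in I^u_r(H^{\gamma,\circ})$. The analog of Lemma~\ref{O-O'} for $H^{\gamma,\circ}$, combined with the same inductive argument used in the proof of Proposition~\ref{JD-4'}, then reduces the spanning set to representatives of $I^d_r(H^{\gamma,\circ})/\!\sim$. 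The only extra ingredient is the observation that for $h\in H^\gamma$ and $(F^*,X)\in I^d_r(H^{\gamma,\circ})$, one has $\chf_X\equiv\chf_{{}^h\!X}\pmod{[\CH(H^\gamma),\CH(H^\gamma)]}$, which collapses the action of $H^\gamma/H^{\gamma,\circ}$ and yields a spanning set indexed by $I^d_r(H^\gamma)/\tilde\sim$. For linear independence, Hypothesis~\ref{hyp: unip} makes $I^d_r(H^\gamma)/\tilde\sim\to Cl^u(H^\gamma)$, $(F^*,X)\mapsto \CO(F^*,X)$, a bijection, and I can then rerun verbatim the minimal-orbit argument at the end of the proof of Proposition~\ref{JD-4'}: choose a minimal $\CO_0$ occurring with a nonzero coefficient in a putative vanishing relation, evaluate $O^{H^\gamma}_u$ for $u\in\CO_0$, and observe that only the term indexed by $\CO_0$ survives.

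For part (2), surjectivity is Corollary~\ref{cor: surj i}. Injectivity reduces, via the basis from (1), to the following descent identity: for each $y\in\Bd(H^\gamma)$, $h\in H^\gamma_{y,r}\cap H^\gamma_{r^+}$ and unipotent $u\in H^\gamma$,
\[
O_{\gamma_{\le r}u}^{G}\bigl(\chf_{\gamma_{\le r}hG_{y,r^+}}\bigr)\;=\;c(\gamma,u)\cdot O_u^{H^\gamma}\bigl(\chf_{hH^\gamma_{y,r^+}}\bigr),
\]
with $c(\gamma,u)$ a nonzero constant depending only on measure normalizations. Granting this, given a relation $\sum_i a_i\,\overline i_{\gamma,r^+}(\chf_{(F_i^*,X_i)})\equiv 0$ in $\overline\CH$, I pair against $O^G_{\gamma_{\le r}u}$ for representatives $u$ of each class in $Cl^u(H^\gamma)$ (these orbital integrals converge by Hypothesis~\ref{conver}); the identity converts the relation into a vanishing linear combination of $O_u^{H^\gamma}(\chf_{(F_i^*,X_i)})$, which by the linear independence in part~(1) forces every $a_i=0$.

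The main obstacle will be the descent identity. I intend to unfold
\[
O_{\gamma_{\le r}u}^{G}\bigl(\chf_{\gamma_{\le r}hG_{y,r^+}}\bigr)=\int_{G/C_G(\gamma_{\le r}u)}\chf_{\gamma_{\le r}hG_{y,r^+}}\bigl(g\,\gamma_{\le r}u\,g^{-1}\bigr)\,d\dot g
\]
and show that the support condition $g\,\gamma_{\le r}u\,g^{-1}\in\gamma_{\le r}hG_{y,r^+}$ forces $g\in H^\gamma\cdot G_{y,r^+}$ (modulo $C_G(\gamma_{\le r}u)$), after which the integral factors as an $H^\gamma/C_{H^\gamma}(u)$-integral times a $G_{y,r^+}$-volume factor absorbed into $c(\gamma,u)$. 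The key rigidity input is the good-product formalism: by Lemma~\ref{lem: for dec} and Lemma~\ref{lem: good-element-1}, every element of $\gamma_{\le r}G_{y,r^+}$ has $\gamma_{\le r}$ as its ``truncation'' up to an element of depth $>r$, so any $g\in G$ that conjugates the semisimple part of $\gamma_{\le r}u$ into the semisimple part of another element of $\gamma_{\le r}hG_{y,r^+}$ must centralize $\gamma_{\le r}$ modulo $G_{y,r^+}$, i.e.\ lie in $H^\gamma\cdot G_{y,r^+}$. This is morally the group-theoretic analog of the descents in \S\ref{sec: descents} and should be the most delicate step.
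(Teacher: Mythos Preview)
Your outline is essentially correct, and your treatment of the spanning set in (1) matches the paper's. The difference lies in how linear independence in (1) and injectivity in (2) are obtained.

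You prove (1) first by pairing against $O_u^{H^\gamma}$, and then reduce (2) to a semisimple descent identity
\[
O^{G}_{\gamma_{\le r}u}\bigl(\chf_{\gamma_{\le r}hG_{y,r^+}}\bigr)=c(\gamma,u)\cdot O_u^{H^\gamma}\bigl(\chf_{hH^\gamma_{y,r^+}}\bigr).
\]
This is plausible and your sketch (forcing $g\in G_{y,\epsilon}\cdot H^\gamma$ via Lemma~\ref{lem: good element centralizer} and Jordan decomposition) can be completed, but establishing that $c(\gamma,u)$ is genuinely independent of $y$ and $h$ takes some work, and strictly speaking Hypothesis~\ref{conver} is stated only for $G$, not for $H^\gamma$, so your separate use of $O_u^{H^\gamma}$ in (1) needs a word of justification.

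The paper bypasses the descent identity entirely and proves the linear independence in (1) and the injectivity in (2) \emph{simultaneously}. Assuming $\overline i_{\gamma,r^+}\bigl(\sum a_{(F^*,X)}\chf_{(F^*,X)}\bigr)\equiv 0$ in $\overline\CH^G$, one chooses a minimal unipotent class $\CO$ of $H^\gamma$ with a nonzero coefficient, picks $u\in\CO$, and applies $O^G_{\gamma u}$ directly. Only two qualitative facts are used: (i) the support of $\overline i_{\gamma,r^+}(\chf_{(F^*,X)})$ lies in ${}^G(\gamma X)$ (immediate from iterated Lemma~\ref{lem: approximation}), and (ii) by Lemma~\ref{lem: good element centralizer}, $\gamma u\in{}^G(\gamma X')$ forces $u\in{}^{H^\gamma}X'$, so all terms with $(F'^*,X')\neq(F^*,X)$ vanish by minimality of $\CO$. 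The surviving term $a_{(F^*,X)}\,O^G_{\gamma u}\bigl(\overline i_{\gamma,r^+}(\chf_{(F^*,X)})\bigr)$ is nonzero simply because $\gamma u$ lies in the support and the integrand is nonnegative. No precise descent constant is needed, and only $G$-orbital integrals (covered by Hypothesis~\ref{conver}) are invoked. Your approach buys a cleaner conceptual separation of (1) and (2); the paper's approach is shorter and avoids the delicate descent formula altogether.
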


\begin{proof} Without loss of generality, we assume $\gamma=\gamma_{\le r}$.

By definition, if $h \in H^\gamma$, then for any $(F^*, X) \in I^d_r(H^\gamma)$, we have $\mathbbm{1}_{(F^*, X)} \equiv \mathbbm{1}_{({}^h F^*, {}^h X)} \mod [\CH^{H^\gamma}, \CH^{H^\gamma}]$. We choose a set of representatives $(F^*, X)$ in  in $I^d_r(H^\gamma)/\tilde \sim$. Similar to the argument of Proposition \ref{JD-4'}, $\overline \CH^{H^\gamma}_{r,r^+}$ is spanned by $\mathbbm{1}_{(F^*, X)}$. By Corollary \ref{cor: surj i}, $\bar i_{\g, r^+} (\overline \CH^{H^\gamma, \, \flat}_{r, r^+})=\overline\CaH_{[\gamma]}^{G,\flat}$.

Now suppose that $\overline i_{\g, r^+} (\sum a_{(F^*, X)} \mathbbm{1}_{(F^*, X)}) \equiv 0$ in $\overline \CH$, here $a_{(F^*, X)} \in \mathbb Z[\frac{1}{p}] \subset \mathbb C$. We regard $\overline i_{\g, r^+} (\sum a_{(F^*, X)} \mathbbm{1}_{(F^*, X)})$ as the zero element in $\overline \CH_{\mathbb C}$. Suppose that not all the coefficients $a_{(F^*, X)}$ are $0$. Let $\CO$ be a minimal unipotent conjugacy of $H^\gamma $ such that there exists $(F^*, X)$ in the chosen set of representatives with $\CO(F^*, X)=\CO$ and $a_{(F^*, X)} \neq 0$. 

By Lemma \ref{lem: good element centralizer}, the map $g \mapsto \g g$ induces an injective map from the set of conjugacy classes of $H^{\gamma}$ to the set of conjugacy classes of $G$. Let $\CO'$ be the conjugacy class of $G$ that contains $\g \CO$. Note that the support of $\overline i_{\g, r^+}(\mathbbm{1}_{(F^\ast,X)})$ is contained in ${}^G X$. Let $u \in \CO$. For any element $(F^{\prime\ast}, X')$ in our chosen set, if $(F^{\prime\ast}, X') \neq (F^*, X)$, then $O_{\gamma u} (\overline i_{\g, r^+} (a_{(F^{\prime\ast}, X')} \mathbbm{1}_{(F^{\prime\ast}, X')}) )=0$. Then \begin{align*} 0 &=O_{\gamma u}(\overline i_{\g, r^+}(a_{(F^*, X)} \mathbbm{1}_{(F^*, X)}))+ \sum_{(F^{\prime\ast}, X') \neq (F^*, X)} O_{\gamma u}(\overline i_{\g, r^+}(a_{(F^{\prime\ast}, X')} \mathbbm{1}_{(F^{\prime\ast}, X')})) \\ &=a_{(F^*, X)} O_{\gamma u}(\overline i_{\g, r^+}(\mathbbm{1}_{(F^*, X)})). \end{align*}

This is a contradiction as $a_{(F^*, X)} \neq 0$ and $O_{\gamma u}(\overline i_{\g, r^+}(\mathbbm{1}_{(F^*, X)}))$ is a nonzero number in $\mathbb C$. Therefore the set $\mathbbm{1}_{(F^*, X)}$ is linearly independent and the map $\overline i_{\g, r^+}$ is injective. 
\end{proof}

\begin{corollary}
Suppose Hypotheses in \S\ref{subsec: hypos} hold. Then $\overline \CH^\rig_{r^+}$ is a free $\BZ[\frac{1}{p}]$-module. If moreover $G$ is semisimple, then the rank of $\overline \CH^\rig_{r^+}$ is $\sum_{[\gamma] \in \CaS_r} \sharp Cl^u(H^{\gamma})$.
\end{corollary}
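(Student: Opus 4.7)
The plan is to combine the decomposition in Theorem \ref{JD-H} with the descent isomorphism and explicit basis from Theorem \ref{JD-4}; the corollary will then drop out as a matter of bookkeeping.

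First I would invoke Theorem \ref{JD-H} to write
$$\overline\CaH^\rig_{r^+} \;=\; \bigoplus_{[\gamma]\in\CaS_r} \overline\CaH^{G,\flat}_{[\gamma]},$$
and then apply Theorem \ref{JD-4}(2) to identify each summand, through the descent map $\overline i_{\gamma,r^+}$, with $\overline\CaH^{H^\gamma,\flat}_{r,r^+}$. Next I would appeal to Theorem \ref{JD-4}(1), which provides a $\BZ[\tfrac{1}{p}]$-basis of $\overline\CaH^{H^\gamma,\flat}_{r,r^+}$ consisting of the classes $\mathbbm{1}_{(F^*,X)}$ indexed by a set of representatives of $I^d_r(H^\gamma)/\tilde\sim$. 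Since any direct sum of free $\BZ[\tfrac{1}{p}]$-modules is again free, transporting these bases across $\overline i_{\gamma,r^+}$ and taking the union over $[\gamma]\in\CaS_r$ will yield a $\BZ[\tfrac{1}{p}]$-basis of $\overline\CaH^\rig_{r^+}$. This settles the freeness claim.

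For the rank, I would use the bijection
$$I^d_r(H^\gamma)/\tilde\sim \;\xrightarrow{\;\sim\;}\; Cl^u(H^\gamma),\qquad (F^*,X)\mapsto \CO(F^*,X),$$
recalled in \S\ref{sec: JD} just before Theorem \ref{JD-4}, which is in turn assembled from DeBacker's parameterization in the connected case together with the $H^\gamma/(H^\gamma)^\circ$-averaging that relies on Hypothesis \ref{hyp: unip}. Counting bases summand by summand will then give
$$\operatorname{rank}_{\BZ[1/p]}\overline\CaH^\rig_{r^+} \;=\; \sum_{[\gamma]\in\CaS_r} \sharp Cl^u(H^\gamma),$$
which is the advertised formula in the semisimple case.

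There is no substantial obstacle here: all of the real work---the stratification of $G^\rig$ into good-product strata, the descent isomorphism $\overline i_{\gamma,r^+}$, and the parameterization of unipotent orbits of the possibly disconnected group $H^\gamma$ by distinguished cosets---has already been carried out in the preceding sections, and the corollary only requires assembling these pieces.
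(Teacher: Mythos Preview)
Your proposal is correct and matches the paper's implicit approach: the corollary is stated without proof precisely because it follows immediately from combining Theorem \ref{JD-H}, Theorem \ref{JD-4}, and the bijection $I^d_r(H^\gamma)/\tilde\sim \cong Cl^u(H^\gamma)$ established just before Theorem \ref{JD-4}. The one point you leave implicit is why semisimplicity is singled out for the rank statement: when $G$ is semisimple the center is finite, so $\CaS_r$ is finite and the sum $\sum_{[\gamma]}\sharp Cl^u(H^\gamma)$ is a finite number, whereas for general $G$ the index set $\CaS_r$ is infinite (already $Z/Z_{r^+}$ is infinite) and one only gets freeness without a finite rank.
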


\subsection{Application to invariant distributions} For any compact subset $X$ of $G$, we denote by $J(X)$ the space of complex-valued invariant distributions of $G$ with support in ${}^G X$. Similarly, we write $\overline\CH(X)$, $\overline\CH(X)_{r^+,\mathbb C}$ for $\overline\CH({}^G\!X)$, $\overline\CH({}^G\!X)_{r^+,\mathbb C}$ etc. for simplicity. Now we discuss some application to the invariant distributions. We first recall Theorem \ref{thmB} in the introduction and give a proof of it. 

\begin{theorem}\label{thmB'}
Suppose Hypotheses in \S\ref{subsec: hypos} hold. The restriction $J(G^\rig) \mid_{\CH_{r^+, \mathbb C}}$ has a basis given by the restriction of orbital integrals $O_{\g_{\le r} u}$ to ${\CH_{r^+, \mathbb C}}$, where $[\g] \in \CS_r$, and $u$ runs over the representatives of the unipotent conjugacy classes of $H^{\gamma}$. 
\end{theorem}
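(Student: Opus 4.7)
The plan is to identify $J(G^\rig)|_{\CH_{r^+, \mathbb C}}$ with (a subspace of) the dual of $\overline\CH^\rig_{r^+, \mathbb C}$ and match the orbital integrals $O_{\gamma_{\le r} u}$ with a dual basis of the explicit basis provided by Theorems \ref{JD-H} and \ref{JD-4}. Any $D \in J(G^\rig)$ annihilates $[\CH_{\mathbb C}, \CH_{\mathbb C}]$ and vanishes on functions supported off $G^\rig$, so $D|_{\CH_{r^+, \mathbb C}}$ factors through $\overline\CH^\rig_{r^+, \mathbb C}$. By Theorems \ref{JD-H} and \ref{JD-4}, the latter has the $\mathbb C$-basis
\[
\bigl\{\bar i_{\gamma, r^+}\bigl(\mathbbm{1}_{(F^*_u, X_u)}\bigr)\bigr\}_{[\gamma] \in \CaS_r,\, [u] \in Cl^u(H^\gamma)},
\]
where for each pair $([\gamma], [u])$ I fix a distinguished representative $(F^*_u, X_u) \in I^d_r(H^\gamma)/\tilde\sim$ attached to $[u]$ as in \S 4.2. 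Since $\gamma_{\le r} u \in G^\rig$, Hypothesis \ref{conver} ensures that $O_{\gamma_{\le r} u}$ defines an element of $J(G^\rig)$.

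The heart of the argument is to compute the pairing matrix
\[
M_{([\gamma'], [u']),\, ([\gamma], [u])} := O_{\gamma'_{\le r} u'}\bigl(\bar i_{\gamma, r^+}(\mathbbm{1}_{(F^*_u, X_u)})\bigr)
\]
and show that it is block-diagonal in the $[\gamma]$-index, with each (finite) diagonal block upper-triangular and invertible with respect to any linear refinement of the unipotent orbit-closure order on $Cl^u(H^\gamma)$. Block-diagonality follows from the support analysis: by construction (Theorem \ref{thm: bar i}) the support of $\bar i_{\gamma, r^+}(\mathbbm{1}_{(F^*_u, X_u)})$ lies in $^G(\gamma_{\le r} H^\gamma_{r^+})$, while $\gamma'_{\le r} u' \in {}^G(\gamma'_{\le r} H^{\gamma'}_{r^+})$ by Corollary \ref{cor: dec Gc}, and these two $G$-domains are disjoint for $[\gamma] \neq [\gamma']$ by Proposition \ref{prop: dec Gc}(1). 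Within a block, the triangularity with nonzero diagonal is a direct consequence of the argument already used to prove injectivity of $\bar i_{\gamma, r^+}$ in Theorem \ref{JD-4}: the entry vanishes unless the $H^\gamma$-orbit of $u'$ meets $\overline{X_u}$, forcing $[u'] \ge [u]$, and it is a nonzero scalar when $[u']=[u]$ by non-degeneracy of orbital integrals on distinguished cosets, as in Proposition \ref{JD-4'}.

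Block-diagonality together with invertibility of the finite diagonal blocks yields linear independence of $\{O_{\gamma_{\le r} u}|_{\CH_{r^+, \mathbb C}}\}$. For the dual-basis (spanning) part, observe that for any $f \in \CH_{r^+, \mathbb C}$ the support $\supp(f)$ meets only finitely many of the open $G$-domains $^G(\gamma_{\le r} H^\gamma_{r^+})$ (Proposition \ref{prop: dec Gc}(4)), so any functional on $\overline\CH^\rig_{r^+, \mathbb C}$ coming from $J(G^\rig)$ is, when paired with $f$, determined by finitely many $[\gamma]$-blocks; within each such block, invertibility of $M$ lets us solve uniquely for coefficients $a_{[\gamma], [u]} \in \mathbb C$ so that the distribution $\sum a_{[\gamma],[u]} O_{\gamma_{\le r} u}$ matches $D$ on every basis vector. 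The main obstacle will be the within-block triangularity, which requires carefully tracking the support of $\bar i_{\gamma, r^+}(\mathbbm{1}_{(F^*_u, X_u)})$ through the inductive construction in Theorem \ref{thm: bar i} and adapting DeBacker's support analysis in \cite[\S 2]{De2} from connected groups to the (possibly disconnected) centralizer $H^\gamma$ via Hypothesis \ref{hyp: unip} and the discussion in \S 4.2.
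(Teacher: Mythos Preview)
Your proposal is correct and follows essentially the same route as the paper: factor $J(G^\rig)|_{\CH_{r^+,\mathbb C}}$ through $\overline\CH^\rig_{r^+,\mathbb C}$, use the direct-sum decomposition of Theorem~\ref{JD-H} together with the disjointness in Proposition~\ref{prop: dec Gc} for block-diagonality, and invoke the triangularity argument from the proof of Theorem~\ref{JD-4} within each block. The paper is terser---it records directly that $J(G^\rig)|_{\CH_{r^+,\mathbb C}}=\bigoplus_{[\gamma]}\overline\CH_{r^+,\mathbb C}(\gamma H^\gamma_{r^+})^*$ and cites the proof of Theorem~\ref{JD-4} for the fact that the $O_{\gamma_{\le r}u}$ span each block's dual---whereas you spell out the pairing matrix; note also that your ``main obstacle'' (within-block triangularity for disconnected $H^\gamma$) is not new work here but is exactly what was already established in the proof of Theorem~\ref{JD-4}(2).
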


\proof 
By Theorem \ref{JD-H}, $\overline \CH^\rig_{r^+, \mathbb C}=\oplus_{[\gamma] \in \CaS_r} \overline \CH_{r^+, \mathbb C}(\gamma H^\gamma_{r^+})$ and each subset ${}^G\!(\gamma H^\gamma_{r^+})$ is a $G$-domain. We have \begin{align*} J(G^\rig) \mid_{\CH_{r^+, \mathbb C}} &=J(G^\rig) \mid_{\overline \CH_{r^+, \mathbb C}}=J(G^\rig) \mid_{\overline \CH^\rig_{r^+, \mathbb C}}=\oplus_{[\gamma] \in \CaS_r} J(G^\rig) \mid_{\overline \CH_{r^+, \mathbb C}(\gamma H^\gamma_{r^+})} \\ &=\oplus_{[\gamma] \in \CaS_r} \overline \CH_{r^+, \mathbb C}(\gamma H^\gamma_{r^+})^*.\end{align*} 

For any $[\gamma], [\gamma'] \in \CaS_r$ with $[\gamma'] \neq [\gamma]$, by Proposition \ref{prop: dec Gc} we have ${}^G\!(\gamma H^{\gamma}_{r^+}) \cap {}^G\!(\gamma' H^{\gamma'}_{r^+})=\emptyset$ and hence for any unipotent element $u \in H^\gamma$, $O_{\gamma_{\le r} u} (\overline \CH_{r^+, \mathbb C}(\gamma' H^{\gamma'}_{r^+}))=0$. 

Now we fix an equivalence class $[\gamma] \in \CaS_r$ and an element $\gamma_{\le r}$. By the proof of Theorem \ref{JD-4} (2), the dimension of $\overline \CH_{r^+, \mathbb C}(\gamma H^\gamma_{r^+})$ equals to the number of unipotent conjugacy classes of $H^{\gamma}$ and the orbital integrals $O_{\gamma_{\le r} u}$, where $u$ runs over representatives of the unipotent conjugacy classes of $H^{\gamma}$, form a basis of linear functions on $\overline \CH_{r^+, \mathbb C}(\gamma H^\gamma_{r^+})$. The theorem is proved.
\qed

\subsection{}\label{howe-diss} Finally, we explain how Theorem \ref{thmB'} may be applied to Howe's conjecture. 
In \cite{Howe}, Howe conjectured that for any open compact subgroup $K$ and compact subset $X$ of $G$, the restriction $J(X) \mid_{\CH_{\mathbb C}(G, K)}$ is finite dimensional. This is proved by Clozel \cite{Cl} and by Barbasch and Moy \cite{BM}. Another proof is given by the first-named author in \cite{hecke-1}. 

Following \cite{hecke-1}, we have the Newton decompositions 
$$
G=\sqcup_{\nu \in \aleph} G(\nu) \quad \text{ and } \quad G^\rig=\sqcup_{\nu \in \aleph;\\ \,C_G(\nu)=G }\  G(\nu),
$$ 
where $\aleph$ is the product of $\pi_1(G)$ (the Kottwitz factor) and the set of dominant rational coweights of $G$ (the Newton factor), and $G(\nu)$ is the corresponding Newton stratum defined in \cite[\S 2.2]{hecke-1}.

It follows from the definitions of Newton strata and $r^+$-equivalence that for semisimple compact-modulo-center elements, if $\g$ and $\g'$ are $r^+$-equivalent mod center (for some $r$), then $\g$ and $\g'$ are contained in the same Newton stratum. For $\nu \in \aleph$ that is central in $G$, we let $\CS_{\nu, r}$ be the set of $r^+$-equivalence classes of semisimple elements in $G(\nu)$. Then we have 
$$
\CS_r=\sqcup_{\nu \in \aleph;\, \,C_G(\nu)=G} \ \CS_{\nu, r}.
$$ 

Based on the approach of \cite{hecke-1}, the study of the restriction $J(X) \mid_{\CH_{\mathbb C}(G, K)}$ can be reduced to the study of $J(G(\nu)) \mid_{\CH_{\mathbb C}(G, I_n)}$, where $\nu \in \aleph$ that is central in $G$ and $I_n$ is the $n$-th congruent subgroup of an Iwahori subgroup of $G$.  If $r=n-\e$, where $\e$ is a sufficiently small positive number, then there is an $x$ in the base alcove such that $G_{x, r^+}=I_n$ and $\overline \CH_{r^+}=\overline \CH(G, I_n)$. In this case, $J(G^\rig) \mid_{\CH_{r^+, \mathbb C}}=J(G^\rig) \mid_{\CH_{\mathbb C}(G, I_n)}$. 

Let $\CH(G, I_n; \nu)$ be the $\mathbb Z[\frac{1}{p}]$-submodule of $\CH(G, I_n)$ consisting of functions with support in $G(\nu)$ and $\overline \CH(G, I_n; \nu)$ be the image of $\CH(G, I_n; \nu)$ in the cocenter $\overline \CH$. The main result of \cite{hecke-1} establishes the Newton decomposition (see \cite[Theorem 4.1]{hecke-1}): 
$$
\overline \CH(G, I_n)=\sqcup_{\nu \in \aleph}\  \overline \CH(G, I_n; \nu) \quad \text{ and } 
\quad \overline \CH(G, I_n)^\rig=\sqcup_{\nu \in \aleph; \,C_G(\nu)=G} \ \overline \CH(G, I_n; \nu).$$

Combining it with Theorem \ref{thmB'}, we have

\begin{theorem}
Suppose Hypotheses in \S\ref{subsec: hypos} hold. Let $\nu \in \aleph$ such that $\nu$ is central in $G$. The restriction $J(G(\nu)) \mid_{\CH_{\mathbb C}(G, I_n)}$ has a basis given by the restriction of orbital integrals $O_{\g_{\le r} u}$ to ${\CH_{\mathbb C}(G, I_n)}$, where $[\g] \in \CS_{\nu, r}$, and $u$ runs over the representatives of the unipotent conjugacy classes of $H^{\gamma}$. 

In particular, the dimension of  $J(G(\nu)) \mid_{\CH_{\mathbb C}(G, I_n)}$ is equal to $\sum_{[\gamma] \in \CaS_{\nu, r}} \sharp Cl^u(H^{\gamma})$.
\end{theorem}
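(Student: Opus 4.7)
The plan is to reduce the statement to Theorem~\ref{thmB'} by choosing the depth $r$ correctly and then intersecting with a single Newton stratum. Concretely, I would pick $r = n-\epsilon$ for a sufficiently small $\epsilon>0$, and choose a point $x$ in the base alcove so that $G_{x,r^+}=I_n$; then $\CaH_{r^+,\mathbb C}\supseteq \CaH_{\mathbb C}(G,I_n)$ generates the same $I_n$-biinvariant part of $\overline\CaH_{\mathbb C}$, and in particular $J(G^{\rig})|_{\CaH_{\mathbb C}(G,I_n)}$ is the restriction to $\CaH_{\mathbb C}(G,I_n)$ of $J(G^{\rig})|_{\CaH_{r^+,\mathbb C}}$.

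Next I would exploit the fact that for a central $\nu\in\aleph$ the stratum $G(\nu)$ is a $G$-domain contained in $G^{\rig}$, together with the Newton decomposition from \cite{hecke-1} recalled in \S\ref{howe-diss}:
\[
\overline\CaH(G,I_n)^{\rig}=\bigoplus_{\mu\in\aleph,\ C_G(\mu)=G}\overline\CaH(G,I_n;\mu).
\]
The key compatibility I need is that the rigid Jordan decomposition of Theorem~\ref{JD-H} refines the Newton decomposition: for any $[\gamma]\in\CaS_r$, every element of the $G$-domain ${}^G(\gamma H^{\gamma}_{r^+})$ shares the Newton point of $\gamma$. This follows from the definition of the Newton map on semisimple elements (it is constant on $r^+$-equivalence classes modulo center, see \S\ref{howe-diss}) and from the fact that adding a topologically unipotent factor in $H^\gamma_{r^+}$ leaves the Newton point unchanged. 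Hence
\[
\overline\CaH(G,I_n;\nu)\cap\overline\CaH^{\rig}_{r^+}=\bigoplus_{[\gamma]\in\CaS_{\nu,r}}\overline\CaH_{r^+}\bigl({}^{G}(\gamma H^{\gamma}_{r^+})\bigr).
\]

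With this in place the theorem is immediate: dualizing and using Hypothesis~\ref{conver} to make sense of the orbital integrals $O_{\gamma_{\le r}u}$, Theorem~\ref{thmB'} applied summand-by-summand yields a basis of $J(G(\nu))|_{\CaH_{\mathbb C}(G,I_n)}$ consisting of the restrictions of $O_{\gamma_{\le r}u}$ with $[\gamma]\in\CaS_{\nu,r}$ and $u$ running over unipotent conjugacy class representatives of $H^{\gamma}$. The dimension count then follows from Theorem~\ref{JD-4}(1), which identifies the rank of the corresponding block with $\sharp\,Cl^u(H^{\gamma})$, and summing over $[\gamma]\in\CaS_{\nu,r}$.

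The main obstacle I anticipate is verifying the compatibility of the rigid Jordan decomposition with the Newton decomposition, i.e.\ that the Newton point is constant on each domain ${}^G(\gamma H^{\gamma}_{r^+})$ with $[\gamma]\in\CaS_r$; once this is established the rest is bookkeeping. If needed, this can be checked by writing an element $g=\gamma_{\le r}\cdot h$ with $h\in H^{\gamma}_{r^+}$, using that $h$ is topologically unipotent so contributes trivially to the Newton map, and invoking $G$-invariance of the Newton stratification.
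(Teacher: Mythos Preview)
Your approach is correct and essentially matches the paper's, which offers no detailed proof but simply says ``Combining [the Newton decomposition] with Theorem~\ref{thmB'}, we have'' the result. You have correctly identified the one substantive point---that the Jordan decomposition of $G^{\rig}$ into the domains ${}^G(\gamma H^\gamma_{r^+})$ refines the Newton decomposition---and this is exactly what the paper asserts (without further justification) in the paragraph preceding the theorem; the only minor simplification available to you is that the paper claims the stronger identity $\overline\CaH_{r^+}=\overline\CaH(G,I_n)$ for $r=n-\epsilon$, which makes the passage between $\CaH_{r^+,\mathbb C}$ and $\CaH_{\mathbb C}(G,I_n)$ cleaner than your phrasing suggests.
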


This result gives an explicit basis of the finite dimensional space $J(G(\nu)) \mid_{\CH_{\mathbb C}(G, I_n)}$, and thus gives a precise estimate on the finiteness of the restriction of invariant distributions predicted by Howe in \cite{Howe}.

\section{Examples}\label{sec: ex}

In this section, we give some examples to illustrate relations between the cocenter and the representations. We will work with the Hecke algebras of complex-valued functions and complex representations. 

\subsection{Cocenter and representations} Before we come to some concrete examples, we would like to give a brief discussion on the relation between the cocenters and the representations. 

Recall that $\mathfrak R_{\mathbb C}(G)$ is the complexified Grothendieck group of smooth admissible complex representations of $G$ of finite length. Let $\mathcal P$ be the set of all proper parabolic subgroups of $G$. For any Levi subgroup $M$ of $G$, we denote by $\Psi(M)_{\mathbb C}$ the group of unramified character of $M$ over $\mathbb C$. We define the {\it elliptic quotient} and the {\it rigid quotient} as follows:
\begin{gather*}
\mathfrak R_{\mathbb C}(G)_{ell}=\mathfrak R_{\mathbb C}(G)/\langle \textrm{Ind}_P^G(\sigma))\mid P=MN \in \mathcal P, \sigma \in \mathfrak R_{\mathbb C}(M) \rangle;\\
\mathfrak R_{\mathbb C}(G)_{\rig}=\mathfrak R_{\mathbb C}(G)/\langle \textrm{Ind}_P^G(\sigma)-\textrm{Ind}_P^G(\sigma\otimes\chi)\mid P=MN \in \mathcal P, \sigma \in \mathfrak R_{\mathbb C}(M), \chi \in \Psi(M)_{\mathbb C} \rangle.
\end{gather*}

We have discussed the rigid cocenter $\overline \CH^\rig_{\mathbb C}$ in this paper. There is another important subspace of cocenter, the elliptic cocenter $\overline \CH^{ell}_{\mathbb C}$, introduced by Bernstein, Deligne and Kazhdan in \cite{BDK}. By definition, $$\overline \CH^{ell}_{\mathbb C}=\{f \in \overline \CH_{\mathbb C}; \bar r_M(f)=0 \text{ for any proper standard Levi } M\},$$ where $\bar r_M: \overline \CH_{\mathbb C}^G \to \overline \CH_{\mathbb C}^M$ is the map adjunct to the parabolic induction functor $\mathfrak R_{\mathbb C}(M) \to \mathfrak R_{\mathbb C}(G)$. 

The trace map $Tr_{\mathbb C}: \overline \CH_{\mathbb C} \to \mathfrak R_{\mathbb C}(G)^*$ induces 
\[\tag{a}
Tr_{\mathbb C}: \overline \CH^{ell}_{\mathbb C} \to \mathfrak R_{\mathbb C}(G)_{ell}^*, \qquad Tr_{\mathbb C}: \overline \CH^{\rig}_{\mathbb C} \to \mathfrak R_{\mathbb C}(G)_{\rig}^*.
\]
Here the first map is studied in \cite{BDK} and the second map is studied in \cite{hecke-3}. 

Similarly, for any $n \in \BN$, let $\mathfrak R(\CH_{\mathbb C}(G, I_n))$ be the complexified Grothendieck group of finite dimensional representations of $\CH_{\mathbb C}(G, I_n)$. We denote by $\mathfrak R(\CH_{\mathbb C}(G, I_n))_{ell}$ and $\mathfrak R(\CH_{\mathbb C}(G, I_n))_{\rig}$ the elliptic quotient and the rigid quotient of $\mathfrak R(\CH_{\mathbb C}(G, I_n))$ respectively. Then we have 
\[\tag{b} 
Tr_{\mathbb C}: \overline \CH_{\mathbb C}(G, I_n)^{ell} \to \mathfrak R(\CH_{\mathbb C}(G, I_n))_{ell}^*, \quad Tr_{\mathbb C}: \overline \CH_{\mathbb C}(G, I_n)^{\rig} \to \mathfrak R(\CH_{\mathbb C}(G, I_n))_{\rig}^*.
\]

If $G$ is semisimple, then all the vector spaces in (b) are finite dimensional and the maps in (b) are bijective. Here the surjectivity follows from the trace Paley-Wiener theorem \cite{BDK} and \cite{hecke-3} and the injectivity follows from the density theorem \cite{Kaz}.

\subsection{The $PGL_2(F)$ case} In this subsection, we assume that $G=PGL_2(F)$, where $F$ is a locally compact field with finite residue field $\mathbb F_q$. We assume furthermore that $q$ is odd.

Up to conjugation, there is 
\begin{itemize}
\item a unique split maximal torus of $G$, which we denote by $T_s$;
\item a unique maximal torus that split over the unramified extension of $F$, which we denote by $T_u$;
\item two non-conjugate maximal tori that split over ramified extensions, which we denote by $T_{rm}$ and $T_{rm}'$. 
\end{itemize}

Let $n$ be a positive integer and $r=n-\e$, where $\e$ is sufficiently small. For $[H]\in\{G, T_u,T_{rm},T_{rm}^{\prime}\}$ and $[\gamma]\in\CaS_r$, $\sharp_{[H]}$ denotes the cardinality of $\{[\gamma]\in\CaS_r\mid [C_G(\gamma_{\le r})]=[H]\}$. Then, from the table below, we have
$$\dim \overline \CH_{\mathbb C}(G, I_n)^{\rig}=3 q^n+2.$$

\[
\begin{tabular}{|c|c|c|}
\hline
$[H]$& $\sharp_{[H]}$ & $Cl_{H}^u$\\
\hline\hline
$PGL_2$& 1&2 \\
\hline
$T_s$ & $\frac{q^{n-1}(q-1)}{2}$ &1 \\
\hline
$T_u$ & $\frac{q^{n-1}(q+1)}2$ & 1\\
\hline
$T_{rm}$ & $q^n$ & 1\\
\hline
$T_{rm}^\prime$ & $q^n$ & 1\\
\hline
\end{tabular}
\]

\

Note that every $r^+$-equivalence class in $G^\rig$ contains some elliptic semisimple elements. However, in general, not every $r^+$-equivalence class in $G^\rig$ consists only of elliptic semisimple elements. One may show that for $G=PGL_2$, an $r^+$-equivalence class $[\g]$ in $G^\rig$ consists only of elliptic semisimple elements if and only if $H^{\g}$ is a compact subgroup of $G$. We define $\CaS^{ell}_r \subset \CaS_r$ to be the subset of $r^+$-equivalence classes in $G^\rig$ only consisting of elliptic semisimple elements. Then we have the following identity 
$$
\dim \overline \CH_{\mathbb C}(G, I_n)^{ell}=\dim \overline \CH_{\mathbb C}(G, I_n)^\rig-\dim \overline \CH_{\mathbb C}(T_s^c, T_{s, n})=\sharp \CaS^{ell}_r+1=2q^n+\frac{q^{n-1}(q+1)}{2}+1,
$$
where $T_s^c \subset T_s$ is the subgroup consisting of compact elements in $T_s$, $T_{s, n}$ is the $n$-th congruent subgroup of $T_s$ and the number $1$ in the third term comes from the regular unipotent conjugacy class of $G$. 

Note that the discrete series gives a natural basis of $\mathcal R_{\mathbb C}(G)_{ell}$ and the discrete series of depth at most $r$ gives a natural basis $\mathcal R_{\mathbb C}(\CH(G, I_n))_{ell}$. Moreover, the discrete series consist of supercuspidal representations and four non-supercuspidal discrete series representations. By direct calculation, one can check the number of supercuspidal representations of depth at most $r$ equals $\sharp \CaS^{ell}_r-3$, and we have $\dim \overline \CH_{\mathbb C}(G, I_n)^{ell}=\dim \mathfrak R(\CH_{\mathbb C}(G, I_n))_{ell}$. 

\subsection{Quaternion algebra} Let $G=PGL_2(F)$. Let $D$ be a quaternion algebra over $F$ and $G'=D^\times/F^\times$. It is well-known that there is a natural bijection between the set of elliptic semisimple conjugacy classes in $GL_2(F)$ and the regular semisimple conjugacy classes in $D^\times$. Here $\gamma \leftrightarrow \gamma'$ if and only if they have the same characteristic polynomial. Therefore, there is a natural bijection between the set of elliptic semisimple conjugacy classes in $G=PGL_2(F)$ and the regular semisimple conjugacy classes in $G'$ and this bijection preserves the depth. We have that $$\dim \overline \CH_{\mathbb C}(G', I_n^{G'})=\sharp \CaS^{G'}_r=\sharp \CaS^{G, ell}_{r}+1=\dim \overline \CH_{\mathbb C}(G, I_n)^{ell}.$$ Here the number $1$ in the third term comes from the $r^+$ equivalence $[1]$ in $\CaS^{G'}_r$.

The local Jacquet-Langlands correspondence \cite{JL} gives a bijection between the discrete series of $G$ and the irreducible representations of $G'$. The natural duality between the cocenter and representations indicates that there is not only the numerical identity $\dim \overline \CH_{\mathbb C}(G', I_n^{G'})=\dim \overline \CH_{\mathbb C}(G, I_n)^{ell}$, but there also should be a natural bijection between the cocenter $\overline \CH_{\mathbb C}(G', I_n^{G'})$ and the elliptic cocenter $\overline \CH_{\mathbb C}(G, I_n)^{ell}$. It would be interesting to study such natural bijections for the (elliptic) cocenters of $PGL_m$ and its inner forms for arbitrary $m$.

\subsection{The $SL_2$ case} For $G=SL_2$, there are two non-conjugate maximal tori that split over the unramified extensions, which we denote by $T_u$ and $T_u^\prime$. We have the following table for $SL_2$ (with $n=1$)
\[
\begin{tabular}{|c|c|c|}
\hline
$[H]$& $\sharp_{[H]}$ & $Cl_{H}^u$\\
\hline\hline
$SL_2$& 2&5 \\
\hline
$T_s$ & $\frac{q-3}2$ & 1\\
\hline
$T_u$ & $\frac{q-1}2$ & 1\\
\hline
$T_u^\prime$ & $\frac{q-1}2$ & 1\\
\hline
$T_{rm}$ & $q-1$ & 1\\
\hline
$T_{rm}^\prime$ & $q-1$ & 1\\
\hline
\end{tabular}
\]

We have
$$\dim \overline \CH_{\mathbb C}(G, I_n)^{\rig}=3q+6+\frac{q-1}2, \quad \dim \overline \CH_{\mathbb C}(G, I_n)^{ell}=3 q+5.$$

We have seen that for $G=PGL_2$, $\dim \overline \CH_{\mathbb C}(G, I_1)^{\rig}=3 q+2$  and $\dim \overline \CH_{\mathbb C}(G, I_n)^{ell}=2q+\frac{q+3}{2}$. Thus the elliptic/rigid cocenters for $PGL_2$ and $SL_2$ are different.

\end{document}